\documentclass[11pt]{amsart}
\usepackage{mathptmx}
\usepackage{amssymb}
 \usepackage{xcolor,soul,framed,caption}
 \colorlet{shadecolor}{yellow}

\usepackage{amsmath}
\usepackage{amssymb,mathtools}
\usepackage{stmaryrd}
\usepackage{hyperref}
\usepackage{graphicx}
\usepackage{color}
\usepackage{bbm}
\usepackage{epstopdf}

\def\XXint#1#2#3{{\setbox0=\hbox{$#1{#2#3}{\int}$} \vcenter{\hbox{$#2#3$}}\kern-.5\wd0}}

{   \end{list} }

\newcommand{\R}{\mathbb{R}}
\newcommand{\te}{\textrm}
\newcommand{\omo}{\Omega \!- \!\Omega}
\newcommand{\omok}{\Omega_{k}\!-\!\Omega_{k}}
\newcommand{\tacka}{\,\cdot\,}
\newcommand{\veps}{\varepsilon}

\newcommand{\lamw}{\lambda_{W}}
\newcommand{\lamv}{\lambda_{V}}

\DeclareMathOperator{\supp}{supp}

\DeclareMathOperator{\dist}{dist} \DeclareMathOperator{\dive}{div}
\DeclareMathOperator{\Hess}{Hess} \DeclareMathOperator{\proj}{proj}
\DeclareMathOperator{\Id}{Id} \DeclareMathOperator{\Conv}{Conv}
\DeclareMathOperator{\Diam}{diam}

\definecolor{mygreen}{rgb}{0.1,0.75,0.2}

\addtolength{\textwidth}{72pt}
\addtolength{\oddsidemargin}{-36pt}
\addtolength{\evensidemargin}{-36pt}
\addtolength{\textheight}{10pt}

\newcounter{broj}

\newtheorem{thm}{Theorem}[section]
\newtheorem{lem}[thm]{Lemma}

\newtheorem{prop}[thm]{Proposition}
\theoremstyle{definition}
\newtheorem{defi}[thm]{Definition}
\newtheorem{remk}[thm]{Remark}













\numberwithin{equation}{section}


\title{nonlocal-interaction equations on Uniformly Prox-regular Sets }
\author{Jos\'{e} A. Carrillo}
\address{Department of Mathematics, Imperial College London, SW7 2AZ London, United Kingdom}
\email{carrillo@imperial.ac.uk}

\author{Dejan Slep\v{c}ev}
\address{Department of Mathematical Sciences, Carnegie Mellon
University, Pittsburgh, PA 15213 USA}
\email{slepcev@math.cmu.edu}

\author{Lijiang Wu}
\address{Department of Mathematical Sciences, Carnegie Mellon
University, Pittsburgh, PA 15213 USA}
\email{lijiangw@andrew.cmu.edu}

\begin{document}

\begin{abstract}
We study the well-posedness of a class of nonlocal-interaction equations on general domains $\Omega\subset \mathbb{R}^{d}$, including nonconvex ones. We show that under mild assumptions on the regularity of domains (uniform prox-regularity), for  $\lambda$-geodesically convex  interaction and external potentials, the nonlocal-interaction equations
have unique weak measure solutions. Moreover, we show quantitative estimates on the stability of solutions which quantify the interplay of the geometry of the domain and the convexity of the energy.
We  use these results to investigate on which domains and for which potentials the solutions aggregate to a single point as time goes to infinity.
Our approach is based on the theory of gradient flows in spaces of probability measures.
\end{abstract}
\date{\today}

\maketitle

\section{Introduction}\label{sec:intro}

\subsection{Description of the problem.}
We study a continuum model of agents interacting via a potential $W$ and subject to an external potential $V$ confined to a closed subset $\Omega\subset\mathbb{R}^{d}$. 
Such systems arise in modeling macroscopic behavior of agents interacting in geometrically confined domains. The domain boundary may be an environmental obstacle, like a river, or the ground itself, as in the models of locust patterns discussed in \cite{BT,TopBerLogToo,TDEB}.
We consider systems in which the environmental boundaries limit the movement but not the interaction between agents. To illustrate, the the agents can still see each others over a river even if they are not able to traverse it.

We describe configurations of agents as measures supported on the given domain. This allows to study both the discrete case, when an individual agent carries a positive mass, and the continuum limit in which a system with many agents is described by a function giving the density of agents. 
The measure describing the agents interacting over time satisfies a nonlocal-interaction equation in the sense of weak measure solutions.
The theory of weak measure solutions to nonlocal-interaction equations was developed in \cite{AGS, CDFLS}.
In \cite{WS1} systems of interacting agents on domains with boundary were considered in
a setting which allowed for heterogeneous environments, but required the sets to be convex with $C^1$ boundary. Here we consider general domains which are not required to be convex and whose boundary may not be differentiable.
 The geometrical confinement introduces a constraint on the possible velocity fields 
of the agents at the boundary.
We consider the situation in which there is no additional friction at the boundary.
More precisely, for smooth domains, the velocity of the agents at the boundary is the projection of what the velocity would be, for the given configuration if there was no boundary, to the half plane of vectors pointing inside the domain. That is, inward pointing velocities at the boundary are unchanged, while the outward pointing velocities are projected on the tangent plane  to the boundary.
The measure $\mu(\tacka)$ describing the agent configuration becomes a distributional solution of the equation
\begin{equation}\label{conequ}
\left\{
\begin{aligned}
& \frac{\partial}{\partial
t}\mu(t,x)-\dive\left(\mu(t,x)P_x\left(-\int_{\Omega}\nabla
W(x-y)d\mu(t,y)-\nabla
V(x)\right)\right)=0,\\
& \mu(0)=\mu_{0},\\
\end{aligned}
\right.
\end{equation}
where $P_x$ is the projection of the velocities to inward pointing ones.

When considering domains which are not $C^1$ the question is what should the velocity of agents be 
at a boundary point where the domain is not differentiable.
 Similar questions have been encountered in studies of differential inclusions on moving domains (general sweeping processes), see  \cite{ET1,ET2,Ven} and references therein. We rely on notions developed there to properly define the cone of admissible directions at a boundary points and the proper way to project the velocity to the allowable cone. In particular we consider the equation \eqref{conequ} with projection $P_x$ defined in  \eqref{proj} ($P_x = P_{T(\Omega,x)}$). 

While one would like to consider very general domains there are limits to possible domains on which a well-posedness of measure solutions can be developed. Namely, if the 
domains have an inside corner, then it is not possible for the measure solutions of \eqref{conequ} to be stable, as we discuss in Remark \ref{rem:instab}. It turned out that a class of domains which is rather general and allows for a well-posedness theory are the (uniformly) prox-regular domains (see Definition \ref{def:proxset}).
Prox-regular domains  are  the sets which have an outside neighborhood 
such that for each of its points there exists a unique closest point on the boundary. 
In particular prox-regular domains can have outside corners and outside cusps, but not inside corners.


Our main result is the well-posedness of weak measure solutions, described in Definition \ref{def:wms}, of the nonlocal-interaction equation \eqref{conequ} on uniformly prox-regular domains. To show it we rely on further structure the equation possesses. Namely to the interaction $W$, we associate
interaction energy
$$\mathcal{W}(\mu)=\frac{1}{2}\int_{\Omega}\int_{\Omega}W(x-y)d\mu(x)d\mu(y),$$
the and to potential $V$, the  potential energy
$$\mathcal{V}(\mu)=\int_{\Omega}V(x)d\mu(x).$$ 
We define the energy $\mathcal{E}$ by
\begin{equation}\label{def:totaene}
\mathcal{E}(\mu)=\mathcal{W}(\mu)+\mathcal{V}(\mu).
\end{equation}
The energy $\mathcal{E}$ is a dissipated quantity of the evolution \eqref{conequ}, and furthermore the equation can be interpreted as the gradient flow of the energy with respect to the Wasserstein metric. 
Our strategy is to first show the existence of the gradient flow solutions to (\ref{conequ}) in the space of probability measures endowed with the Wasserstein metric. The gradient flow in the space of probability measures endowed with Wasserstein metric was first used in \cite{JKO} for Fokker-Planck equations.  The gradient-flow approach to well-posedness of nonlocal-interaction equations was developed in \cite{AGS, CDFLS} and extended to $C^1$ domains with boundary in \cite{WS1}. Furthermore the gradient flow approach was used to study  systems in which 
there are state constraints that determine the set of possible velocities, in particular in 
crowd motion models \cite{AleKimYao,MRS,MRSV} where the  constraint on the $L^{\infty}$-norm of the density of agents, which leads to an $L^{2}$-projection of velocity field.

After establishing the well-posedness of gradient-flow solutions we show the well-posedness of weak measure solutions. 

To show the existence of gradient flow solutions, we use particle approximations, that is we use a sequence of delta masses
$\mu^{n}_{0}=\sum_{j=1}^{k(n)} m_{j} \delta_{x^{n}_{j}}$ to approximate the initial data $\mu_0$ and solve (\ref{conequ}) with initial data $\mu^{n}_{0}$. Here the notion of gradient flow solutions (and weak measure solutions) provides the advantage that we can work with delta measures, which makes the particle approximation meaningful. With discrete initial data $\mu^{n}_0$, (\ref{conequ}) becomes a system of ordinary differential equations, we solve the ODE system and
prove that the solutions $\mu^{n}(\tacka)$ converges to some $\mu(\tacka)$ by establishing the stability property of solutions to (\ref{conequ}) with different initial data. We then show that the limit curve $\mu(\tacka)$ is a gradient flow solution to (\ref{conequ}) with initial data $\mu_0$ by proving that $\mu(\tacka)$ achieves the maximal dissipation of the associated energy,  and is thus the steepest descent of the energy.

The novelty here is that even though the domain $\Omega$ is only prox-regular (not necessarily convex or $C^{1}$) and the velocity field is discontinuous (due to the projection $P$), the ODE systems are still well-posed (refer to Theorem \ref{existode}) and the stability of solutions $\mu^{n}(\tacka)$ in Wasserstein metric $d_{W}$ is valid with explicit dependence on the prox-regularity constant (refer to Proposition \ref{contrprop}). Under semi-convexity assumptions on the potential functions $W$ and $V$, this enables us to show the well-posedness, that is existence and stability of weak measure solutions to (\ref{conequ}) in three different cases: $\Omega$ bounded and prox-regular (Theorem \ref{exgrad} and Thorem \ref{stagrad}), $\Omega$ unbounded and convex (Theorem \ref{globexis}), and $\Omega$ unbounded and prox-regular with compactly supported initial data $\mu_0$ (Theorem \ref{locexistence}). We can also generalize the well-posedness results to time-dependent interaction and external potentials
$W=W(t,x), V=V(t,x)$ (Remark \ref{timedep}). We also give sufficient conditions on the shape of $\Omega$ to ensure the existence of an interaction potentials $W$ such that 
solutions $\mu(\tacka)$ to (\ref{conequ}) aggregate to a single delta mass as time goes to infinity (Theorem \ref{aggrecon} and Remark \ref{sharpness}) .

\subsection{Description of weak measure solutions}
Let $\mathcal{P}(\Omega)$ be the  space of probability
measures on $\Omega$ and let
$$
\mathcal{P}_{2}(\Omega)=\left \{\mu\in \mathcal{P}(\Omega):
\int_{\Omega}|x|^{2}d\mu(x)< \infty\right\}\,,
$$
the space of probability measures with finite second moment. $\mathcal{P}_{2}(\Omega)$
is a complete metric space endowed with the 2-Wasserstein metric
\begin{equation}\label{def:Wassmetric}
d_{W}^{2}(\nu,\mu)= \min\left\{\int_{\Omega\times \Omega}
|x-y|^{2}d\gamma(x,y): \gamma\in \Gamma(\mu,\nu)\right\}\,,
\end{equation}
where $\Gamma(\mu,\nu)$ is the set of transportation plans between $\nu$ and $\mu$, that is the set of joint probability distributions on $\Omega \times \Omega$ with first marginal $\mu$
and second marginal $\nu$:
$$
\Gamma(\mu,\nu)=\left\{\gamma\in\mathcal{P}(\Omega\times\Omega):
(\pi_{1})_{\sharp}\gamma=\mu,(\pi_{2})_{\sharp}\gamma=\nu\right\}.
$$
We refer to books \cite{V1,V2} for theory of optimal transport.
We denote the set of optimal transport plans between $\mu$ and $\nu$ (with respect to 2-Wasserstein metric) by
$\Gamma_{o}(\mu,\nu)$, that is
\begin{equation}\label{def:Optplan}
\Gamma_{o}(\mu,\nu)=\left\{\gamma\in\Gamma(\mu,\nu):\int_{\Omega\times\Omega}|x-y|^{2}d\gamma(x,y)=d_{W}^{2}(\mu,\nu)\right\}.
\end{equation}

We now give the definition of weak measure solutions to the continuity equation.
\begin{defi} \label{def:wms}
A locally absolutely continuous curve $\mu(\tacka)\in
\mathcal{P}_{2}(\Omega)$ is a \emph{weak measure solution} to
(\ref{conequ}) with initial value $\mu_{0}$ if
$$
v(t,x)=-P_{x} \left( \int_{\Omega}\nabla W(x-y)d\mu(y)+\nabla V(x)\right)\in
L^{1}_{loc}([0,+\infty);L^{2}(\mu(t)))$$ and
\begin{align*}
\int_{0}^{\infty}\!\!\!\!\int_{\Omega} &\frac{\partial \phi}{\partial
t}(t,x)d\mu(t,x)dt+\int_{\Omega}\phi(0,x)d\mu_{0}(x)
+ \int_{0}^{\infty}\!\!\!\!\int_{\Omega}\left\langle\nabla \phi(t,x),v(t,x)\right\rangle d\mu(x)=0\,,
\end{align*}
for all $\phi\in C_{c}^{\infty}([0,\infty)\times \Omega)$. The projection $P_x$ is described below and
formally defined in \eqref{proj} (with $P_x = P_{T(\Omega,x)}$).
\end{defi}
Note that the test function $\phi$ does not have to be zero on the
boundary of $\Omega$, and thus the no-flux boundary condition is
imposed in a weak form.

We now define the projection $P_x$. When $\partial\Omega\in C^1$ is smooth
and oriented, the definition of $P_x$ is given in \cite{WS1, WS2}, and it is  given by
$P_x (v)=v-\langle v,\nu(x)\rangle\nu(x)$ if $\langle v,\nu(x)\rangle>0$ and $P_x (v)=v$ otherwise, where $\nu(x)$ is the unit outward normal vector to the boundary at $x\in\partial \Omega$.
When $\Omega$ is only prox-regular, to define $P_x$, we need to recall some notations
from non-smooth analysis, see \cite{Bou,CSW}, in order to replace the normal vector field and the inward and outward directions.
\begin{defi}
Let $S$ be a closed subset of $\mathbb{R}^{d}$. We define the
\emph{proximal normal cone} to $S$ at $x$ by,
\begin{equation*}
N^{P}(S,x)=\left\{v\in\mathbb{R}^{d}\, :\, \exists \alpha>0,\; x\in
P_{S}\left(x+\alpha v\right)\right\},
\end{equation*}
where
\begin{equation*}
P_{S}(y)=\left\{z\in S \, : \,\inf_{w\in S}|w-y|=|z-y|\right\}
\end{equation*}
is the projection of $y$ onto $S$.
\end{defi}
Note that for $x\in S\setminus\partial S, N^{P}(S,x)=\{0\}$ and by
convention for $x\not\in S, N^{P}(S,x)=\emptyset$. The notion of
normal cone extends the concept of outer normal of a smooth set in
the sense that if $S$ is a closed subset of $\mathbb{R}^{d}$ with
boundary $\partial S$ an oriented $C^2$ hypersurface, then for each
$x\in \partial S$, $N^{P}(S,x)=\mathbb{R}^{+}\nu(x)$ where $\nu(x)$
is the unit outward normal to $S$ at $x$. We now recall the notion of
uniform prox-regular sets.
\begin{defi}\label{def:proxset}
Let $S$ be a closed subset of $\mathbb{R}^{d}$. $S$ is said to be
\emph{$\eta$-prox-regular} if for all $x\in\partial S$ and $v\in
N^{P}(S,x), |v|=1$ we have
\begin{equation*}
B_{\eta}(x+\eta v)\cap S=\emptyset,
\end{equation*}
where $B_{\eta}(y)$ denotes the open ball centered at $y$ with radius
$\eta>0$.
\end{defi}
Note that an equivalent characterization, see \cite{CSW,PolRocThi}, is given by: $S$ is
$\eta$-prox-regular if for any $y\in S,x\in\partial S$ and $v\in
N^{P}(S,x)$,
\begin{equation}\label{proxequiv}
\left\langle v,y-x\right\rangle\leq \frac{|v|}{2\eta}|y-x|^{2}.
\end{equation}
Observe that if $S$ is closed and convex, then $S$ is
$\infty$-prox-regular. We now turn to the tangent cones.
\begin{defi} \label{def:Ccones}
Let $S$ be a closed subset of $\mathbb{R}^{d}$ and $x\in S$, define
the \emph{Clarke tangent cone} by
\begin{equation*}
T^{C}(S,x)=\left\{v\in\mathbb{R}^{d}\,:\, \forall t_{n}\searrow 0,\forall
 x_{n} \in S, \te{ s.t. }  x_n \to x, \; \exists v_{n}\to v \te{ s.t. } (\forall n) \, x_{n}+t_{n}v_{n}\in
S \,\right\}\,,
\end{equation*}
and denote the \emph{Clarke normal cone} by
\begin{equation*}
N^{C}(S,x)=\left\{\xi\in \mathbb{R}^{n}: \langle \xi,v\rangle \leq 0
\,\forall v\in T^{C}(S,x)\right\}.
\end{equation*}
\end{defi}

\begin{figure}[h]
\centering
\includegraphics[width=0.45\linewidth,angle=270]{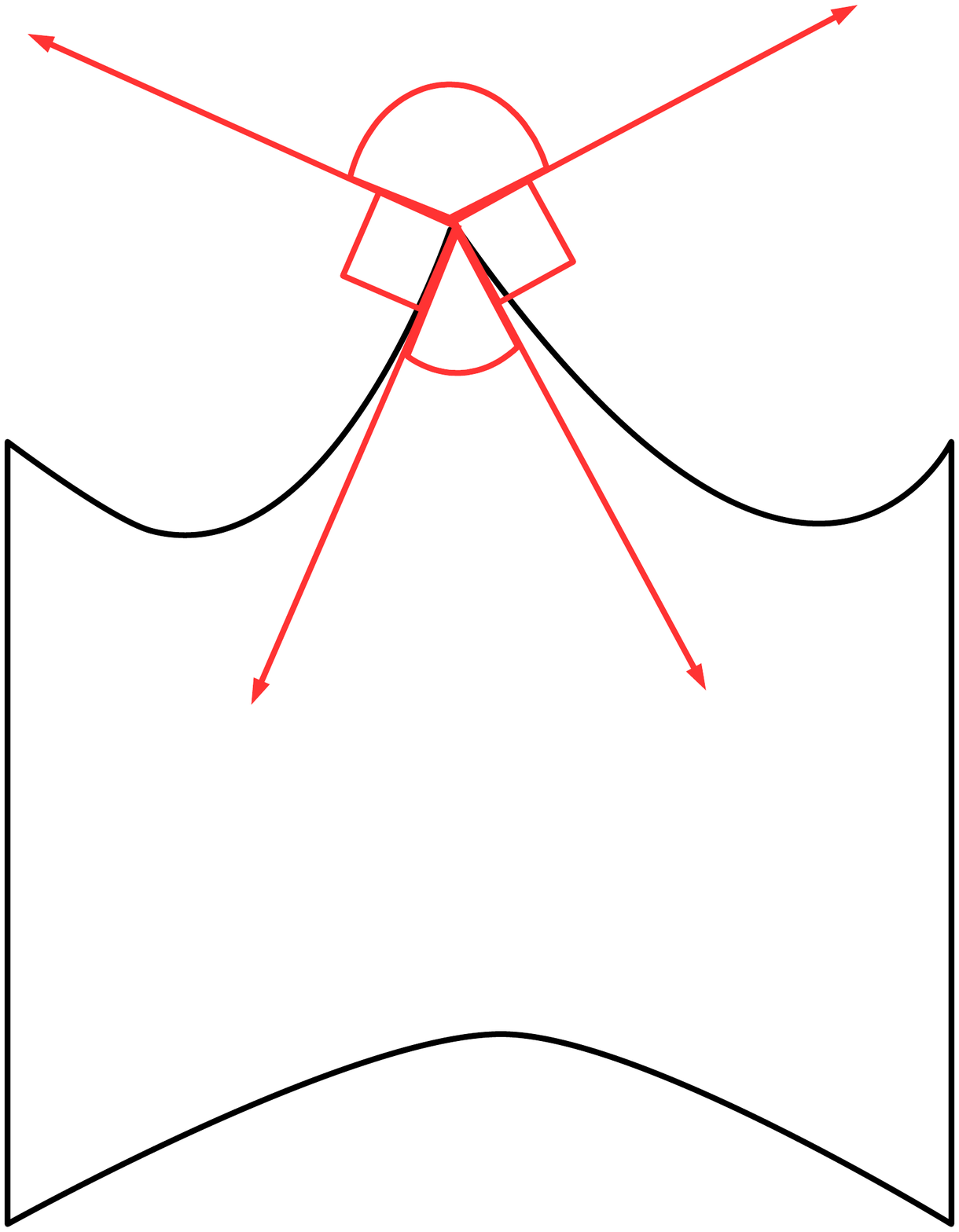}
\put(-200,-50){\Large $S$}
\put(-140,-105){$T(S,x)$}
\put(-55,-105){$N(S,x)$}
\put(-85,-130){$x$}

\caption{The set $S$ is prox-regular but not convex. At the corner point $x\in\partial S$, the tangent and normal cones are denoted by $T(S,x)$ and $N(S,x)$.}
\label{fig:cones}
\end{figure}

Note that $T^{C}(S,x), N^{C}(S,x)$ are closed convex cones, also by
convention $N^{C}(S,x)=\emptyset$ for all $x\not\in S$. In general,
we only have $N^{P}(S,x)\subset N^{C}(S,x)$ and the inclusion can be
strict. However, for $\eta$-prox-regular set $S$, we have
$N^{P}(S,x)=N^{C}(S,x)$, see \cite{CSW,PolRocThi}. In that case, we put
the normal cone and tangent cone as $N(S,x)$ and $T(S,x)$
respectively, and for any vector $w\in\mathbb{R}^{d}$, we define the
projection onto the tangent cone by $P_{T(S,x)}(w)$, i.e.,
\begin{equation} \label{proj}
P_{T(S,x)}\left(w\right)=\left\{v\in T(S,x): |v-w|=\inf_{\xi\in
T(S,x)}|\xi-w|\right\}.
\end{equation}
Since $T(S,x)$ is a closed convex cone, the infimum is
always attained, and $P_{T(S,x)}$ is well-defined. For notation
simplicity, since the set we are considering $\Omega$ is not
changing, we write $P_{x}$ instead of $P_{T(\Omega,x)}$ and
when the context is clear, we put $P$ for $P_{x}$. With these preliminaries, we can now state the main results of this work.

\subsection{Main results.}
For any set $A \subset \mathbb{R}^{d}$, we denote by
$A \! - \! A = \{x-y\,: x,y\in A\},$ and
the convex hull of $A$ by
$\Conv\left(A \right)=\{\theta x+(1-\theta)y: x,y\in A, 0\leq \theta\leq
1\}.$
For a function $f\in C^{1}(\mathbb{R}^{d})$, we say that $f$ is
$\lambda$-geodesically convex on a convex set $S$ if for any $x,y\in S$
we have
\begin{equation*}
f(y)\geq f(x)+\langle \nabla f(x),y-x\rangle
+\frac{\lambda}{2}|y-x|^{2}.
\end{equation*}
We call $f$ \emph{locally $\lambda$-geodesically convex} if there exist a
sequence of compact convex sets $K_{n}\subset\mathbb{R}^{d}$ and a
sequence of constants $\lambda_{n}$ such that $K_{n}\subset
K_{n+1}$, $\bigcup_{n} K_{n}=\mathbb{R}^{d}$ and $f$ is
$\lambda_{n}$-geodesically convex on $K_{n}$. Note that $f$ is
$\lambda$-geodesically convex on a convex set $S$ implies for any
$x,y\in S$
\begin{equation*}
\langle \nabla f(x)-\nabla f(y),x-y\rangle \geq \lambda|x-y|^{2}.
\end{equation*}

The main assumptions depend on the domain $\Omega$ and the support of initial data. In fact, we separate our results in three cases: 
$\Omega$ bounded, $\Omega$ unbounded and convex, and $\Omega$ unbounded with compactly supported initial data. The assumptions are very similar 
in nature based on the convexity of the potentials $V$ and $W$ and on their growth behavior at $\infty$ in the unbounded cases. 
We assume that both potentials $V$ and $W$ are $\lambda_V$- and $\lambda_W$-convex respectively, possibly locally convex. 
Finally, in case $V$ and $W$ are $\lambda$-locally convex, we can assume, without loss of generality, that $V$ and $W$ share the same sequence of 
compact convex sets, $K_{k}$ in the definition of locally $\lambda$-geodesic convexity, i.e., $K_{k}\subset K_{k+1}$, $\bigcup_{k\in\mathbb{N}}K_{k}=\mathbb{R}^{d}$ with $V$ 
and $W$ being $\lambda_{V,k}$ and $\lambda_{W,k}$-geodesically convex on $K_{k}$.

In case $\Omega$ is bounded, we assume that
\begin{itemize}
\item[\textbf{(M1)}] $\Omega\subset\mathbb{R}^{d}$ is $\eta$-prox-regular with $\eta>0$.
\item[\textbf{(A1)}] $W\in C^{1}(\mathbb{R}^{d})$ is $\lambda_{W}$-geodesically convex on
$\Conv\left(\omo\right)$ for some $\lambda_{W}\in\mathbb{R}$.
\item[\textbf{(A2)}] $V\in C^{1}(\mathbb{R}^{d})$ is
$\lambda_{V}$-geodesically convex on $\Conv\left(\Omega\right)$ for some $\lambda_{V}\in\mathbb{R}$.
\end{itemize}

We call a locally absolutely continuous curve
$\mu(t)\in\mathcal{P}_{2}(\Omega)$ a gradient flow of
the energy functional $\mathcal{E}$ defined in \eqref{def:totaene} if for a.e $t>0$
\begin{equation*}
v(t)\in -\partial\mathcal{E}(\mu(t)),
\end{equation*}
where $\partial\mathcal{E}(\mu(t))$ is the subdifferential of
$\mathcal{E}$ at $\mu(t)$ (as given in Definition \ref{subdif}) and $v(t)$ is the tangent velocity of the
curve $[0,\infty)\ni t\mapsto \mu(t)\in \mathcal{P}_{2}(\Omega)$ at
$\mu(t)$, which we recall in Section \ref{sec:bddomain}.

For a locally absolutely continuous curve $[0,T]\ni t \mapsto
\mu(t)\in \mathcal{P}_{2}(\Omega)$ with respect to 2-Wasserstein metric
$d_{W}$, we denote its metric derivative by
\begin{equation}\label{def:metrderi}
|\mu'|(t)=\limsup_{s\to t}\frac{d_{W}(\mu(t),\mu(s))}{|s-t|}.
\end{equation}

The main results of this paper is the well-posedness of weak measure solutions: existence and stability, with arbitrary initial data.
We establish it using an approximation scheme and the theory of gradient flows in spaces of probability measures.
\begin{thm}\label{exgrad}
Assume $\Omega$ is bounded and satisfies (M1) and $W, V$ satisfy
(A1), (A2). Then there exists a locally absolutely continuous curve
$\mu(\tacka)\in \mathcal{P}_{2}(\Omega)$ such that $\mu(\tacka)$ is a gradient
flow with respect to $\mathcal{E}$. Moreover, $\mu(\tacka)$ is a weak
measure solution to \eqref{conequ}.

 Furthermore for a.e.  $t>0$
\begin{equation}\label{metslo}
|\mu'|^{2}(t)=\int_{\Omega} \left|P_{x}\left(-\nabla
 W*\mu(r)(x)-\nabla V(x)\right)\right|^{2} d\mu(t,x),
\end{equation}
and for any $0\leq s\leq t<\infty$
\begin{equation}\label{enedis}
\mathcal{E}(\mu(s))=
\mathcal{E}(\mu(t))+\int_{s}^{t}\int_{\Omega}\left|P_{x}\left(-\nabla
 W*\mu(r)(x)-\nabla V(x)\right)\right|^{2}d\mu(r,x)dr.
\end{equation}
\end{thm}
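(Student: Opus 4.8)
The plan is to construct the solution by a particle approximation, pass to the limit using the quantitative stability of Proposition~\ref{contrprop}, and identify the limit curve as a curve of maximal slope for $\mathcal{E}$. First I would choose atomic measures $\mu_{0}^{n}=\sum_{j=1}^{k(n)}m_{j}^{n}\delta_{x_{j}^{n}}$ with $\supp\mu_{0}^{n}\subset\Omega$ and $d_{W}(\mu_{0}^{n},\mu_{0})\to 0$. By Theorem~\ref{existode} the associated ODE system has a solution staying in $\Omega^{k(n)}$, and $\mu^{n}(t):=\sum_{j}m_{j}^{n}\delta_{x_{j}^{n}(t)}$ is a weak measure solution of \eqref{conequ} with datum $\mu_{0}^{n}$ and velocity $v^{n}(t,x)=P_{x}(-\nabla W*\mu^{n}(t)(x)-\nabla V(x))$. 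Since $T(\Omega,x)$ is a closed convex cone, $\langle w,P_{x}(w)\rangle=|P_{x}(w)|^{2}$ and $|P_{x}(w)|\le|w|$; applying the first identity along the flow gives the exact dissipation identity
\begin{equation}\label{eq:sketchdiss}
\mathcal{E}(\mu^{n}(s))=\mathcal{E}(\mu^{n}(t))+\int_{s}^{t}\!\!\int_{\Omega}|v^{n}(r,x)|^{2}\,d\mu^{n}(r,x)\,dr,\qquad 0\le s\le t.
\end{equation}
The inequality $|P_{x}(w)|\le|w|$, together with the boundedness of $\Omega$ and the continuity of $\nabla W,\nabla V$, gives $|v^{n}|\le M$ uniformly in $n$, so each $\mu^{n}(\cdot)$ is $M$-Lipschitz in $d_{W}$.

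Next, by Proposition~\ref{contrprop} the distance $d_{W}(\mu^{n}(t),\mu^{m}(t))$ is controlled by $d_{W}(\mu_{0}^{n},\mu_{0}^{m})$ with a constant depending only on $\eta$, $\lamw$, $\lamv$ and $\Diam\Omega$ on bounded time intervals, so $\{\mu^{n}(\cdot)\}$ is Cauchy in $C([0,T];\mathcal{P}_{2}(\Omega))$ for every $T$ and converges locally uniformly to an $M$-Lipschitz (hence locally absolutely continuous) curve $\mu(\cdot)$ with $\mu(0)=\mu_{0}$. Since $\Omega$ is compact and $W,V$ are continuous, $\mathcal{E}$ is continuous for narrow convergence, so $\mathcal{E}(\mu^{n}(t))\to\mathcal{E}(\mu(t))$, and by equicontinuity of $\{\nabla W(x-\cdot):x\in\Omega\}$ one has $\nabla W*\mu^{n}(r)(x)\to\nabla W*\mu(r)(x)$ uniformly in $(r,x)$.

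To identify the limit, set $g(\nu):=\big(\int_{\Omega}|P_{x}(-\nabla W*\nu(x)-\nabla V(x))|^{2}\,d\nu(x)\big)^{1/2}$, so that $\int_{\Omega}|v^{n}(r,\cdot)|^{2}\,d\mu^{n}(r)=g(\mu^{n}(r))^{2}\ge|{\mu^{n}}'|^{2}(r)$. Passing to the limit in \eqref{eq:sketchdiss}, using the lower semicontinuity of the metric-derivative action and of $g$ along narrow convergence (see below), one obtains $\mathcal{E}(\mu(s))-\mathcal{E}(\mu(t))\ge\frac{1}{2}\int_{s}^{t}g(\mu(r))^{2}\,dr+\frac{1}{2}\int_{s}^{t}|\mu'|^{2}(r)\,dr$. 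For the reverse inequality I would use that the tangent velocity $v(r,\cdot)$ of the limit curve (recalled in Section~\ref{sec:bddomain}) satisfies $\|v(r)\|_{L^{2}(\mu(r))}=|\mu'|(r)$ and $v(r,x)\in T(\Omega,x)$ for $\mu(r)$-a.e.\ $x$; the chain rule then gives $\frac{d}{dr}\mathcal{E}(\mu(r))=\int_{\Omega}\langle\nabla W*\mu(r)(x)+\nabla V(x),v(r,x)\rangle\,d\mu(r,x)$, and writing $-(\nabla W*\mu(r)(x)+\nabla V(x))=P_{x}(-\nabla W*\mu(r)(x)-\nabla V(x))+n(x)$ with $n(x)\in N(\Omega,x)$, the inequalities $\langle n(x),v(r,x)\rangle\le 0$ and Cauchy--Schwarz yield $\mathcal{E}(\mu(s))-\mathcal{E}(\mu(t))\le\int_{s}^{t}g(\mu(r))|\mu'|(r)\,dr\le\frac{1}{2}\int_{s}^{t}g(\mu(r))^{2}\,dr+\frac{1}{2}\int_{s}^{t}|\mu'|^{2}(r)\,dr$. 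The two inequalities force equality throughout: $|\mu'|(r)=g(\mu(r))$ for a.e.\ $r$, and from the equality cases $v(r,x)=P_{x}(-\nabla W*\mu(r)(x)-\nabla V(x))$ for $\mu(r)$-a.e.\ $x$. Hence $\mu(\cdot)$ is a weak measure solution of \eqref{conequ}, \eqref{metslo} holds, and \eqref{enedis} follows by integrating $\mathcal{E}(\mu(s))-\mathcal{E}(\mu(t))=\int_{s}^{t}g(\mu(r))^{2}\,dr$; that $v(r)\in-\partial\mathcal{E}(\mu(r))$, i.e.\ that $\mu(\cdot)$ is a gradient flow, then follows from the characterization of $\partial\mathcal{E}$ near Definition~\ref{subdif}.

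I expect the main obstacle to be the lower semicontinuity of $g$ along narrow convergence used above, which is also where the discontinuity of the projection $P_{x}$ and the geometry of $\Omega$ enter. Since $\nabla W*\mu^{n}(r)\to\nabla W*\mu(r)$ uniformly and $w\mapsto P_{x}(w)$ is $1$-Lipschitz, this reduces to the lower semicontinuity of $x\mapsto|P_{T(\Omega,x)}(w(x))|$ for continuous $w$. For that I would use two facts about $\eta$-prox-regular sets, both consequences of \eqref{proxequiv}: first, the map $x\mapsto T(\Omega,x)$ is inner semicontinuous (equivalently, the graph of $x\mapsto N(\Omega,x)$ is closed), so that a minimizing direction $v=P_{T(\Omega,x)}(w(x))\in T(\Omega,x)$ is approximated by $v_{n}\in T(\Omega,x_{n})$ whenever $x_{n}\to x$; second, the Pythagorean identity $|w|^{2}=|P_{T}(w)|^{2}+|w-P_{T}(w)|^{2}$ for projections onto closed convex cones, which turns the resulting upper bound on $|w(x_{n})-P_{T(\Omega,x_{n})}(w(x_{n}))|$ into the lower bound $\liminf_{n}|P_{T(\Omega,x_{n})}(w(x_{n}))|\ge|P_{T(\Omega,x)}(w(x))|$. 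The other place where prox-regularity is essential is the constraint $v(r,\cdot)\in T(\Omega,\cdot)$ on tangent velocities of absolutely continuous curves in $\mathcal{P}_{2}(\Omega)$, again via \eqref{proxequiv}.
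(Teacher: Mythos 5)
Your architecture is the same as the paper's: particle approximation via Theorem \ref{existode}, the Cauchy property from Proposition \ref{contrprop}, lower semicontinuity to pass the discrete dissipation inequality to the limit (your route to the lower semicontinuity of $g$ -- uniform convergence of $\nabla W*\mu^n$ plus $1$-Lipschitzness of $P_x$ in $w$, then inner semicontinuity of $x\mapsto T(\Omega,x)$ and the Pythagorean identity -- is a legitimate variant of the paper's Proposition \ref{lscproj} and Lemma \ref{lscslope}, and works on a bounded domain), and finally a chain-rule upper bound whose equality case identifies $\tilde v=P_x(v)$ and yields \eqref{metslo} and \eqref{enedis}. The issue is that the two steps you invoke in the reverse inequality are precisely the technical core of the paper's proof, and one of them is asserted in a form you do not justify.

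Concretely, you claim that the tangent velocity of the limit curve satisfies $v(r,x)\in T(\Omega,x)$ for $\mu(r)$-a.e.\ $x$, and use $\langle n(x),v(r,x)\rangle\le 0$ pointwise. This is not among the facts recalled in Section \ref{sec:bddomain} (what is recalled is only the existence of $\tilde v$ with $\|\tilde v(t)\|_{L^2(\mu(t))}=|\mu'|(t)$), and it does not follow from \eqref{proxequiv} alone without an argument: the tangent velocity is only a weak $L^2(\mu(t))$ limit of the barycentric difference quotients $\int_\Omega\frac{y-\cdot}{h}\,d\nu^h_\cdot(y)$ of optimal plans, and weak limits do not preserve pointwise cone constraints. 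The paper avoids the pointwise statement entirely: it first establishes the weak $L^2(\mu(t))$ convergence of these barycentric projections to $\tilde v(t)$, then applies \eqref{proxequiv} under the integral against $\gamma^h_t$ to get
$\int\langle v_N(t,x),\frac{y-x}{h}\rangle\,d\gamma^h_t\le\frac{\|v(t)\|_{L^\infty}}{2\eta}\,\frac{d_W^2(\mu(t),\mu(t+h))}{h}\to 0$,
which yields only the integrated inequality $\int_\Omega\langle v_N(t,x),\tilde v(t,x)\rangle\,d\mu(t,x)\le 0$ -- and that is all the equality-case argument needs. Your pointwise claim can probably be salvaged (e.g.\ via Mazur's lemma applied to the difference quotients), but as written it is a gap, and the weaker integrated statement is both easier and sufficient. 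Similarly, the chain rule $\frac{d}{dr}\mathcal{E}(\mu(r))=\int\langle\nabla W*\mu(r)+\nabla V,\tilde v(r)\rangle\,d\mu(r)$ is not free: the paper proves it by first showing $t\mapsto\mathcal{E}(\mu(t))$ is locally absolutely continuous, then exploiting the $\lambda$-geodesic convexity of $W,V$ (the monotone function $f$ in \eqref{fnonincrease}) to get the subdifferential inequality, and finally using the convergence of difference quotients to compute the one-sided derivatives. You should either supply these arguments or at least reduce your claims to the integrated forms that the limiting procedure actually delivers.
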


\begin{thm}\label{stagrad}
Assume $\Omega$ is bounded and satisfies (M1) and $W, V$ satisfy
(A1), (A2). Let $\mu^{1}(\tacka),\mu^{2}(\tacka)$ be two weak measure
solutions to \eqref{conequ} with initial data
$\mu^{1}_{0},\mu^{2}_{0}$ respectively. Then
\begin{equation}\label{contra3}
d_{W}\left(\mu^{1}(t),\mu^{2}(t)\right)\leq
\exp\left(\left(-\lamw^{-}-\lamv +\frac{\|\nabla
W\|_{L^{\infty}(\omo)}+\|\nabla
V\|_{L^{\infty}(\Omega)}}{\eta}\right)t\right)
d_{W}\left(\mu^{1}_{0},\mu^{2}_{0}\right).
\end{equation}
for any $t\geq 0$ where $\lamw^- = \min \{\lamw,0\}$. Moreover, the weak measure solution is
characterized by the system of Evolution Variational Inequalities:
\begin{equation} \label{EVI3}
\frac{1}{2}\frac{d}{dt}d_{W}^{2}\left(\mu(t),\nu\right)+
\left(\frac{\lamw^-}{2}+\frac{\lamv}{2}-\frac{\|\nabla W\|_{L^{\infty}(\omo)}+\|\nabla
V\|_{L^{\infty}(\Omega)}}{2\eta}\right)d_{W}^{2}\left(\mu(t),\nu\right)\leq
\mathcal{E}(\nu)-\mathcal{E}(\mu(t)),
\end{equation}
for a.e. $t>0$ and for all $\nu\in\mathcal{P}_{2}(\Omega)$.
\end{thm}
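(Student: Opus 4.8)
The plan is to establish Theorem \ref{stagrad} by first proving the sharper, differential statement --- the Evolution Variational Inequality \eqref{EVI3} --- and then deriving the contraction estimate \eqref{contra3} from it by a standard Gr\"onwall argument. The key technical input is the quantitative stability of ODE-particle solutions, \emph{Proposition \ref{contrprop}}, together with the identification of weak measure solutions as gradient-flow solutions from Theorem \ref{exgrad}. I would organize the argument as follows.

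\textbf{Step 1: A differential inequality for two particle solutions.} Take two atomic initial data and the corresponding solutions $\mu^1, \mu^2$ of the ODE system \eqref{conequ}. For a.e.\ $t$, differentiate $\frac12 d_W^2(\mu^1(t),\mu^2(t))$ along an optimal plan; because the velocity fields are the projections $P_x(-\nabla W*\mu^i - \nabla V)$, the computation splits into (i) the contribution of the unprojected velocity field, which is controlled by the $\lambda_W$- and $\lambda_V$-geodesic convexity of $W$ on $\Conv(\omo)$ and of $V$ on $\Conv(\Omega)$, yielding a term $-(\lambda_W^- + \lambda_V)\, d_W^2$, and (ii) the error introduced by the projection $P_x$ onto the tangent cone $T(\Omega,x)$, which is precisely where $\eta$-prox-regularity enters. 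Using the equivalent characterization \eqref{proxequiv} of $\eta$-prox-regularity one bounds the projection error by $\frac{1}{\eta}\big(\|\nabla W\|_{L^\infty(\omo)} + \|\nabla V\|_{L^\infty(\Omega)}\big)\, d_W^2$. This produces, for the particle approximations, both the EVI-type inequality and the exponential estimate with the stated constant. This is essentially a restatement/consequence of Proposition \ref{contrprop}.

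\textbf{Step 2: Pass to general initial data and to weak measure solutions.} Approximate arbitrary $\mu^1_0, \mu^2_0 \in \mathcal{P}_2(\Omega)$ by atomic measures in $d_W$, apply Step 1 to the particle solutions, and pass to the limit using the stability (lower semicontinuity of $d_W$, and the convergence established in the proof of Theorem \ref{exgrad}) to obtain \eqref{contra3} for the limiting gradient-flow solutions. Since Theorem \ref{exgrad} asserts that these gradient-flow solutions are weak measure solutions, and since \eqref{contra3} in particular forces \emph{uniqueness} of weak measure solutions with given initial data (any two weak measure solutions with the same data coincide, once one knows every weak measure solution is the gradient-flow solution --- which follows from the EVI characterization, or can be shown directly from Definition \ref{def:wms} by the same differentiation of $d_W^2$ against the gradient-flow solution), the contraction estimate holds for arbitrary pairs of weak measure solutions. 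For the EVI \eqref{EVI3}, repeat the $\frac12 \frac{d}{dt} d_W^2(\mu(t),\nu)$ computation with a \emph{fixed} comparison measure $\nu$ in place of the second solution: the convexity terms now give $\mathcal{E}(\nu) - \mathcal{E}(\mu(t))$ on the right-hand side via the above-the-tangent inequality for $\lambda$-geodesically convex energies, while the projection error again contributes the $\frac{1}{2\eta}(\|\nabla W\|_{L^\infty} + \|\nabla V\|_{L^\infty})$ term; this yields \eqref{EVI3}, and conversely the EVI is well known to characterize the curve uniquely (integrate in $\nu = \mu^2(t)$ and use Gr\"onwall).

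\textbf{Main obstacle.} The delicate point is Step 1, specifically controlling the defect caused by the discontinuous projection $P_x$ onto tangent cones at non-smooth boundary points: one must show that along an optimal coupling, $\langle P_x w^1 - P_y w^2,\ x - y\rangle$ differs from $\langle w^1 - w^2,\ x-y\rangle$ by at most $\frac{1}{\eta}(\|w^1\|_\infty + \|w^2\|_\infty)|x-y|^2$, where $w^i$ are the unprojected velocities. This is exactly the place where $\eta$-prox-regularity is used in its quantitative form \eqref{proxequiv}: the normal components that get truncated lie in $N(\Omega,x) = N^C(\Omega,x) = N^P(\Omega,x)$, and \eqref{proxequiv} bounds their inner product with $y - x$ (and symmetrically) by $\frac{|v|}{2\eta}|x-y|^2$. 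Making this estimate rigorous --- including the measurability of the optimal-plan selection and the justification of differentiating $d_W^2$ under the integral sign --- is the crux; everything else is the bookkeeping of Gr\"onwall's lemma and a limiting argument that are standard in the gradient-flow theory of \cite{AGS, CDFLS, WS1}.
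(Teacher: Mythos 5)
Your core estimate is exactly the paper's: the splitting of $\langle P_{x}(v^{1})-P_{y}(v^{2}),x-y\rangle$ along an optimal plan into the unprojected convexity part (giving $-\lamw^{-}-\lamv$) and the projection defect controlled by the quantitative prox-regularity inequality \eqref{proxequiv} (giving the $\eta^{-1}$ term), followed by Gronwall. The organization differs, however, and your detour through particle approximations is superfluous for this particular theorem. Theorem \ref{stagrad} concerns \emph{arbitrary} weak measure solutions, so a contraction obtained by passing to the limit from atomic data only covers the constructed solutions; to upgrade it you must prove uniqueness, and your own fallback for that (``differentiate $d_{W}^{2}$ against the gradient-flow solution directly from Definition \ref{def:wms}'') is precisely the paper's entire proof of \eqref{contra3}: it applies Theorem 8.4.7 and Lemma 4.3.4 of \cite{AGS} to differentiate $t\mapsto d_{W}^{2}(\mu^{1}(t),\mu^{2}(t))$ for two given weak measure solutions, performs the convexity/prox-regularity splitting there, and concludes by Gronwall --- no limit passage and no prior EVI needed. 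The EVI \eqref{EVI3} is then proved separately, exactly as in your last step (fixed $\nu$, above-the-tangent inequality for the energy with the quantitative modulus $\frac{\lamw^{-}}{2}+\frac{\lamv}{2}$, plus the $\frac{1}{2\eta}$ defect), and the converse direction (EVI implies the curve is the unique solution) is the doubling-of-variables Lemma 4.3.4 of \cite{AGS}; that lemma is what your phrase ``integrate in $\nu=\mu^{2}(t)$'' must be made precise as, since one cannot literally insert a time-dependent comparison measure into \eqref{EVI3}. In short: your proposal is correct and rests on the same key mechanism, but the paper's order of proof --- direct differentiation for weak measure solutions first, EVI second --- is cleaner and is in fact the step your argument bottoms out on anyway.
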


Observe that in the stability estimate for solutions \eqref{contra3}, we find two contributions due to the $\lambda$-convexity of the potentials and the $\eta$-prox-regular property of the domain $\Omega$ respectively.

On $\R^n$ when $\mu^1(0)$ and $\mu^2(0)$ have the same center of mass $\lambda_W^-$ can be replaced by $\lambda_W$ in \eqref{contra3}. Thus when the potential $W$ is uniformly geodesically convex, $\lambda_W>0$ and thus there is exponential contraction of solutions. On bounded domains this is not the case since interaction with boundary can change the center of mass of a solution. Nevertheless part of the claim can be recovered. We consider the case that $V \equiv 0$. Denote  the set of singletons by $\Xi=\{\delta_{x}:x\in\mathbb{R}^d\}$. Note that we included the singletons which are not in the set $\Omega$, since the center of mass for measures on a non-convex $\Omega$ may lie outside the set.

\begin{prop} \label{single_contr}
Assume $\Omega$ is bounded and satisfies (M1) and $W$ satisfies
(A1). Let $\mu(\tacka)$ be a weak measure
solutions to \eqref{conequ} with $V\equiv 0$. Then
\begin{equation}\label{contra3}
d_{W}\left(\mu(t),\Xi\right)\leq
\exp\left(\left(-\lamw+\frac{\|\nabla
W\|_{L^{\infty}(\omo)}}{\eta}\right)t\right)
d_{W}\left(\mu_{0},\Xi \right).
\end{equation}
for any $t\geq 0$.
\end{prop}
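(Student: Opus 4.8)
The plan is to recognize $d_{W}(\mu(t),\Xi)$ as a variance and then run a Gr\"onwall argument on it, essentially the one behind Theorem~\ref{stagrad} but with the coupling $\mu(t)\otimes\mu(t)$ replacing an optimal plan between two distinct solutions. First I would note that, for any $\nu\in\mathcal{P}_{2}(\R^{d})$, the only transport plan onto a Dirac $\delta_{x_{0}}$ is $\nu\otimes\delta_{x_{0}}$, so $d_{W}^{2}(\nu,\delta_{x_{0}})=\int|y-x_{0}|^{2}\,d\nu(y)$; this coercive quadratic in $x_{0}$ is minimized at the barycenter $m(\nu)=\int y\,d\nu(y)$, giving
\begin{equation*}
d_{W}^{2}(\nu,\Xi)=\int_{\R^{d}}|y-m(\nu)|^{2}\,d\nu(y)=\tfrac12\iint|x-y|^{2}\,d\nu(x)\,d\nu(y)=:\mathrm{Var}(\nu).
\end{equation*}
The minimizer $\delta_{m(\nu)}$ need not be supported in $\Omega$, which is exactly why $\Xi$ was defined to contain all singletons. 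It therefore suffices to prove $\mathrm{Var}(\mu(t))\le e^{2ct}\,\mathrm{Var}(\mu_{0})$ with $c=-\lamw+\eta^{-1}\|\nabla W\|_{L^{\infty}(\omo)}$ (a finite constant, since $\Omega$ bounded implies $\omo$ compact).

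Next I would differentiate $t\mapsto\mathrm{Var}(\mu(t))$ along the solution. As $\mu(\tacka)$ is a weak measure solution with $V\equiv0$, its velocity is $v(t,x)=P_{x}\bigl(-\nabla W*\mu(t)(x)\bigr)$ with $v\in L^{1}_{loc}([0,\infty);L^{2}(\mu(t)))$; since $\Omega$ is bounded, the functions $x\mapsto x_{i}$ and $x\mapsto|x|^{2}$ (multiplied by a smooth cutoff equal to $1$ near $\Omega$) are admissible in Definition~\ref{def:wms}, so $t\mapsto\int x\,d\mu(t,x)$ and $t\mapsto\int|x|^{2}\,d\mu(t,x)$ are locally absolutely continuous, and hence so is $\mathrm{Var}(\mu(t))=\int|x|^{2}\,d\mu(t,x)-|m(\mu(t))|^{2}$ (the barycenter staying in the bounded set $\Conv(\Omega)$). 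Using $\frac{d}{dt}m(\mu(t))=\int v(t,x)\,d\mu(t,x)$ and symmetrizing in $x$ and $y$, one gets for a.e.\ $t$
\begin{equation*}
\frac{d}{dt}\mathrm{Var}(\mu(t))=2\int_{\Omega}\bigl\langle x-m(\mu(t)),\,v(t,x)\bigr\rangle\,d\mu(t,x)=\iint_{\Omega\times\Omega}\bigl\langle x-y,\,v(t,x)-v(t,y)\bigr\rangle\,d\mu(t,x)\,d\mu(t,y).
\end{equation*}

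The heart of the proof is then the pointwise estimate $\langle x-y,\,v(t,x)-v(t,y)\rangle\le c\,|x-y|^{2}$ for $\mu(t)\otimes\mu(t)$-a.e.\ $(x,y)$; integrating it yields $\frac{d}{dt}\mathrm{Var}(\mu(t))\le 2c\,\mathrm{Var}(\mu(t))$, and Gr\"onwall followed by taking square roots gives the claim. To obtain the estimate, write $v(t,x)=w(x)-\xi(x)$ with $w(x)=-\nabla W*\mu(t)(x)$ and $\xi(x)=w(x)-P_{x}(w(x))$. By the Moreau decomposition of the projection onto the closed convex cone $T(\Omega,x)$, $\xi(x)$ lies in its polar cone, which equals $N^{C}(\Omega,x)=N^{P}(\Omega,x)=N(\Omega,x)$ because $\Omega$ is prox-regular; moreover $|\xi(x)|\le|w(x)|\le\|\nabla W\|_{L^{\infty}(\omo)}$ since $0\in T(\Omega,x)$ and $x-z\in\omo$ for $x,z\in\Omega$. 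For the potential part, $x-z$ and $y-z$ lie in $\Conv(\omo)$ and $(x-z)-(y-z)=x-y$ \emph{exactly} --- this is precisely the point at which comparing $\mu(t)$ with itself, rather than two distinct solutions, produces $\lamw$ in place of $\lamw^{-}$ --- so (A1) gives $\langle x-y,\,w(x)-w(y)\rangle=-\int\langle(x-z)-(y-z),\,\nabla W(x-z)-\nabla W(y-z)\rangle\,d\mu(t,z)\le-\lamw|x-y|^{2}$. For the boundary part, prox-regularity in the form \eqref{proxequiv}, applied to $\xi(x)\in N^{P}(\Omega,x)$ with $y\in\Omega$ and symmetrically to $\xi(y)$ with $x\in\Omega$, gives $\langle x-y,\,\xi(y)-\xi(x)\rangle=\langle y-x,\xi(x)\rangle+\langle x-y,\xi(y)\rangle\le\frac{|\xi(x)|+|\xi(y)|}{2\eta}|x-y|^{2}\le\eta^{-1}\|\nabla W\|_{L^{\infty}(\omo)}|x-y|^{2}$. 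Adding the two contributions gives the bound with the stated $c$.

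The only genuinely delicate point is justifying the differentiation of the variance (and of the barycenter) along a merely weak measure solution on a bounded, possibly nonsmooth domain; this is entirely analogous to the corresponding step in the proofs of Theorem~\ref{stagrad} and Proposition~\ref{contrprop}, and uses nothing beyond what appears there.
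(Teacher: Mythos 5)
Your proposal is correct and follows essentially the same route as the paper: both identify $d_{W}^{2}(\mu(t),\Xi)$ with the variance of $\mu(t)$ about its barycenter, split the velocity into the unconstrained part (estimated via the $\lambda_{W}$-convexity of $W$, where the exact cancellation $(x-z)-(y-z)=x-y$ is what preserves $\lamw$ rather than $\lamw^{-}$) plus the projection defect (estimated via the prox-regularity inequality \eqref{proxequiv}), and conclude by Gr\"onwall. The only difference is bookkeeping: the paper differentiates $d_{W}^{2}(\mu(t),\delta_{\bar x})$ with the barycenter frozen, applying the boundary estimate only at $x$ and thus obtaining the slightly sharper constant $\|\nabla W\|_{L^{\infty}(\omo)}/(2\eta)$, whereas your symmetrized coupling $\mu(t)\otimes\mu(t)$ counts the boundary defect at both $x$ and $y$ and lands exactly on the constant $\|\nabla W\|_{L^{\infty}(\omo)}/\eta$ appearing in the statement.
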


The proposition implies that solution can aggregate to a point (in perhaps infinite time) even on a nonconvex domain. We ask on what domains there exists a potential for which for any initial datum this aggregation property holds. We provide a sufficient condition on the shape of $\Omega$ for aggregation to hold: Let $\Diam(\Omega)=\sup_{x,y\in\Omega}|x-y|$.

\begin{thm}\label{aggrecri}
 Assume that $\Omega$ is bounded and satisfies (M1). If $\eta>\frac{1}{2}\Diam(\Omega)$, then for external potential $V\equiv 0$, there exists an interaction potential $W$ satisfying (A1) for some $\lamw>0$, and constant $C(\Omega)<0$   such that 
 \begin{equation}\label{aggrecon}
  d_{W}(\mu(t),\Xi)\leq d_{W}(\mu_0,\Xi)\exp\left(C\left(\Omega\right)t\right),
 \end{equation}
for all $t \geq 0$. In particular, the solution aggregates to a singleton:
 \begin{equation}\label{aggrecon2}
 \lim_{t\to\infty}d_{W}\left(\mu(t),\Xi\right)= \lim_{t\to\infty}d_{W}\left(\mu(t),\delta_{\bar x(t)}\right)=0\,,
 \end{equation}
where $\bar x(t)=\int_{\Omega} x d\mu(t)$ is the center of mass for $\mu(t)$.
\end{thm}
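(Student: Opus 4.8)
The plan is to take $W$ to be the quadratic potential and run a Gr\"onwall estimate directly on $\mathcal{I}(\mu):=d_{W}(\mu,\Xi)^{2}$, rather than quoting Proposition~\ref{single_contr}: for an even $W$ that is $\lamw$-geodesically convex on $\Conv(\omo)$ one always has $\|\nabla W\|_{L^{\infty}(\omo)}\ge\lamw\,\Diam(\Omega)$, so Proposition~\ref{single_contr} only yields contraction for $\eta>\Diam(\Omega)$, and a finer computation is needed. Set $W(x)=\tfrac12|x|^{2}$; then $W\in C^{1}(\R^{d})$ is $1$-geodesically convex on $\R^{d}$, hence on $\Conv(\omo)$, so (A1) holds with $\lamw=1>0$. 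With $V\equiv0$ and this $W$, $\int_{\Omega}\nabla W(x-y)\,d\mu(y)=x-\bar{x}$, where $\bar{x}=\bar{x}(\mu):=\int_{\Omega}y\,d\mu(y)$, so the velocity in \eqref{conequ} is $v(x)=P_{x}(\bar{x}-x)=P_{T(\Omega,x)}(\bar{x}-x)$; moreover $d_{W}(\mu,\delta_{z})^{2}=\int_{\Omega}|x-z|^{2}d\mu(x)$ is minimised at $z=\bar{x}$, so $\mathcal{I}(\mu)=\int_{\Omega}|x-\bar{x}|^{2}d\mu(x)$ is the variance, and $\mathcal{E}(\mu)=\mathcal{W}(\mu)=\tfrac14\iint|x-y|^{2}d\mu\,d\mu=\tfrac12\mathcal{I}(\mu)$.

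Next I use the energy identity. By Theorem~\ref{exgrad} the weak measure solution is the gradient flow of $\mathcal{E}$ and satisfies \eqref{enedis}; since $-\int\nabla W(x-y)d\mu(t,y)-\nabla V(x)=\bar{x}(t)-x$ and $\mathcal{E}=\tfrac12\mathcal{I}$, differentiating \eqref{enedis} gives, for a.e.\ $t>0$,
\[
\tfrac12\tfrac{d}{dt}\mathcal{I}(\mu(t))=-\int_{\Omega}\big|P_{T(\Omega,x)}(\bar{x}(t)-x)\big|^{2}\,d\mu(t,x).
\]
Applying the Moreau decomposition of $w=\bar{x}(t)-x$ with respect to the mutually polar convex cones $T(\Omega,x)$ and $N(\Omega,x)$ — that is, $w=P_{T(\Omega,x)}(w)+n_{t}(x)$ with $n_{t}(x):=P_{N(\Omega,x)}(w)$ and the two summands orthogonal, so $|P_{T(\Omega,x)}(w)|^{2}=|w|^{2}-|n_{t}(x)|^{2}$ — we obtain
\[
\tfrac{d}{dt}\mathcal{I}(\mu(t))=-2\,\mathcal{I}(\mu(t))+2\int_{\Omega}|n_{t}(x)|^{2}\,d\mu(t,x)\qquad\text{for a.e.\ }t>0 .
\]

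The crux is the geometric estimate: there is $\kappa=\kappa(\Omega)<1$, with $\kappa<1$ precisely because $\eta>\tfrac12\Diam(\Omega)$, such that for every $\mu\in\mathcal{P}(\Omega)$
\[
\int_{\Omega}|n(x)|^{2}\,d\mu(x)\le\kappa\,\mathcal{I}(\mu),\qquad n(x):=P_{N(\Omega,x)}(\bar{x}-x).
\]
To prove it, fix $x$ with $n(x)\neq0$ and let $e(x)=n(x)/|n(x)|$, a unit vector in $N(\Omega,x)=N^{P}(\Omega,x)$; the prox-regularity inequality \eqref{proxequiv} gives $\langle e(x),y-x\rangle\le\frac{1}{2\eta}|y-x|^{2}$ for all $y\in\Omega$. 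Integrating against $d\mu(y)$, using $|n(x)|=\langle e(x),\bar{x}-x\rangle$, $\int_{\Omega}|y-x|^{2}d\mu(y)=\mathcal{I}(\mu)+|x-\bar{x}|^{2}$ and $|y-x|\le\Diam(\Omega)$, yields
\[
|n(x)|\le\frac{\mathcal{I}(\mu)+|x-\bar{x}|^{2}}{2\eta}\le\frac{\Diam(\Omega)}{2\eta}\int_{\Omega}|y-x|\,d\mu(y).
\]
When $\bar{x}\in\Omega$ this already closes the argument: \eqref{proxequiv} with $v=n(x)$, $y=\bar{x}$ gives $|n(x)|\le|\bar{x}-x|^{2}/(2\eta)$, hence $|n(x)|^{2}\le(\Diam(\Omega)/2\eta)^{2}|\bar{x}-x|^{2}$, and integrating in $x$ gives the estimate with $\kappa=(\Diam(\Omega)/2\eta)^{2}<1$. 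In general $\bar{x}$ need not lie in $\Omega$, and one must remove the extra $\mathcal{I}(\mu)$ above by exploiting the zero-mean constraint $\int_{\Omega}(y-\bar{x})\,d\mu(y)=0$ together with the fact that $\eta>\tfrac12\Diam(\Omega)$ forces $\dist(\bar{x},\Omega)$ to be small — of order $\mathcal{I}(\mu)/\eta$ — which one sees by applying \eqref{proxequiv} at the projection of $\bar{x}$ onto $\Omega$.

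Granting the geometric estimate, the two dissipation displays give $\tfrac{d}{dt}\mathcal{I}(\mu(t))\le-2(1-\kappa)\,\mathcal{I}(\mu(t))$ for a.e.\ $t$, whence $\mathcal{I}(\mu(t))\le\mathcal{I}(\mu_{0})e^{-2(1-\kappa)t}$, i.e.\ \eqref{aggrecon} with $C(\Omega)=-(1-\kappa)<0$; since $d_{W}(\mu(t),\delta_{\bar{x}(t)})^{2}=\mathcal{I}(\mu(t))=d_{W}(\mu(t),\Xi)^{2}\to0$, \eqref{aggrecon2} follows as well. The main obstacle is the geometric estimate in the non-convex regime: the crude bound (replacing $|y-x|$ by $\Diam(\Omega)$ throughout, or $\mathcal{I}(\mu)+|x-\bar{x}|^{2}$ by $2\mathcal{I}(\mu)$ after integrating in $x$) only gives $\kappa<1$ for $\eta>\Diam(\Omega)/\sqrt2$; pushing the threshold down to the essentially optimal $\eta>\tfrac12\Diam(\Omega)$ — a sphere of radius $\eta$, on which the radial (purely normal) driving velocity is annihilated by $P_{T(\Omega,x)}$, shows $\tfrac12\Diam(\Omega)$ cannot be improved — requires using that $\eta$-prox-regularity holds at \emph{all} boundary points, not only those in $\supp\mu$, thereby ruling out the configurations for which the tangential projection would destroy almost all of the velocity.
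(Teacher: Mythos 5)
Your route diverges from the paper's at the very first step, and the divergence rests on a misreading. The paper proves Theorem \ref{aggrecri} in two lines as a corollary of Proposition \ref{single_contr}: with $W(x)=\frac12|x|^{2}$ one has $\lamw=1$ and $\|\nabla W\|_{L^{\infty}(\omo)}\leq\Diam(\Omega)$, and the contraction coefficient actually established in the \emph{proof} of that proposition,
\begin{equation*}
-\lamw+\frac{\|\nabla W\|_{L^{\infty}(\omo)}}{2\eta}\leq -1+\frac{\Diam(\Omega)}{2\eta}=:C(\Omega),
\end{equation*}
is negative precisely when $\eta>\frac12\Diam(\Omega)$. You dismiss this route because "Proposition \ref{single_contr} only yields contraction for $\eta>\Diam(\Omega)$"; that is true of the rate $-\lamw+\|\nabla W\|_{L^{\infty}(\omo)}/\eta$ in the displayed statement, but the differential inequality its proof derives (and which the paper's proof of the theorem invokes) is $\frac12\frac{d^{+}}{dt}d_{W}^{2}(\mu(t),\Xi)\leq\bigl(-\lamw+\|\nabla W\|_{L^{\infty}(\omo)}/(2\eta)\bigr)\,d_{W}^{2}(\mu(t),\Xi)$, with $2\eta$ in the denominator. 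Your own (correct) observation that $\|\nabla W\|_{L^{\infty}(\omo)}\geq\lamw\Diam(\Omega)$ for even $\lamw$-convex $W$ then shows the quadratic potential saturates this bound and yields exactly the threshold $\eta>\frac12\Diam(\Omega)$: the "finer computation" you set out to perform is already contained in the proposition.

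The alternative argument you build instead has a genuine, and self-acknowledged, gap. The energy identity $\frac{d}{dt}\mathcal{I}(\mu(t))=-2\mathcal{I}(\mu(t))+2\int_{\Omega}|n_{t}(x)|^{2}d\mu(t,x)$ is correct and is a nice reformulation, but everything hinges on the geometric estimate $\int_{\Omega}|n(x)|^{2}d\mu(x)\leq\kappa\,\mathcal{I}(\mu)$ with $\kappa<1$ for all $\eta>\frac12\Diam(\Omega)$. You prove this only when the center of mass $\bar x$ lies in $\Omega$, so that \eqref{proxequiv} can be applied with $y=\bar x$; on a nonconvex domain $\bar x$ generically lies outside $\Omega$, and your general bound (integrating \eqref{proxequiv} in $y$ against $\mu$) produces the extra term $\mathcal{I}(\mu)$ and only gives $\kappa=\Diam(\Omega)^{2}/(2\eta^{2})$, i.e.\ the weaker threshold $\eta>\Diam(\Omega)/\sqrt{2}$. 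The proposed repair --- controlling $\dist(\bar x,\Omega)$ by $\mathcal{I}(\mu)/\eta$ and "exploiting the zero-mean constraint" --- is a sketch with no estimate behind it, and your closing paragraph concedes the step is open. As written, the proposal therefore proves the theorem only under the stronger hypothesis $\eta>\Diam(\Omega)/\sqrt{2}$. (Your instinct that the case $\bar x\notin\Omega$ is the delicate point is sound --- the paper's own application of \eqref{errorest} to $\delta_{\bar x}$ quietly assumes \eqref{proxequiv} with $y=\bar x$ --- but identifying the difficulty is not the same as resolving it, and the statement you were asked to prove requires reaching $\frac12\Diam(\Omega)$.)
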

Note that the constant in  $\eta>\frac{1}{2}\Diam(\Omega)$ cannot be improved, as the example in Remark \ref{sharpness} shows.
\medskip

We generalize the two existence and stability results to the unbounded case in two different settings.
In case $\Omega$ is unbounded, and for general initial data
$\mu_{0}$, possibly with noncompact support, we give
the global assumptions: for some constants $\lamw,\lamv\in\mathbb{R}$
and $C>0$,
\begin{itemize}
\item[\textbf{(GM1)}] $\Omega\subset\mathbb{R}^{d}$ is convex, i.e.,
$\Omega$ is $\infty$-prox-regular.
\item[\textbf{(GA1)}] $W\in C^{1}(\mathbb{R}^{d})$ is $\lamw$-geodesically convex on
$\Conv\left(\omo\right)=\omo$.
\item[\textbf{(GA2)}] $\nabla W$ has linear growth, i.e., $|\nabla
W(x)|\leq C(1+|x|)$ for all $x\in \mathbb{R}^{d}$.
\item[\textbf{(GA3)}] $V\in C^{1}(\mathbb{R}^{d})$ is
$\lamv$-geodesically convex on $\Conv\left(\Omega\right)=\Omega$.
\item[\textbf{(GA4)}] $\nabla V$ has linear growth, $|\nabla V(x)|\leq
C(1+|x|)$ for all $x\in\mathbb{R}^{d}$.
\end{itemize}
The main result in this setting reads as:

\begin{thm}\label{globexis}
Assume $\Omega$ is unbounded and satisfies (GM1) and $W, V$ satisfy
(GA1)-(GA4), then for any $\mu_{0}\in \mathcal{P}_{2}(\Omega)$,
there exists a gradient flow solution $\mu(\tacka)$ with respect to $\mathcal{E}$
such that $\mu(\tacka)$ is a weak measure solution to
\eqref{conequ}. Moreover, for a.e. $t>0$
\begin{equation*}
|\mu'|^{2}(t)=\int_{\Omega} \left|P_{x}\left(-\nabla
 W*\mu(r)(x)-\nabla V(x)\right)\right|^{2} d\mu(t,x),
\end{equation*}
and for any $0\leq s\leq t<\infty$
\begin{equation*}
\mathcal{E}(\mu(s))=
\mathcal{E}(\mu(t))+\int_{s}^{t}\int_{\Omega}\left|P_{x}\left(-\nabla
 W*\mu(r)(x)-\nabla V(x)\right)\right|^{2}d\mu(r,x)dr.
\end{equation*}
Similarly, if $\mu^{1}(\tacka),\mu^{2}(\tacka)$ are two weak measure solutions
to (\ref{conequ}) with initial data $\mu^{1}_{0},\mu^{2}_{0}$
respectively, then
\begin{equation}\label{globcontra}
d_{W}\left(\mu^{1}(t),\mu^{2}(t)\right)\leq \exp\left(-\left(\lamw^{-}+\lamv\right)
t\right) d_{W}\left(\mu^{1}_{0},\mu^{2}_{0}\right).
\end{equation}
for any $t\geq 0$. Also the weak measure solution is characterized
by the system of Evolution Variational Inequalities:
\begin{equation} \label{EVI4}
\frac{1}{2}\frac{d}{dt}d_{W}^{2}\left(\mu(t),\nu\right)+ \left(\frac{\lamw^{-}}{2}+\frac{\lamv}{2}\right)
d_{W}^{2}\left(\mu(t),\nu\right)\leq
\mathcal{E}(\nu)-\mathcal{E}(\mu(t)),
\end{equation}
for a.e. $t>0$ and for all $\nu\in\mathcal{P}_{2}(\Omega)$.
\end{thm}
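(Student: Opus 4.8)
The plan is to reduce Theorem~\ref{globexis} to the bounded case (Theorems~\ref{exgrad}, \ref{stagrad}) together with the general gradient-flow machinery of Ambrosio--Gigli--Savar\'e \cite{AGS}, exploiting the linear-growth hypotheses (GA2), (GA4) to control the behavior at infinity. First I would check that under (GM1) and (GA1)--(GA4) the energy $\mathcal{E}=\mathcal{W}+\mathcal{V}$ is proper, lower semicontinuous, coercive, and $\lambda$-geodesically convex along generalized geodesics in $\mathcal{P}_2(\Omega)$, with $\lambda = \lambda_W^- + \lambda_V$; the linear growth of $\nabla W,\nabla V$ gives the quadratic growth bound $|\mathcal{E}(\mu)|\le C(1+\int_\Omega|x|^2\,d\mu)$ that rules out finite-time blow-up of the second moment. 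Convexity of $\Omega$ makes $\Conv(\Omega-\Omega)=\Omega-\Omega$, so (GA1) is exactly the hypothesis needed for $\mathcal{W}$ to be $\lambda_W$-convex along generalized geodesics (with the usual caveat that on a general space only $\lambda_W^-$ survives unless centers of mass agree). Once $\mathcal{E}$ is $\lambda$-convex, lsc and coercive, the abstract theory yields a unique EVI$_\lambda$ gradient flow $\mu(\cdot)$ for every $\mu_0\in\mathcal{P}_2(\Omega)$, which immediately gives \eqref{globcontra}, \eqref{EVI4}, the energy-dissipation identity, and the metric-slope formula, provided we identify the metric slope $|\partial\mathcal{E}|(\mu)$ with $\big(\int_\Omega |P_x(-\nabla W*\mu-\nabla V)|^2\,d\mu\big)^{1/2}$.

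The core work is that slope identification and the proof that the EVI flow is a weak measure solution of \eqref{conequ}. Here I would run the particle/truncation approximation already used for the bounded case: approximate $\mu_0$ by $\mu_0^n$ supported in $\Omega\cap B_n$, so that the restriction $\Omega\cap \overline{B_n}$ (or a smoothing of it) is bounded and still prox-regular, apply Theorems~\ref{exgrad}--\ref{stagrad} to get solutions $\mu^n(\cdot)$ with the energy identity and the slope formula on the bounded truncation, and then pass to the limit. The linear growth of $\nabla W$ and $\nabla V$ provides the a priori bound $\frac{d}{dt}\int|x|^2\,d\mu^n(t)\le C(1+\int|x|^2\,d\mu^n(t))$ (Gronwall), uniformly in $n$, so the second moments stay bounded on compact time intervals and mass does not escape; this gives tightness and hence (along a subsequence, narrowly, locally uniformly in $t$) a limit curve $\mu(\cdot)\in\mathcal{P}_2(\Omega)$. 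Lower semicontinuity of $\mathcal{E}$ and of the metric derivative, together with weak convergence of the velocity fields $v^n(t,x)=-P_x(\nabla W*\mu^n(t)(x)+\nabla V(x))$ in $L^2(\mu^n(t))$ — using continuity of $\nabla W$, $\nabla V$ and the measurable-selection/upper-semicontinuity properties of the cone projection $P_{T(\Omega,x)}$ on the convex set $\Omega$ — lets one pass to the limit in the weak formulation of Definition~\ref{def:wms} and in the energy identity, yielding the stated identities for the limit. Uniqueness then follows from \eqref{globcontra}.

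The step I expect to be the main obstacle is controlling the velocity field and the projection $P_x$ uniformly under the truncation and in the unbounded limit: on a bounded prox-regular domain the quantities $\|\nabla W\|_{L^\infty(\Omega-\Omega)}$ and $\|\nabla V\|_{L^\infty(\Omega)}$ are finite, but here $\Omega$ is unbounded and these norms are infinite, so the contraction estimate \eqref{contra3} of Theorem~\ref{stagrad} cannot be used directly. This is precisely why convexity ($\eta=\infty$) is assumed: the boundary term $(\|\nabla W\|_\infty+\|\nabla V\|_\infty)/\eta$ vanishes, so the truncated estimates are $d_W(\mu^1_n(t),\mu^2_n(t))\le e^{-(\lambda_W^-+\lambda_V)t}d_W(\mu^1_{0,n},\mu^2_{0,n})$ with a constant independent of $n$, and one can take $n\to\infty$. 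I would therefore first establish a clean stability estimate for the convex \emph{unbounded} truncations $\Omega\cap \overline{B_n}$ with boundary term controlled by $\eta_n\to\infty$ (or simply note $\Omega\cap \overline{B_n}$ need not be used — intersect instead with large balls and observe the extra boundary contributes nothing in the convex case because one can equivalently work with $\Omega$ itself and a sequence of compactly supported data), then combine the uniform second-moment bound with the uniform contraction to get a Cauchy sequence in $C([0,T];\mathcal{P}_2(\Omega))$ whose limit is the desired solution; the EVI characterization \eqref{EVI4} passes to the limit since each term is lsc or continuous under narrow convergence with bounded second moments. The remaining bookkeeping — measurability of $x\mapsto P_x(v(x))$, the chain rule along the flow, and matching the tangent velocity with $v(t,\cdot)$ — is handled exactly as in the bounded case and in \cite{AGS,CDFLS}.
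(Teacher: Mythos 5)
Your overall architecture matches the paper's: approximate $\mu_0$ by compactly supported particle measures, exploit convexity of $\Omega$ so that the prox-regularity error term $\langle P_x(v)-v,\,x-y\rangle$ is nonpositive and the contraction constant $e^{-(\lamw^-+\lamv)t}$ is uniform in the approximation, control the spreading via the linear growth of $\nabla W,\nabla V$ and Gronwall, pass to the limit in the energy--dissipation inequality, and then identify the limit as a gradient flow via the chain rule and the subdifferential characterization, with \eqref{globcontra} and \eqref{EVI4} following from Gronwall as in the bounded case. That is exactly Section \ref{sec:unbddglob} of the paper (Proposition \ref{contrprop2}, Lemma \ref{lem:suppgrowth}, Theorem \ref{thm:maxslope2}, and the proof of Theorem \ref{globexis}). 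The abstract EVI-existence route you sketch in your first paragraph is a legitimate alternative for producing the metric gradient flow, but it does not save work: you would still have to verify $\lambda$-convexity of $\mathcal W$ along generalized geodesics and, above all, carry out the slope identification and show the EVI flow solves \eqref{conequ} with the projected velocity, which is where all the effort lies; the paper does not take this route.

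Two soft spots deserve correction. First, solving the equation on the bounded truncations $\Omega\cap\overline{B_n}$ and invoking Theorems \ref{exgrad}--\ref{stagrad} is problematic: the artificial boundary $\partial B_n\cap\mathring\Omega$ changes the dynamics (particles reaching it are projected), so the truncated solutions are not solutions of the original problem unless you first prove a \emph{support} bound keeping particles away from $\partial B_n$ --- a second-moment bound does not do this. You half-correct yourself in the last paragraph; the paper avoids the issue entirely by solving the discrete ODE system directly on the unbounded $\Omega$ (Theorem \ref{existode} already covers this under (GA2), (GA4)) and proving the support growth estimate of Lemma \ref{lem:suppgrowth}. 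Second, you cannot ``pass to the limit in the weak formulation of Definition \ref{def:wms}'' via weak convergence of the velocities: even on a convex domain $x\mapsto P_x(v)$ is discontinuous at $\partial\Omega$, and the paper explicitly remarks that this direct passage fails. What survives the limit is only the one-sided bound $\liminf_n\int|P_x(v^n)|^2\,d\mu^n\ge\int|P_x(v)|^2\,d\mu$ from Proposition \ref{lscproj}, which feeds the maximal-slope inequality; the identification of the tangent velocity with $P_x(-\nabla W*\mu-\nabla V)$ must then come from the chain rule combined with that inequality, as in the proof of Theorem \ref{exgrad}. Relatedly, on an unbounded domain you need genuine \emph{continuity} of $\mathcal E$ along the $d_W$-convergent sequence (Proposition \ref{lscwv}, using the quadratic growth implied by (GA2), (GA4) and convergence of second moments), not just lower semicontinuity, since $\mathcal E(\mu^n(s))$ appears on the favorable side of the dissipation inequality.
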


Since $\Omega$ is convex means $\Omega$ is $\infty$-prox-regular, the stability estimate (\ref{globcontra}) and EVI (\ref{EVI4}) in the convex setting are consistent with the estimates in the $\eta$-prox-regular setting by taking $\eta=\infty$ in (\ref{contra3}) and (\ref{EVI3}).

The convexity assumption is needed since on nonconvex unbounded domains we do not know how to control the error due to lack of convexity (as measured by the prox-regularity \eqref{proxequiv}) in the stability of solutions. However, we can show that control assuming compactly supported initial data. Therefore, when $\Omega$ is unbounded and the initial data $\mu_{0}$ has compact support, we assume there exist some constants $\eta>0, \lamw,\lamv\in \mathbb{R},C>0$ such that the following local assumptions
hold
\begin{itemize}
\item[\textbf{(M1)}] $\Omega\subset\mathbb{R}^{d}$ is
$\eta$-prox-regular.
\item[\textbf{(LA1)}] $W\in C^{1}(\mathbb{R}^{d})$ is locally $\lambda$-geodesically convex on
$\mathbb{R}^{d}$.
\item[\textbf{(LA2)}] $\nabla W$ has linear growth, i.e., $|\nabla
W(x)|\leq C(1+|x|)$ for all $x\in \mathbb{R}^{d}$.
\item[\textbf{(LA3)}] $V\in C^{1}(\mathbb{R}^{d})$ is locally
$\lambda$-geodesically convex on $\mathbb{R}^{d}$.
\item[\textbf{(LA4)}] $\nabla V$ has linear growth, $|\nabla V(x)|\leq
C(1+|x|)$ for all $x\in\mathbb{R}^{d}$.
\end{itemize}
Note that the conditions (LA1) and (LA3) are satisfied whenever $V$ and $W$ are $C^2$ functions on $\R^d$, which is the case in many practical applications.

We show in this setting the following theorem about existence and stability for
weak measure solutions for initial data with compact support.
\begin{thm}\label{locexistence}
Given that $\Omega$ is unbounded and satisfies (M1), and $W, V$
satisfy (LA1)-(LA4). If $\supp(\mu_0)\subset\Omega$ is compact, say
$\supp(\mu_0)\subset B(r_0)\cap \Omega$, then there exists a weak
measure solution $\mu(\tacka)$ to \eqref{conequ} such that
$\supp(\mu(t))\subset B(r(t))$ for $r(t)=(r_{0}+1)\exp(Ct)$, where
$C=C(W,V)$ and $\mu(\tacka)$ satisfies for a.e. $t>0$
\begin{equation*}
|\mu'|^{2}(t)=\int_{\Omega} \left|P_{x}\left(-\nabla
 W*\mu(r)(x)-\nabla V(x)\right)\right|^{2} d\mu(t,x),
\end{equation*}
and for any $0\leq s\leq t<\infty$
\begin{equation*}
\mathcal{E}(\mu(s))=
\mathcal{E}(\mu(t))+\int_{s}^{t}\int_{\Omega}\left|P_{x}\left(-\nabla
 W*\mu(r)(x)-\nabla V(x)\right)\right|^{2}d\mu(r,x)dr.
\end{equation*}
Moreover if we have two such solutions $\mu^{i}(\tacka)$ with initial
data $\mu^{i}_{0}$ satisfying for $i=1,2$, $\supp(\mu^{i}_{0})$ are
compact and $\supp(\mu^{i}(t))\subset B\left(r(t)\right)$ for all
$t>0$, then for all $k\in\mathbb{N}$ such that $B(r(t))\subset
K_{k}$ we have
\begin{equation}\label{localcontra}
d_{W}\left(\mu^{1}(t),\mu^{2}(t)\right)\leq \exp\left(\left(
-\lambda_{W,k}^{-}-\lambda_{V,k} +\frac{\|\nabla
W\|_{L^{\infty}(\omok)}+\|\nabla
V\|_{L^{\infty}(\Omega_{k})}}{\eta}\right)t\right)d_{W}(\mu^{1}_{0},\mu^{2}_{0}).
\end{equation}
 where $\lambda_{W,k},\lambda_{V,k}$ are the geodesic convexity constants of $W$ and $V$ in
 $K_{k}$ and $\Omega_{k}=\Omega\cap K_{k}$.
\end{thm}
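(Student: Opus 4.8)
The plan is to reduce the unbounded, non-convex problem to the particle machinery already available on prox-regular domains, using the linear growth of $\nabla W,\nabla V$ to confine the support of the solution, and then to transfer the limiting and identification arguments from the bounded case (Theorems \ref{exgrad} and \ref{stagrad}), with every estimate localized to a large ball. \emph{Step 1 (confinement of the support).} Fix $T>0$, approximate $\mu_0$ by atomic measures $\mu_0^n=\sum_j m_j^n\delta_{x_j^n}$ with $\supp\mu_0^n\subset B(r_0)\cap\Omega$ and $d_W(\mu_0^n,\mu_0)\to0$, and solve the particle ODE system using Theorem \ref{existode}. Since $0\in T(\Omega,x)$ one has $|P_x(w)|\le|w|$ for every $x\in\Omega$, so $|\dot x_j^n(t)|\le|\nabla W*\mu^n(t)(x_j^n)+\nabla V(x_j^n)|$; by (LA2) and (LA4) this is at most $C(1+\max_i|x_i^n(t)|)$, and a Gr\"onwall argument on $t\mapsto\max_i|x_i^n(t)|$ gives $\max_i|x_i^n(t)|\le(r_0+1)e^{Ct}-1$. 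Hence the ODE solution is global and $\supp\mu^n(t)\subset B(r(t))$, $r(t)=(r_0+1)e^{Ct}$, uniformly in $n$, a bound that will survive in the narrow limit.

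\emph{Step 2 (convergence and identification).} Given $T$, choose $k$ with $B(r(T))\subset K_k$ and set $\Omega_k=\Omega\cap K_k$. On $[0,T]$ every $\mu^n(t)$ is supported in $\Omega_k$, so in the contraction estimate of Proposition \ref{contrprop} only the restriction of $W$ to $\Conv(\omok)$, of $V$ to $\Conv(\Omega_k)$, and of $\nabla W,\nabla V$ to $\omok$ and $\Omega_k$ enter, while $\Omega$ is globally $\eta$-prox-regular; this yields $d_W(\mu^n(t),\mu^m(t))\le e^{\alpha_k t}d_W(\mu_0^n,\mu_0^m)$ on $[0,T]$ with
\begin{equation*}
\alpha_k=-\lambda_{W,k}^{-}-\lambda_{V,k}+\frac{\|\nabla W\|_{L^{\infty}(\omok)}+\|\nabla V\|_{L^{\infty}(\Omega_k)}}{\eta}.
\end{equation*}
Thus $(\mu^n)$ is Cauchy in $C([0,T];\mathcal P_2(\Omega))$; the limits on the intervals $[0,m]$, $m\in\mathbb N$, are consistent and define a locally absolutely continuous curve $\mu(\tacka)$ on $[0,\infty)$ with $\mu(0)=\mu_0$ and $\supp\mu(t)\subset B(r(t))$. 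To see that $\mu(\tacka)$ is a gradient flow and a weak measure solution I would run exactly the argument of Theorem \ref{exgrad}: the particle curves satisfy the energy-dissipation equality for $\mathcal E$ with slope $\int_\Omega|P_x(-\nabla W*\mu^n(x)-\nabla V(x))|^2\,d\mu^n$; lower semicontinuity of $\mathcal E$ and of this slope functional along narrowly converging measures supported in the fixed compact set $\Omega_k$ gives that $\mu(\tacka)$ achieves maximal dissipation, hence is a gradient flow, so the two displayed identities hold; the local characterization of the minimal element of $\partial\mathcal E(\mu(t))$ (all quantities evaluated on $\supp\mu(t)$) then yields the weak measure solution property.

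\emph{Step 3 (stability).} For two weak measure solutions with $\supp\mu^i(t)\subset B(r(t))$, inequality \eqref{localcontra} follows by localizing the proof of Theorem \ref{stagrad}. Fixing $T$ and $k$ with $B(r(T))\subset K_k$, choosing $\gamma_t\in\Gamma_o(\mu^1(t),\mu^2(t))$ and differentiating $t\mapsto d_W^2(\mu^1(t),\mu^2(t))$, one reaches, as in Proposition \ref{contrprop}, a pointwise bound of the form
\begin{equation*}
\langle P_{x_1}(v_1)-P_{x_2}(v_2),x_1-x_2\rangle\le\langle v_1-v_2,x_1-x_2\rangle+\frac{|v_1|+|v_2|}{2\eta}|x_1-x_2|^2,
\end{equation*}
which combines the variational inequality for the projection onto $T(\Omega,x)$ with \eqref{proxequiv}; integrating against $\gamma_t$, estimating the first term by the $\lambda$-convexity of $W$ on $\Conv(\omok)$ and of $V$ on $\Conv(\Omega_k)$ and the second by $|v_i|\le\|\nabla W\|_{L^{\infty}(\omok)}+\|\nabla V\|_{L^{\infty}(\Omega_k)}$, gives $\frac{d}{dt}d_W^2\le2\alpha_k\,d_W^2$ on $[0,T]$, hence \eqref{localcontra}; since $T$ was arbitrary this holds for all $t$, and in particular weak measure solutions with given compactly supported data are unique.

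\emph{Main obstacle.} The heart of the matter — and the reason the compact-support hypothesis is imposed — is the prox-regularity error term $\frac{|v_1|+|v_2|}{2\eta}|x_1-x_2|^2$ in the contraction computation: on an unbounded non-convex domain the velocities $v_i$ are only controlled by the linear growth of $\nabla W,\nabla V$ and need not be globally bounded, so this error cannot be absorbed. Step 1 is precisely what makes the argument go through, pinning all the mass inside $B(r(t))$ so that the error is controlled by the local sup-norms $\|\nabla W\|_{L^\infty(\omok)}$, $\|\nabla V\|_{L^\infty(\Omega_k)}$. One must also be careful that $\Omega_k=\Omega\cap K_k$ need not itself be prox-regular, so every step has to be phrased in terms of $\Omega$ (globally $\eta$-prox-regular) with all $L^\infty$-norms and convexity constants taken over the appropriate subsets of $K_k$; and the passage to the limit through the discontinuous projection $P_x$ must be carried out via the energy-dissipation (maximal-slope) characterization rather than by a direct argument on the velocity fields.
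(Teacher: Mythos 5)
Your proposal is correct and follows essentially the same route as the paper: exponential confinement of the particle supports via the linear growth of $\nabla W,\nabla V$ (the paper's Lemma \ref{lem:suppgrowth}), a localized contraction estimate on $[0,T]$ with the constants taken over $\Omega_k=\Omega\cap K_k$ to get a Cauchy sequence, the lower-semicontinuity/maximal-slope argument of Theorem \ref{exgrad} to identify the limit as a gradient flow and weak measure solution, and the same split of the contraction integrand into the convexity term and the prox-regularity error term to obtain \eqref{localcontra}. Your closing remarks on why compact support is needed and on using the prox-regularity of $\Omega$ itself rather than of $\Omega_k$ match the paper's reasoning exactly.
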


Let us point out that we are not able to get the system of Evolution Variational
Inequalities in its whole generality although they hold for compactly supported reference measures.

\begin{remk} \label{rem:instab}
Here we illustrate on an example that well-posedness of weak measure solutions cannot hold on domains which have an inside corner. 
Let $\Omega=\{(r\cos(\theta),r\sin(\theta))\in\mathbb{R}^{2}:0\leq r\leq 1, \frac{\pi}{4}\leq\theta\leq \frac{7\pi}{4}\}$ be as in Figure \ref{fig:instability}. 
Let $V(x)=-2 x_1$ be the external potential and $W$ be any $C^{2}$ convex  interaction potential with $\nabla W(0)=0$. Define $\gamma_1(s)=(1,-1) \, s$  and  $\gamma_2(s)=(1,1)\, s$ for $0\leq s\leq 1$. 
Then for initial datum $\mu_0 = \delta_0$ both $\mu_1(t) = \delta_{\gamma_1(t)}$ and 
 $\mu_2(t) = \delta_{\gamma_2(t)}$ are weak measure solutions. Thus uniqueness and hence stability of solutions cannot hold.

\begin{figure}[h]
\centering
\includegraphics[width=0.48\linewidth,angle=270]{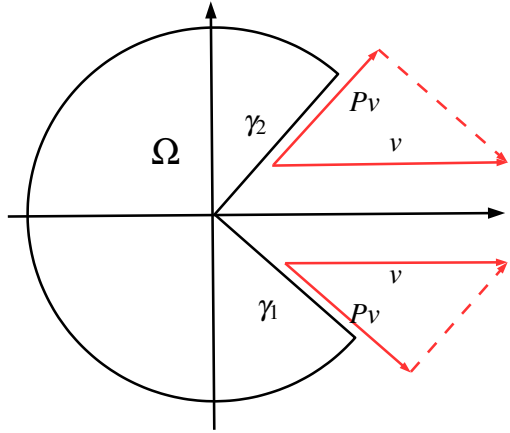}
\put(-170,-70){\Large $\Omega$}
\put(-80,-65){$v$}
\put(-95,-50){$Pv$}
\put(-80,-115){$v$}
\put(-95,-130){$Pv$}
\put(-130,-127){$\gamma_{1}$}
\put(-135,-57){$\gamma_{2}$}
\caption{The red arrows show the projected velocity field $Pv$ on $\gamma_{1}$ and $\gamma_{2}$, which are driving the particles apart from each other.}
\label{fig:instability}
\end{figure}
\end{remk}

\subsection{Strategy of the proof.}
The strategy to construct weak measure solutions to (\ref{conequ})
is to show the existence of gradient flow with respect to
$\mathcal{E}$. We approximate the initial data $\mu_0$ in Wasserstein metric by
$\mu^{n}_0=\sum_{i=1}^{k(n)} m^{n}_{i}\delta_{x^{n}_{i}}$ for
$x^{n}_{i}\in \Omega\bigcap B(n)$, and solve (\ref{conequ}) with
$\mu^{n}(0)=\mu^{n}_0$. Then (\ref{conequ}) becomes a discrete
projected system, for $1\leq i\leq k(n)$
\begin{equation}\label{disproj3}
  \left\{
   \begin{aligned}
   \dot{x}^{n}_{i}(t) &=P_{x^{n}_{i}(t)}\left(-\sum_{j}m^{n}_{j}\nabla
W\left(x^{n}_{i}-x^{n}_{j}\right)-\nabla V(x^{n}_{i})\right) \te{ a.e. } t\geq 0,  \\
   x^{n}_{i}(0) &= x^{n}_i\in \Omega , \\
   \end{aligned}
  \right.
  \end{equation}
which we show based on the well-posedness theory from non-convex sweeping process
differential inclusions with perturbations. For the general theory of sweeping processes we
refer to \cite{ET1,ET2,Ven} and references therein. To be precise, based on \cite{ET2} there exists a
locally absolutely continuous function $[0,\infty)\ni t\mapsto
x(t)=(x^{n}_{1}(t),\cdots,x^{n}_{k(n)}(t))\in \Omega^{n}=\Omega\times ...\times
\Omega$, such that for a.e. $t>0$,
\begin{equation}\label{disinclu}
-\dot{x}(t) \in N(\Omega^{n},x(t))-v(t,x(t)),
\end{equation}
where $v(t,x(t))=-\sum_{j}m^{n}_{j}\nabla
W\left(x^{n}_{i}-x^{n}_{j}\right)-\nabla V(x^{n}_{i})$ in our case. We then show that the solution to
(\ref{disinclu}) is actually a solution to (\ref{disproj3}).

Next we explore the properties of the sequence of solutions $\{\mu^{n}(\tacka)\}_{n}$. In particular,
\begin{itemize}
\item When $\Omega$ is bounded or $\Omega$ is unbounded but convex, we first prove the stability of
$\mu^{n}(t)$
\begin{equation}\label{sta3}
d_{W}(\mu^{n}(t),\mu^{m}(t))\leq \exp(C
t)d_{W}(\mu^{n}_{0},\mu^{m}_{0}),
\end{equation}
where $C=C(W,V)$ is a constant depending only on $W,V$. Thus
$\mu^{n}(t)$ converges to some $\mu(t)$ in $\mathcal{P}_{2}(\Omega)$
as $n\to\infty$. Since $\mu^{n}$ satisfies the energy dissipation
inequality,
\begin{equation*}
\mathcal{E}(\mu^{n}(s))\geq
\mathcal{E}(\mu^{n}(t))+\frac{1}{2}\int_{s}^{t}
|\left(\mu^{n}\right)'|^{2}(r)dr +\frac{1}{2}\int_{s}^{t}
\int_{\Omega} |P_{x}\left(-\nabla W*\mu^{n}(r)(x)-\nabla
V(x)\right)|^{2}d\mu^{n}(r,x)dr,
\end{equation*}
by the lower semicontinuity property, we are able to show that
$\mu(\tacka)$ also satisfies the desired energy dissipation inequality
\begin{equation}\label{EDI2}
\mathcal{E}(\mu(s))\geq \mathcal{E}(\mu(t))+\frac{1}{2}\int_{s}^{t}
|\mu'|^{2}(r)dr +\frac{1}{2}\int_{s}^{t} \int_{\Omega}
|P_{x}\left(-\nabla W*\mu(r)(x)-\nabla V(x)\right)|^{2}d\mu(r,x)dr.
\end{equation}
We then show the chain rule, for $\tilde v(t)$ is the tangent
velocity of $\mu(\tacka)$ at time $t$
\begin{equation}\label{chainrule3}
\frac{d}{dt}\mathcal{E}(\mu(t))=\int_{\Omega} \left\langle
-P_{x}\left(-\nabla W*\mu(t)(x)-\nabla V(x)\right), \tilde
v(t,x)\right\rangle d\mu(t,x),
\end{equation}
which together with the energy dissipation inequality yields that
$\mu(\tacka)$ is a gradient flow with respect to $\mathcal{E}$ and a weak
measure solution to (\ref{conequ}).
\item When $\Omega$ is unbounded and only $\eta$-prox-regular, we first show that the support of
the solutions $\mu^{n}(t)$ grows at most exponentially, i.e.
\begin{equation}\label{expgrowth3}
\supp\left(\mu^{n}(t)\right)\subset B(r(t)),
\end{equation}
for $r(t)=(r_0+1)\exp(Ct)$ given that $\supp(\mu_0)\subset B(r_0)$.
We then show that, given $\supp\left(\mu^{n}(t)\right)$ has the same
growth condition for all $n\in \mathbb{N}$, $\mu^{n}(\tacka)$ still
converges to a locally absolutely continuous curve $\mu(\tacka)$ satisfying
(\ref{EDI2}) and (\ref{chainrule3}). Thus $\mu(\tacka)$ is a weak measure
solution to (\ref{conequ}).
\end{itemize}

\subsection{Outline}
The paper is organized as follows. 

In Section \ref{sec:em}, we show the properties of the projection $P$ and then give the existence results for the discrete projected systems (\ref{disproj3}).

In Section \ref{sec:bddomain}, under the assumption that $\Omega$ is
bounded, we prove the stability of solutions to the discrete
projected systems $\mu^{n}(\tacka)$, i.e. (\ref{sta3}). Thus $\mu^{n}(\tacka)$
converge to an absolutely continuous curve $\mu(\tacka)$. We show that $\mu(\tacka)$ is
curve of maximum slope for the energy $\mathcal{E}$ and moreover a gradient flow solution of
(\ref{conequ}). We then show that $\mu(\tacka)$ is also a weak measure solution and that weak measure solutions satisfy the stability property (\ref{contra3}). At the end of the section, we show that solutions are characterized  by the system of Evolution Variational Inequalities (\ref{EVI3}).

Section \ref{sec:unbddglob} addresses the case of unbounded, convex
$\Omega$ and general initial data $\mu_{0}\in \mathcal{P}_{2}(\Omega)$, that is
Theorem \ref{globexis}.  The proof of Theorem \ref{globexis} is
similar to Theorem \ref{exgrad} and Theorem \ref{stagrad}, we
only concentrate on the key
differences.

Section \ref{sec:unbddomain} is devoted to the case  when $\Omega$ is
unbounded and only $\eta$-prox-regular with $\supp(\mu_{0})$ compact. We show that the support
of the solutions to the discrete projected systems (\ref{disproj3})
satisfy exponential growth condition (\ref{expgrowth3}). By similar
stability results as in Section \ref{sec:bddomain}, $\mu^{n}(\tacka)$ still
converges to a locally absolutely continuous curve $\mu(\tacka)$ and $\mu(\tacka)$
is a solution to (\ref{conequ}) with the desired energy dissipation
(\ref{EDI2}). We then give the proof of the stability result
(\ref{localcontra}) for
solutions with control on growth of supports. We end the section by making
a remark about well-posedness of (\ref{conequ}) with
time-dependent potentials $W, V$.

In the last Section \ref{sec:aggre}, we prove Proposition \ref{single_contr} and discuss the conditions on the shape of the 
domain $\Omega$ such that there exist interaction potentials $W$ for which 
solutions $\mu(\tacka)$ of (\ref{conequ}) aggregate to a singleton (a single delta mass).  

\section{Existence of solutions to discrete systems} \label{sec:em}
In this section, we first show properties of the projection $P$,
in particular the lower semicontinuity and convexity property of $P$.
Then we give the existence result of solutions to the discrete
projected systems (\ref{disproj3}).\\

Recall that the tangent and normal cones $T(\Omega,x)$ and  $N(\Omega,x)$
are closed convex cones by Definition \ref{def:Ccones}.
\begin{prop}\label{condecomp}
Suppose $\Omega$ satisfies (M1) and $x\in\partial\Omega$. Then for
any $v\in\mathbb{R}^{d}$, there exist a unique orthogonal decomposition
$(v_{T},v_{N})\in T(\Omega,x)\times N(\Omega,x)$ of $v$:
$$\langle v_{T},v_{N}\rangle =0 \; \te{ and } \; v=v_{T}+v_{N}.$$
Moreover, $v_{T}=\proj_{T(\Omega,x)}(v)=P_{x}(v),
v_{N}=\proj_{N(\Omega,x)}(v)$.
\end{prop}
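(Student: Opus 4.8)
The plan is to reduce the statement to the classical Moreau decomposition for mutually polar convex cones. Recall the Moreau decomposition theorem: if $C$ is a closed convex cone in $\mathbb{R}^d$ and $C^\circ = \{\xi : \langle \xi, v\rangle \le 0 \ \forall v\in C\}$ is its polar cone, then every $v\in\mathbb{R}^d$ can be written uniquely as $v = \proj_C(v) + \proj_{C^\circ}(v)$ with $\langle \proj_C(v), \proj_{C^\circ}(v)\rangle = 0$. So the proof amounts to checking that in our situation the relevant cones $T(\Omega,x)$ and $N(\Omega,x)$ are precisely a mutually polar pair of closed convex cones.

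First I would recall that by Definition \ref{def:Ccones} the Clarke tangent cone $T^C(\Omega,x)$ and the Clarke normal cone $N^C(\Omega,x)$ are closed convex cones, and that $N^C(\Omega,x)$ is by definition the polar cone of $T^C(\Omega,x)$, i.e. $N^C(\Omega,x) = T^C(\Omega,x)^\circ$. It then remains to identify $N(\Omega,x)$ and $T(\Omega,x)$ with these. Since $\Omega$ satisfies (M1), it is $\eta$-prox-regular, so as noted in the excerpt (citing \cite{CSW,PolRocThi}) we have $N^P(\Omega,x) = N^C(\Omega,x) =: N(\Omega,x)$, and $T(\Omega,x) := T^C(\Omega,x)$. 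Moreover, for a closed convex cone $C$ we have the bipolar identity $(C^\circ)^\circ = C$, so $T(\Omega,x) = N(\Omega,x)^\circ$ as well; hence $T(\Omega,x)$ and $N(\Omega,x)$ form a mutually polar pair.

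Now I would apply Moreau's decomposition with $C = T(\Omega,x)$ and $C^\circ = N(\Omega,x)$: for any $v\in\mathbb{R}^d$ set $v_T = \proj_{T(\Omega,x)}(v)$ and $v_N = \proj_{N(\Omega,x)}(v)$. Moreau's theorem gives $v = v_T + v_N$ and $\langle v_T, v_N\rangle = 0$ directly, and also that $v_T = P_x(v)$ since $P_x = P_{T(\Omega,x)} = \proj_{T(\Omega,x)}$ by the definition \eqref{proj}. For uniqueness: if $v = w_T + w_N$ with $w_T \in T(\Omega,x)$, $w_N\in N(\Omega,x)$ and $\langle w_T, w_N\rangle = 0$, then for any $\xi\in T(\Omega,x)$ one has $\langle v - w_T, \xi - w_T\rangle = \langle w_N, \xi\rangle - \langle w_N, w_T\rangle = \langle w_N,\xi\rangle \le 0$ (using $w_N \in N(\Omega,x) = T(\Omega,x)^\circ$), which is exactly the variational characterization of the projection of $v$ onto the closed convex set $T(\Omega,x)$; hence $w_T = v_T$ and consequently $w_N = v_N$. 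That completes the proof.

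The bulk of this is bookkeeping; the one genuine point of substance is justifying the two cone identities — namely $N^P = N^C$ under prox-regularity (which we are entitled to quote from the excerpt) and the self-duality $T(\Omega,x) = N(\Omega,x)^\circ$ coming from the bipolar theorem for closed convex cones. I expect the main (minor) obstacle is simply being careful that $N(\Omega,x)$ as used here is the \emph{Clarke} normal cone (polar of the tangent cone) rather than just the proximal normal cone, so that the polarity relation is exact rather than only an inclusion; prox-regularity is exactly what collapses the two notions and makes the orthogonal decomposition hold.
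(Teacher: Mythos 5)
Your proposal is correct and follows exactly the route the paper takes: the paper's entire proof is the one-line remark that the proposition ``is a direct consequence of Moreau's decomposition theorem,'' and you have simply filled in the details (identifying $T(\Omega,x)$ and $N(\Omega,x)$ as a mutually polar pair via prox-regularity and the bipolar theorem, then invoking Moreau and checking uniqueness). Nothing is missing.
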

Proposition \ref{condecomp} is a direct consequence of Moreau's
decomposition theorem, see \cite{M,R} for the proof.

\begin{prop}\label{lscproj}
Assume $\Omega$ satisfies (M1), then the map $\Omega\times
\mathbb{R}^{d}\ni(x,v)\mapsto |P_{x}(v)|^{2}$ is lower
semicontinuous and for any fixed $x\in \Omega$, $\mathbb{R}^{d}\ni
v\mapsto |P_{x}(v)|^{2}$ is convex.
\end{prop}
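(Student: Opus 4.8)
The plan is to exploit the orthogonal decomposition from Proposition \ref{condecomp} together with the characterization \eqref{proxequiv} of $\eta$-prox-regularity, which is the only place the geometry of $\Omega$ enters. For the convexity claim, fix $x\in\Omega$. If $x\in\Omega\setminus\partial\Omega$ then $T(\Omega,x)=\mathbb{R}^d$ and $P_x(v)=v$, so $v\mapsto|P_x(v)|^2=|v|^2$ is trivially convex. If $x\in\partial\Omega$, then $P_x$ is the metric projection onto the closed convex cone $T(\Omega,x)$; the map $v\mapsto\proj_{K}(v)$ onto a closed convex set $K$ is $1$-Lipschitz and, more to the point, $v\mapsto |v|^2-|v-\proj_K(v)|^2 = |\proj_K(v)|^2 + 2\langle v-\proj_K(v),\proj_K(v)\rangle$ — I would instead argue directly that $|P_x(v)|^2 = |v|^2 - \dist(v,T(\Omega,x))^2$ via Moreau's decomposition ($v=v_T+v_N$, $\langle v_T,v_N\rangle=0$, so $|v|^2=|v_T|^2+|v_N|^2$ and $|v_N| = \dist(v,T(\Omega,x))$). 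Then convexity of $v\mapsto|P_x(v)|^2$ is equivalent to concavity of $v\mapsto\dist(v,T)^2$ on... no — better: since $T(\Omega,x)$ is a convex cone, one can write $|P_x(v)|^2=\sup_{w\in T(\Omega,x),|w|\le 1}\langle v,w\rangle^2$? That is not quite a supremum of convex functions. The clean route is: for a closed convex cone $K$, $\proj_K$ is positively homogeneous and $|\proj_K(v)|^2=\langle v,\proj_K(v)\rangle$ (since $v-\proj_K(v)=\proj_{K^\circ}(v)\perp\proj_K(v)$ and $\langle \proj_{K^\circ}(v),\proj_K(v)\rangle=0$). Hence $|P_x(v)|^2=\langle v, P_x(v)\rangle = \max_{u\in K}\big(2\langle v,u\rangle - |u|^2\big)$, a supremum over $u$ of affine-in-$v$ functions, hence convex in $v$. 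This identity $|\proj_K v|^2=\max_{u\in K}(2\langle v,u\rangle-|u|^2)$ follows because the maximizer of $2\langle v,u\rangle-|u|^2=|v|^2-|v-u|^2$ over $u\in K$ is exactly $u=\proj_K(v)$.

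For lower semicontinuity of $(x,v)\mapsto|P_x(v)|^2$ on $\Omega\times\mathbb{R}^d$: take $(x_n,v_n)\to(x,v)$ with $x_n\in\Omega$; I want $\liminf_n|P_{x_n}(v_n)|^2\ge|P_x(v)|^2$. Using the variational formula above, $|P_x(v)|^2=\max_{u\in T(\Omega,x)}(2\langle v,u\rangle-|u|^2)$; let $u^\ast=P_x(v)\in T(\Omega,x)$ achieve it. The key geometric input is an \emph{inner semicontinuity} of the tangent cones of an $\eta$-prox-regular set: given $u^\ast\in T(\Omega,x)$ and $x_n\to x$ in $\Omega$, there exist $u_n\in T(\Omega,x_n)$ with $u_n\to u^\ast$. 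Granting this, $|P_{x_n}(v_n)|^2\ge 2\langle v_n,u_n\rangle-|u_n|^2\to 2\langle v,u^\ast\rangle-|u^\ast|^2=|P_x(v)|^2$, giving the $\liminf$ inequality. So the whole matter reduces to establishing this inner semicontinuity of $x\mapsto T(\Omega,x)$.

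The main obstacle — and the step I would spend the most care on — is precisely this inner semicontinuity of the Clarke tangent cones. For $\eta$-prox-regular sets the normal cones $x\mapsto N(\Omega,x)=N^P(\Omega,x)$ are known to be \emph{outer} semicontinuous with the hypomonotonicity/quadratic estimate \eqref{proxequiv}, and the tangent cones, being polar to the normal cones, are correspondingly \emph{inner} semicontinuous; I would either cite the corresponding result from \cite{CSW,PolRocThi} (on the continuity properties of normal and tangent cones of prox-regular sets) or prove it directly: given $u^\ast\in T(\Omega,x)$, by Definition \ref{def:Ccones} applied with the sequence $x_n\to x$ and any $t_n\searrow 0$ one gets approximating vectors, and a diagonal argument over $t_n$ produces the desired $u_n\in T(\Omega,x_n)$ with $u_n\to u^\ast$; alternatively, using \eqref{proxequiv} one checks that $\dist(u^\ast,T(\Omega,x_n))\to 0$ by decomposing $u^\ast$ against $N(\Omega,x_n)$ and bounding the normal component via the prox-regularity inequality and the outer semicontinuity of $N^P$. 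Once the tangent-cone inner semicontinuity is in hand, both assertions of the proposition follow from the variational identity $|P_x(v)|^2=\max_{u\in T(\Omega,x)}\big(2\langle v,u\rangle-|u|^2\big)$ as above, with convexity in $v$ being immediate and lower semicontinuity in $(x,v)$ following from passing to the limit in a single well-chosen competitor $u_n$.
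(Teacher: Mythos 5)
Your proof is correct, but it is organized around a different mechanism than the paper's. The paper works directly with the Moreau decomposition $v=v_T+v_N$ of Proposition \ref{condecomp}: convexity follows by splitting a convex combination of $v^1,v^2$ into its $T$- and $N$-parts and using that $|P_x(w)|=\dist(w,N(\Omega,x))$, while lower semicontinuity is obtained by extracting a convergent subsequence of the normal components $v^n_N$, showing via the ball characterization of prox-regularity that the limit $w_N$ lies in $N(\Omega,x)$ with $\langle v-w_N,w_N\rangle=0$, and comparing $|v-v_N|\le|v-w_N|$. You instead derive the conjugate-type identity $|P_x(v)|^2=\max_{u\in T(\Omega,x)}\left(2\langle v,u\rangle-|u|^2\right)$, which makes convexity immediate (supremum of affine functions of $v$) and reduces lower semicontinuity to inner semicontinuity of the tangent-cone map $x\mapsto T(\Omega,x)$ along sequences in $\Omega$. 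That identity is correct (it equals $|v|^2-\dist(v,T(\Omega,x))^2=|v_T|^2$ by Moreau), and it buys a cleaner, more conceptual proof of convexity that does not even need the cone structure. The cost is that the entire geometric content is pushed into the inner semicontinuity of $T(\Omega,\cdot)$, which you do not fully prove.

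On that key step: of the routes you sketch, the ``diagonal argument'' from Definition \ref{def:Ccones} does not obviously work --- the condition $x_n+t_nv_n\in S$ only exhibits $v_n$ as an approximate tangent direction at $x_n$ at a single scale $t_n$, which does not place $v_n$ (or a nearby vector) in $T(\Omega,x_n)$. The route that does work is the one you mention last: set $u_n=\proj_{T(\Omega,x_n)}(u^\ast)$ and $\xi_n=u^\ast-u_n=\proj_{N(\Omega,x_n)}(u^\ast)$; if $|\xi_n|\not\to 0$, pass to a subsequence with $\xi_n\to\xi\neq 0$, use the prox-regularity ball condition (exactly as the paper does for $v^n_N$) to conclude $\xi\in N(\Omega,x)$, and derive a contradiction from $\langle u^\ast,\xi\rangle=\lim|\xi_n|^2>0$ against $u^\ast\in T(\Omega,x)=N(\Omega,x)^{\circ}$. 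Note that this is, in substance, the same limit argument on normal vectors that the paper performs directly inside its lower semicontinuity proof, so the two approaches ultimately rest on the same geometric fact; yours repackages it as a statement about cone convergence. With that lemma carried out, your plan is complete and correct.
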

\begin{proof}
We first show the lower semicontinuity property. Let
$\{x_{n}\}_{n}\subset \Omega, \{v^{n}\}_{n}\subset \mathbb{R}^{d}$
be such that $\lim_{n\to\infty}x_{n}=x\in \Omega, \lim_{n\to\infty}
v^{n}=v$. If $x_{n}\in \mathring\Omega$ for all $n$ sufficiently
large, then $P_{x_{n}}\left(v^{n}\right)=v^{n}$ and we have
$|P_{x}(v)|^{2}\leq |v|^{2}=\lim_{n\to \infty} |v^{n}|^{2}$. And for
any $x\in \mathring\Omega$, we have $x_{n}\in\mathring\Omega$ for
$n$ sufficiently large, thus
$$\liminf_{n\to\infty}|P_{x_n}\left(v^{n}\right)|^{2}\geq
|P_{x}(v)|^{2}.$$ So we only need to check for $x\in \partial\Omega$
and $\{x_n\}_{n}\subset\partial\Omega$ such that
$\lim_{n\to\infty}x_n=x$. Denote the decomposition of $v^{n}$ as in
Proposition \ref{condecomp} by $$v^{n}=v^{n}_{T}+v^{n}_{N}$$ where
$v^{n}_{T}\in T(\Omega,x_n), v^{n}_{N}\in N(\Omega,x_n)$ and
$\langle v^{n}_{T}, v^{n}_{N}\rangle =0$. For any subsequence, which
we do not relabel, such that there exists $w_{N}\in\mathbb{R}^{d}$
and $\lim_{n\to\infty}v^{n}_{N}=w_{N}$, we claim that $w_{N}\in
N(\Omega,x)$ and $\langle v-w_{N},w_{N}\rangle=0$. Indeed, since
$\Omega$ is $\eta$-prox-regular,
$$B_{\eta} \left(x_n+\eta\frac{v^{n}_{N}}{|v^{n}_{N}|} \right) \cap \Omega=\emptyset.$$
Taking $n\to\infty$ implies
$$B_{\eta} \left(x+\eta\frac{w_{N}}{|w_{N}|} \right)\cap \Omega=\emptyset,$$ which then
implies $w_{N}\in N(\Omega,x)$. Also by taking $n\to \infty$ in
$\langle v^{n}-v^{n}_{N},v^{n}_{N}\rangle =0$ we get $\langle
v-w_{N},w_{N}\rangle=0$. We then know
\begin{align*}
|P_{x}(v)|^{2} & =|v_{T}|^{2}\\ & =|v-v_{N}|^{2} \\ &\leq |v-w_{N}|^{2}\\
&=\lim_{n\to\infty}|v^n-v^{n}_{N}|^{2}\\
& =\lim_{n\to\infty}|P_{x_n}\left(v^n\right)|^{2}
\end{align*}
So $$\liminf_{n\to\infty} |P_{x_n}\left(v^n\right)|^{2}\geq
|P_{x}(v)|^{2}.$$ We turn to the convexity property. For any fixed
$x\in\Omega$, if $x\in \mathring\Omega$ then $P_{x}(v)=v$ for all
$v\in \mathbb{R}^{d}$ and $v\mapsto |v|^{2}$ is convex. Now for
fixed $x\in \partial\Omega$, and any $v^1,v^2\in \mathbb{R}^{d},
0\leq \theta\leq 1$, denote the unique projection of $v^1, v^2$
defined in Proposition \ref{condecomp} by
$$v^{i}=v^{i}_{N}+v^{i}_{T}$$ for $i=1,2$. Then
$$(1-\theta)v^1+\theta v^2= \left((1-\theta)v^{1}_{T}+\theta
v^{2}_{T}\right)+\left((1-\theta) v^{1}_{N}+\theta
v^{2}_{N}\right).$$ Note that $(1-\theta)v^{1}_{T}+\theta
v^{2}_{T}\in T(\Omega,x)$ and $(1-\theta)v^{1}_{N}+\theta
v^{2}_{N}\in N(\Omega,x)$, by Proposition \ref{condecomp} we have
\begin{align*}
|P_{x}\left((1-\theta)v^1+\theta v^2\right)|^{2} & \leq
|(1-\theta)v^{1}_{T}+\theta v^{2}_{T}|^{2}\\ & \leq
(1-\theta)|v^{1}_{T}|^{2}+\theta |v^{2}_{T}|^{2} \\ & =
(1-\theta)|P_{x}\left(v^1\right)|^{2}+\theta
|P_{x}\left(v^2\right)|^{2}.
\end{align*}
Convexity is verified.
\end{proof}

We cite the following result from \cite{ET1,ET2} about the existence
of differential inclusions
\begin{thm}\label{exdiffinclu}
Assume that $S$ is $\eta$-prox-regular as defined in Definition \ref{def:proxset} and $F:\mathbb{R}^{d}\ni
x\mapsto F(x)\in\mathbb{R}^{d}$ is a continuous function with at
most linear growth, i.e., there exists some constant $C>0$ such that
$$|F(x)|\leq C(1+|x|).$$ Then the differential inclusion
\begin{equation}\label{diffinclu}
  \left\{
   \begin{aligned}
   -\dot{x}(t) &\in N(S,x(t))+F(x(t)) \te{ a.e. } t\geq 0,  \\
   x(0) &= x_0\in S.  \\
   \end{aligned}
  \right.
  \end{equation}
  has at least one locally absolutely continuous solution.
\end{thm}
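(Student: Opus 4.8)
The plan is to construct approximate solutions by a catching-up (Moreau time-stepping) scheme and then pass to the limit, using the $\eta$-prox-regularity of $S$ in the quantitative form \eqref{proxequiv}. Fix a horizon $T>0$, partition $[0,T]$ into subintervals of length $h=T/m$ with nodes $t_k=kh$, set $x^h_0=x_0$, and define inductively $x^h_{k+1}\in \proj_S\bigl(x^h_k-hF(x^h_k)\bigr)$. The linear growth of $F$ and a discrete Gronwall estimate give an a priori bound $|x^h_k|\le R(T)$ independent of $h$ and $k$, confining the scheme to a fixed compact set; on a neighbourhood of $S\cap \overline{B_{R(T)+1}}$ the projection onto $S$ is single-valued by $\eta$-prox-regularity, so the iteration is well-defined once $h$ is small enough that $|hF(x^h_k)|<\eta$. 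From the variational characterization of the metric projection one has $x^h_k-hF(x^h_k)-x^h_{k+1}\in N(S,x^h_{k+1})$, i.e. $-\tfrac{x^h_{k+1}-x^h_k}{h}-F(x^h_k)\in N(S,x^h_{k+1})$, and since the tangential projection of $-F(x)$ has norm at most $|F(x)|$ this yields a uniform Lipschitz-in-time bound $|x^h_{k+1}-x^h_k|\le h\sup_{\overline{B_{R(T)}}}|F|$.

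Let $x^h$ be the piecewise-affine interpolant of the nodes and let $\bar x^h$, $\tilde x^h$ be the piecewise-constant interpolants at the right and left nodes. The uniform Lipschitz bound and Arzel\`a--Ascoli produce a subsequence with $x^h\to x$ uniformly on $[0,T]$ (with $x$ Lipschitz) and $\dot x^h\rightharpoonup \dot x$ weakly in $L^2(0,T;\mathbb{R}^d)$; moreover $\bar x^h,\tilde x^h\to x$ uniformly and $x(t)\in S$ for all $t$, since $S$ is closed and the nodes lie in $S$. It remains to verify $-\dot x(t)\in N(S,x(t))+F(x(t))$ a.e. Writing $z^h(t):=-\dot x^h(t)-F(\tilde x^h(t))$, the projection identity gives $z^h(t)\in N(S,\bar x^h(t))$ for a.e.\ $t$, with $\sup_h\|z^h\|_{L^\infty(0,T)}\le C$. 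By \eqref{proxequiv}, for every $y\in S$, $\langle z^h(t),y-\bar x^h(t)\rangle\le \tfrac{|z^h(t)|}{2\eta}|y-\bar x^h(t)|^2\le \tfrac{C}{2\eta}|y-\bar x^h(t)|^2$. Multiplying by a nonnegative $\varphi\in C_c(0,T)$, integrating, and passing to the limit---using weak $L^2$ convergence of $\dot x^h$ against the strongly convergent factor $\varphi(y-\bar x^h)$, continuity of $F$, and uniform convergence of $\bar x^h,\tilde x^h$---gives $\langle -\dot x(t)-F(x(t)),y-x(t)\rangle\le \tfrac{C}{2\eta}|y-x(t)|^2$ for all $y\in S$ and a.e.\ $t$. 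Since this holds with a constant independent of the vector's norm, the standard proximal-normal characterization (and $N^P=N^C=N$ for prox-regular sets) yields $-\dot x(t)-F(x(t))\in N(S,x(t))$. Taking $x_0\in S$ gives the initial condition, and concatenating solutions over $[0,T],[T,2T],\dots$ produces a locally absolutely continuous solution on $[0,\infty)$.

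The main obstacle is the passage to the limit in the inclusion: because $S$ is only prox-regular, $N(S,\cdot)$ is not a monotone operator and maximal monotonicity is unavailable, so one must instead quantify the defect of monotonicity through \eqref{proxequiv}, which survives the limit precisely because the normal components $z^h$ are uniformly bounded. A secondary, routine point is the bookkeeping needed to keep $\proj_S$ single-valued along the scheme: the a priori bound must be established first so that the iterates remain in the tube where $\eta$-prox-regularity guarantees a well-defined projection, and $h$ must be chosen accordingly.
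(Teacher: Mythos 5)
Your proof is correct and follows the same overall route as the paper: a catching-up (Moreau) time-stepping scheme, a priori bounds from the linear growth of $F$, uniform Lipschitz bounds on the interpolants, Arzel\`a--Ascoli plus weak $L^2$ compactness of the derivatives, and a limit passage in the normal-cone inclusion. The one place you genuinely diverge is that limit passage, which is the crux. The paper applies Mazur's lemma to the weakly convergent normal components $\Pi_n=\dot x_n+Z_n$ and then invokes a graph-closedness property of $N(S,\cdot)$ for prox-regular sets (citing Proposition~2.1 of the reference on sweeping processes). You instead test the discrete inclusion against an arbitrary $y\in S$ through the quantitative hypomonotonicity inequality \eqref{proxequiv}, pass to the limit by pairing the weakly convergent $\dot x^h$ with the strongly convergent factor, and recover $-\dot x(t)-F(x(t))\in N(S,x(t))$ from the characterization of the proximal normal cone as the set of vectors satisfying a quadratic upper bound over $S$. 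This buys self-containedness: the cited closedness lemma is essentially a packaged version of exactly the estimate you run by hand, and your version makes visible why the uniform $L^\infty$ bound on the normal components is what saves the argument in the absence of monotonicity. Two small repairs: the limit inequality you derive holds for a.e.\ $t$ with an exceptional null set depending on $y$, so you should first restrict to a countable dense subset of $S$ and use continuity in $y$ before invoking the proximal characterization; and the discrete step bound should read $|x^h_{k+1}-x^h_k|\le 2h\,|F(x^h_k)|$ (compare the distance from $x^h_k-hF(x^h_k)$ to its projection with its distance to the point $x^h_k\in S$), the factor $2$ being harmless for everything that follows.
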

Note that the theorems, for example Theorem 5.1 from \cite{ET2}, are
more general than Theorem \ref{exdiffinclu}. However, we only need
the simplified version for our purpose. We also notice that
(\ref{diffinclu}) implies that $x(t)\in S$ for all $t\geq 0$.
Indeed, since $N(S,x)=\emptyset$ for all $x\not\in S$ we know
$x(t)\in S$ for a.e. $t\geq 0$. Then the continuity of $x(t)$ and
the fact that $S$ is closed imply that $x(t)\in S$ for all $t\geq
0$. For completeness, we give a sketch of proof here.
\begin{proof}
For $T<\frac{1}{2C}$ where $C$ is constant in the growth condition
of $F$. For $n\in \mathbb{N}$, take the partition
$0=t^{n}_{0}<t^{n}_{1}<...<t^{n}_{n}= T$ and define
$\delta^{n}_{i}=t^{n}_{i+1}-t^{n}_{i}, x^{n}_{0}=x_0,
Z^{n}_{0}=F(x^{n}_0)$. Then define iteratively for $0\leq i\leq n-1$
\begin{equation*}
x^{n}_{i+1}=\proj_{S}\left(x^{n}_{i}-\delta^{n}_{i}Z^{n}_{i}\right)
\end{equation*}
and
\begin{equation*}
Z^{n}_{i+1}=F(x^{n}_{i+1}).
\end{equation*}
Note that we have then
\begin{equation*}
\|x^{n}_{i+1}\|\leq \|x^{n}_{i}\|+2\delta^{n}_{i}\|Z^{n}_{i}\|
\end{equation*}
and
\begin{equation*}
\|Z^{n}_{i}\|\leq C\left(1+\|x^{n}_{i}\|\right).
\end{equation*}
Thus
\begin{align*}
\|x^{n}_{i+1}\| & \leq \|x_{0}\|+\sum_{j=0}^{i} 2\delta^{n}_{j}C(1+\|x^{n}_{j}\|)\\
& \leq \|x_0\|+2CT(1+\max_{0\leq j\leq i}\|x^{n}_{j}\|),
\end{align*}
which implies
\begin{equation*}
\max_{0\leq i\leq n}\|x^{n}_{i}\|\leq \|x_0\|+ 2CT (1+\max_{0\leq
i\leq n}\|x^{n}_{i}\|).
\end{equation*}
Since $2CT<1$ we have uniformly in $n$
\begin{equation*}
\max_{0\leq i\leq n}\|x^{n}_{i}\|\leq
\frac{\|x_0\|+2CT}{1-2CT}<\infty,
\end{equation*}
and \begin{equation*}
 \max_{0\leq i\leq n}\|Z^{n}_{i}\|\leq
C(1+\max_{0\leq i\leq n}\|x^{n}_{i}\|)<\infty.
\end{equation*}
We now define the approximation solution by
\begin{equation*}
x_{n}(t)=u^{n}_{i}+\frac{x^{n}_{i+1}-x^{n}_{i}+\delta^{n}_{i}Z^{n}_{i}}{\delta^{n}_{i}}-(t-t^{n}_{i})Z^{n}_{i},
\end{equation*}
for $t^{n}_{i}\leq t<t^{n}_{i+1}$. Notice that $x_n$ can also be
written as
\begin{equation}\label{intform}
x_{n}(t)=x_0+\int_{0}^{t}[\Pi_{n}(s)-Z_{n}(s)]ds
\end{equation}
where
\begin{equation*}
\Pi_{n}(t)=\sum_{i=0}^{n}\frac{
x^{n}_{i+1}-x^{n}_{i}+\delta^{n}_{i}Z^{n}_{i}}{\delta^{n}_{i}}\chi_{(t^{n}_{i},t^{n}_{i+1}]}(t)
\end{equation*}
and $Z_{n}(t)=Z^{n}_{i}$ for $t^{n}_{i}\leq t<t^{n}_{i+1}$. We have
for a.e. $t\in [t^{n}_{i},t^{n}_{i+1})$
\begin{equation*}
\dot{x}_{n}(t)+Z_{n}(t)=\Pi_{n}(t)\in N\left(S,
x_{n}(t^{n}_{i})\right).
\end{equation*}
Since $\|\Pi_{n}(t)\|\leq \|Z^{n}_{i}\|$ for $t\in
(t^{n}_{i},t^{n}_{i+1}]$, we know there exists a subsequence of $n$,
which we do not relabel, such that
$$\Pi_{n}\rightharpoonup\Pi,\quad  Z_{n}\rightharpoonup Z\qquad \te{as } n \to \infty $$ 
weakly in $L^{2}[0,T]$. We then have by (\ref{intform}) that $x_{n}$ converges
locally uniformly to $x$ with
\begin{equation*}
x(t)=x_0+\int_{0}^{t}[\Pi(s)-Z(s)]ds.
\end{equation*}
We now claim that $x(t)$ is a solution to the differential inclusion
on $[0,T]$. First we check that $x(t)\in S$ for all $t\in [0,T]$.
Since
\begin{equation*}
\|x_{n}(t^{n}_i)-x(t)\|\leq \|x_{n}(t)-x(t)\|+c|t^{n}_{i}-t|,
\end{equation*}
$x(t)=\lim_{n\to\infty} x_{n}(t^{n}_{i})\in S$.
We then verify that $\dot{x}(t)+Z(t)\in - N(S,x(t))$ for a.e. $t\in
[0,T]$. Since $\dot{x}_{n}+Z_{n}=\Pi_{n}\rightharpoonup \Pi$ weakly
in $L^{2}\left([0,T]\right)$ and $\Pi_{n}(t)\in
N\left(S,x_{n}(t^{n}_{i})\right)$ for $t^{n}_{i}<t\leq t^{n}_{i+1}$,
by Mazur's lemma, for a.e. $t\in [0,T]$
\begin{equation*}
\dot{x}(t)+Z(t)\in \bigcap_{n}\{\dot{x}_{k}(t)+Z_{k}(t):k\geq n\}.
\end{equation*}
Then by Proposition 2.1 from \cite{ET2}, we know for a.e. $t\in
[0,T]$,
\begin{equation*}
\dot{x}(t)+Z(t)\in N(S,x(t)).
\end{equation*}
Now we only need to check that $Z(t)=F(x(t))$. We know that
$Z_{n}(t)=F(x^{n}(t^{n}_{i})$ for $t^{n}_{i}\leq t<t^{n}_{i+1}$.
Define $\tilde u_{n}$ by $\tilde x_{n}(t)=x^{n}(t^{n}_{i})$ for
$t^{n}_{i}\leq t<t^{n}_{i+1}$ and note
$Z_{n}(t)=F(x^{n}(t^{n}_{i})=F(\tilde x_{n}(t))$. Then $\tilde
x_{n}$ converges locally uniformly to $x$. Together with the fact
that $F$ is continuous, $F(\tilde x_{n})$ converges to $F(x)$ in
$L^{2}\left([0,T]\right)$. Since it is direct to check $Z_{n}$
converges weakly to $Z$ in $L^{2}\left([0,T]\right)$, we get
$Z(t)=F(x(t))$ for a.e. $t\in [0,T]$. The claim is proved.
\end{proof}
 We now show
that the solutions for the differential inclusions are actually
solutions for the projected systems.
\begin{lem}\label{inclu=proj}
Assume that $S$ is $\eta$-prox-regular by Definition \ref{def:proxset} and $x(t)$ is a locally
absolutely continuous solution to the differential inclusion
(\ref{diffinclu}). Then
\begin{equation}
\dot{x}(t)=P_{x(t)}\left(-F\left(x(t)\right)\right) \te{ a.e. }
t\geq 0.
\end{equation}
\end{lem}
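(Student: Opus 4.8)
The plan is to argue pointwise at a.e.\ $t\ge 0$, namely at those $t$ for which $x(\cdot)$ is differentiable and the inclusion \eqref{diffinclu} holds; since $x$ is locally absolutely continuous and $x(s)\in S$ for every $s$, this set has full measure. Fix such a $t$ and set $\zeta:=-\dot x(t)-F(x(t))$, so that $\zeta\in N(S,x(t))$ and $-F(x(t))=\dot x(t)+\zeta$. If $x(t)\in\mathring S$ then $N(S,x(t))=\{0\}$, hence $\zeta=0$ and $T(S,x(t))=\mathbb R^{d}$, so both sides of the asserted identity equal $-F(x(t))$; thus we may assume $x(t)\in\partial S$.

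The first step is to show $\langle\zeta,\dot x(t)\rangle=0$, and this is where $\eta$-prox-regularity is used. Since $\zeta\in N(S,x(t))=N^{P}(S,x(t))$ and $x(t+h)\in S$ for every $h$, the characterization \eqref{proxequiv} gives
\begin{equation*}
\langle\zeta,\,x(t+h)-x(t)\rangle\le\frac{|\zeta|}{2\eta}\,|x(t+h)-x(t)|^{2}.
\end{equation*}
Dividing by $h$ and letting $h\to 0^{+}$, the right-hand side tends to $0$ because $|x(t+h)-x(t)|^{2}/h\to 0$, which yields $\langle\zeta,\dot x(t)\rangle\le 0$; dividing by $h$ and letting $h\to 0^{-}$ reverses the inequality and gives $\langle\zeta,\dot x(t)\rangle\ge 0$. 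Hence $\langle\zeta,\dot x(t)\rangle=0$.

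The second step is to show $\dot x(t)\in T(S,x(t))$. Since $n\bigl(x(t+\tfrac1n)-x(t)\bigr)\to\dot x(t)$ with $x(t+\tfrac1n)\in S$, the vector $\dot x(t)$ lies in the contingent (Bouligand) tangent cone of $S$ at $x(t)$; because $\eta$-prox-regular sets are Clarke regular, this contingent cone coincides with the Clarke tangent cone $T(S,x(t))$ (see \cite{CSW,PolRocThi}), so $\dot x(t)\in T(S,x(t))$.

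Combining the three facts $\dot x(t)\in T(S,x(t))$, $\zeta\in N(S,x(t))$, and $\langle\dot x(t),\zeta\rangle=0$, we recognize $-F(x(t))=\dot x(t)+\zeta$ as exactly the orthogonal decomposition of $-F(x(t))$ furnished by Moreau's theorem (Proposition \ref{condecomp}). By the uniqueness asserted there, the tangential part equals $\dot x(t)$, i.e.\ $\dot x(t)=P_{x(t)}(-F(x(t)))$, which is the claim. The main obstacle is precisely the second step: a priori $\dot x(t)$ is only known to lie in the contingent cone, which for a general closed set can be strictly larger than the Clarke tangent cone defining $N(S,x(t))$; it is exactly the prox-regularity (equivalently, Clarke regularity) of $S$ that forces the two to agree and makes the Moreau decomposition applicable.
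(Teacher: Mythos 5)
Your proof is correct and follows essentially the same route as the paper: identify $\dot x(t)$ as an element of the contingent cone, use tangential regularity of prox-regular sets to conclude $\dot x(t)\in T(S,x(t))$, establish $\langle \dot x(t),\zeta\rangle=0$ for the normal component $\zeta$, and invoke the Moreau decomposition of Proposition \ref{condecomp}. The only (harmless) difference is that you obtain the orthogonality from the quantitative prox-regularity inequality \eqref{proxequiv} applied to $y=x(t+h)$, whereas the paper gets it from the two-sided tangency $\dot x(t)\in K(S,x(t))\cap\bigl(-K(S,x(t))\bigr)$ together with the polarity defining $N(S,x(t))$.
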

\begin{proof}
Since $S$ is $\eta$-prox-regular, it is tangentially regular,
that is
$$T(S,x)=K(S,x)$$
where $T(S,x)$ is defined in Definition \ref{def:Ccones} and $K(S,x)$ is the contingent cone defined as
$$K(S,x)=\{v\in\mathbb{R}^{d}: \exists t_n\searrow 0 \;  \exists
v_{n}\to v \te{ s.t. }  (\forall n) \;  x+t_{n}v_{n}\in S\}.$$
We refer to \cite{Bou} for the details. Now note that for a.e. $t$
$$\dot{x}(t)=\lim_{h\to 0^{+}} \frac{x(t+h)-x(t)}{h}\in K(S,x(t))$$
and
$$\dot{x}(t)=\lim_{h\to 0^{-}}\frac{x(t+h)-x(t)}{h}\in -K(S,x(t)).$$
Thus  $\langle\dot{x}(t),n(x(t))\rangle =0$ for any
$n(x(t))\in N(S,x(t))$. From the differential inclusion
(\ref{diffinclu}),we know that $-F(x(t))=\dot{x}(t)+n(x(t))$ for
some $n(x(t))\in N(S,x(t))$. Together with fact that $\dot{x}(t)\in
T(S,x(t))$ and $\langle \dot{x}(t), n(x(t))\rangle =0$, by
Proposition \ref{condecomp}
\begin{equation*}
\dot{x}(t)=P_{x(t)}\left(-F\left(x(t)\right)\right),
\end{equation*}
as claimed.
\end{proof}
We turn to the existence of solutions to the discrete projected
system \eqref{disproj3}, which we write as
\begin{equation}
  \left\{
   \begin{aligned}
   \dot{x}_{i}(t) & = P_{x_{i}(t)}\left(v(x^{n}(t))\right),  \\
   x_{i}(0) &= x_{i}\in S.  \\
   \end{aligned}
  \right.
  \end{equation}
for $i=1,\cdots,n$. For that purpose we apply Theorem \ref{exdiffinclu}
and  Lemma \ref{inclu=proj}
 for $S=\Omega^{n}$ and
$x_0=(x_{1},\cdots,x_{n})$ with
$F(x)=\left(-v_{1}(x(t)),\cdots,-v_{n}(x(t))\right)$, where
$v_{i}(x(t))=-\nabla W*\mu(t)(x_{i}(t))-\nabla
V(x_{i}(t))=-\sum_{j=1}^{n} m_j \nabla
W\left(x_{i}(t)-x_{j}(t)\right)-\nabla V(x_{i}(t))$. To do that, we
first check that $\Omega^n$ is $\eta$-prox-regular.
\begin{prop}
If $\Omega\subset\mathbb{R}^{d}$ is $\eta$-prox-regular by Definition \ref{def:proxset}, then
$$\Omega^{n}=\{(x_1,\cdots,x_n)\, :\; x_{i}\in \Omega, \;\, i = 1, \dots,n \}$$ is
$\eta$-prox-regular; Also for any $x=(x_1,\cdots,x_n)\in \Omega^{n}$ we
have $$N(\Omega^{n},x)=N(\Omega,x_1)\times\cdots\times N(\Omega,x_n).$$
\end{prop}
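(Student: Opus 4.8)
The plan is to reduce everything to the coordinatewise behaviour of the metric projection. For $y=(y_1,\dots,y_n)$ one has $\operatorname{dist}(y,\Omega^n)^2=\sum_{i=1}^n\operatorname{dist}(y_i,\Omega)^2$, and a point $z\in\Omega^n$ realizes this minimum if and only if each $z_i$ realizes $\operatorname{dist}(y_i,\Omega)$; hence the projection splits, $P_{\Omega^n}(y)=P_\Omega(y_1)\times\cdots\times P_\Omega(y_n)$. Consequently, for $x=(x_1,\dots,x_n)\in\Omega^n$ and $v=(v_1,\dots,v_n)$, the relation $x\in P_{\Omega^n}(x+\alpha v)$ is equivalent to $x_i\in P_\Omega(x_i+\alpha v_i)$ for all $i$, with the \emph{same} $\alpha$. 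Using $\alpha>0$ in the definition of the proximal normal cone, this immediately gives the inclusion $N^P(\Omega^n,x)\subset N^P(\Omega,x_1)\times\cdots\times N^P(\Omega,x_n)$ (no prox-regularity needed for this direction).

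For the reverse inclusion I would use $\eta$-prox-regularity of $\Omega$ to synchronize the parameters: if $u\in N(\Omega,z)$ with $|u|=1$, then Definition \ref{def:proxset} gives $B_\eta(z+\eta u)\cap\Omega=\emptyset$, and since $B_r(z+ru)\subset B_\eta(z+\eta u)$ whenever $0<r\le\eta$ (the centres being $\eta-r$ apart), we get $z\in P_\Omega(z+ru)$ for every such $r$. Thus, given $v_i\in N^P(\Omega,x_i)=N(\Omega,x_i)$ for all $i$, set $\alpha=\eta/\max_j|v_j|$ (the case $v=0$ being trivial); then $\alpha|v_i|\le\eta$, so $x_i\in P_\Omega\bigl(x_i+\alpha|v_i|\,\tfrac{v_i}{|v_i|}\bigr)=P_\Omega(x_i+\alpha v_i)$ for every $i$ with $v_i\neq0$, and trivially when $v_i=0$. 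Hence $x\in P_{\Omega^n}(x+\alpha v)$ and $v\in N^P(\Omega^n,x)$, which proves $N^P(\Omega^n,x)=\prod_{i=1}^n N^P(\Omega,x_i)$.

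Next I would show $\Omega^n$ is $\eta$-prox-regular by checking the equivalent characterization \eqref{proxequiv}. Let $y,x\in\Omega^n$ and $v\in N^P(\Omega^n,x)$; by the previous step $v_i\in N^P(\Omega,x_i)=N(\Omega,x_i)$, so \eqref{proxequiv} applied in $\Omega$ in each coordinate gives $\langle v_i,y_i-x_i\rangle\le\frac{|v_i|}{2\eta}|y_i-x_i|^2$ (trivially so when $x_i\in\mathring\Omega$, since then $v_i=0$). Summing over $i$ and using $|v_i|\le|v|$,
\[
\langle v,y-x\rangle=\sum_{i=1}^n\langle v_i,y_i-x_i\rangle\le\frac{1}{2\eta}\sum_{i=1}^n|v_i|\,|y_i-x_i|^2\le\frac{|v|}{2\eta}\sum_{i=1}^n|y_i-x_i|^2=\frac{|v|}{2\eta}|y-x|^2,
\]
which is exactly \eqref{proxequiv} for $\Omega^n$; equivalently one can verify the ball definition directly, since for $|v|=1$ the disjointness of $B_\eta(x+\eta v)$ from $\Omega^n$ reduces, via the distance splitting, to $\operatorname{dist}(x_i+\eta v_i,\Omega)\ge\eta|v_i|$, which again follows from the nested-ball inclusion together with $\eta$-prox-regularity of $\Omega$ since $\eta|v_i|\le\eta$.

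Finally, since both $\Omega$ and $\Omega^n$ are now known to be $\eta$-prox-regular, their proximal and Clarke normal cones coincide, so the notation $N(\cdot,\cdot)$ is justified and $N(\Omega^n,x)=N^P(\Omega^n,x)=\prod_{i=1}^n N^P(\Omega,x_i)=\prod_{i=1}^n N(\Omega,x_i)$, as claimed. The only genuinely delicate point is the reverse inclusion in the normal-cone identity — aligning the proximal-normal parameters $\alpha_i$ across the $n$ coordinates to a single $\alpha$ — and this is precisely where $\eta$-prox-regularity of $\Omega$ (furnishing the uniform admissible parameter $\eta/|v_i|$) enters; everything else is routine bookkeeping around the coordinatewise splitting of the metric projection and of the squared distance.
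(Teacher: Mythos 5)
Your proof is correct, and at its core it follows the same route as the paper: split the metric projection coordinatewise, deduce the product structure of the proximal normal cone, and verify $\eta$-prox-regularity of $\Omega^n$ by summing the coordinatewise inequalities \eqref{proxequiv}. The one place where you genuinely add something is the reverse inclusion $\prod_i N^P(\Omega,x_i)\subset N^P(\Omega^n,x)$. The paper disposes of the normal-cone identity by a chain of equivalences ending in ``$x_i\in P_\Omega(x_i+\alpha v^i)$ for all $i$ $\Leftrightarrow$ $v_i\in N(\Omega,x_i)$ for all $i$,'' which silently assumes that the proximal parameters $\alpha_i$ furnished by the definition of $N^P(\Omega,x_i)$ can be taken equal across the $n$ coordinates; as stated, the definition only provides some $\alpha_i>0$ for each $i$ separately. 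You identify exactly this issue and close it correctly: for an $\eta$-prox-regular set the nested-ball observation $B_r(z+ru)\subset B_\eta(z+\eta u)$ for $0<r\le\eta$ shows that every $r\in(0,\eta]$ is an admissible proximal parameter for a unit normal $u$, so $\alpha=\eta/\max_j|v_j|$ works simultaneously for all coordinates. This uses the prox-regularity of $\Omega$ in an essential way (for a general closed set the product formula for $N^P$ can indeed fail for lack of a common $\alpha$), so the extra care is not pedantry. The remaining steps --- the forward inclusion, the summation argument for \eqref{proxequiv}, and the passage from $N^P$ to $N$ via the coincidence of proximal and Clarke cones on prox-regular sets --- match the paper's argument, and your alternative ball-based verification of prox-regularity of $\Omega^n$ via $\dist(x_i+\eta v_i,\Omega)\ge\eta|v_i|$ is a correct, if redundant, bonus.
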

\begin{proof}
To see $\Omega^{n}$ is also $\eta$-prox-regular, first it is direct
that $\Omega^{n}$ is a closed set. Now for any $x=(x_1,\cdots,x_n)\in
\partial \Omega^{n}$ and $v=(v^1,\cdots,v^n)\in N(\Omega^{n},x)$, by
Definition \ref{def:Ccones} there exists $\alpha>0$ such that
$$x\in P_{\Omega^{n}}\left(x+\alpha v\right),$$ which implies
$$x_i\in P_{\Omega}\left(x_i+\alpha v^i\right)$$ for $1\leq i\leq
n$. By the equivalent definition of $\eta$-prox-regularity of $\Omega$
(\ref{proxequiv}), we then have $$\langle
v^i, y_i-x_i\rangle \leq \frac{|v^{i}|}{2\eta}|y_i-x_i|^{2}$$ for
any $y_{i}\in \Omega$. Thus
\begin{align*}
\langle v,y-x\rangle & = \sum_{i=1}^{n}\langle v^i,y_i-x_i\rangle \\
& \leq \sum_{i=1}^{n} \frac{|v^{i}|}{2\eta}|y_i-x_i|^{2}\\ & \leq
\frac{|v|}{2\eta}|y-x|^{2},
\end{align*}
for any $y=(y_1,\cdots,y_n)\in \Omega^{n}$. Thus $\Omega^{n}$ is
$\eta$-prox-regular by (\ref{proxequiv}). We now turn to the relations between the normal
cones. For $x=(x_1,\cdots,x_n)\in\Omega^{n}$ and $v=(v^1,\cdots,v^n)$
\begin{align*}
v\in N(\Omega^{n},x) & \Leftrightarrow \exists \alpha>0\te{ s.t. }
x\in P_{\Omega^{n}}\left(x+\alpha v\right)\\ & \Leftrightarrow
x_{i}\in P_{\Omega}\left(x_i+\alpha v^i\right),\;\; i=1, \dots, n
\\ & \Leftrightarrow v_{i}\in N(\Omega,x_i), \;\; i=1, \dots, n .
\end{align*}
Thus $N(\Omega^n,x)=N(\Omega,x_1)\times\cdots\times N(\Omega,x_n)$.
\end{proof}
Now we give the main result regarding the existence of solutions to
projected discrete systems.
\begin{thm} \label{existode}
Assume that $\Omega$ is $\eta$-prox-regular by Definition \ref{def:proxset}. If either $\Omega$ is
bounded and $W,V$ satisfy (A1)-(A2) or $\Omega$ is unbounded and
$W,V$ satisfy (GA2) and (GA4) i.e. (LA2) and (LA4), then for any $n\in\mathbb{N}$ and any
$(x_1,\cdots,x_n)\in \Omega^{n},(m_1,\cdots,m_n)\in \mathbb{R}^{n}$ with
$m_i\geq 0,\sum_{i=1}^{n}m_i=1$, the projected discrete system
\begin{equation}
  \left\{
   \begin{aligned}
   \dot{x}_{i}(t) &=P_{x_{i}(t)}\left(v_{i}(x(t))\right),  \\
   x_{i}(0) &= x_{i}\in \Omega,  \\
   \end{aligned}
  \right.
  \end{equation}
for $i=1,\cdots,n$, where $v_{i}(x(t))=-\nabla W*\mu(t)(x_i(t))-\nabla
V(x_i(t))=-\sum_{j=1}^{n}m_{j}\nabla W(x_i(t)-x_j(t))-\nabla
V(x_i(t))$, has a locally absolutely continuous solution.
\end{thm}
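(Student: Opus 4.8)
The plan is to realize the projected discrete system as the trajectory system of a sweeping-process differential inclusion on the product domain $\Omega^{n}$, and then to assemble the already-established tools: Theorem \ref{exdiffinclu} for existence of the inclusion, Lemma \ref{inclu=proj} to identify the inclusion solution with a solution of the projected system, and the proposition just proved, which guarantees that $\Omega^{n}$ is $\eta$-prox-regular with $N(\Omega^{n},x)=N(\Omega,x_{1})\times\cdots\times N(\Omega,x_{n})$.

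First I would fix $S=\Omega^{n}$ and set $F(x)=(-v_{1}(x),\dots,-v_{n}(x))$ with $v_{i}(x)=-\sum_{j}m_{j}\nabla W(x_{i}-x_{j})-\nabla V(x_{i})$; since $W,V\in C^{1}(\mathbb{R}^{d})$, the map $F$ is continuous on $(\mathbb{R}^{d})^{n}$. The only hypothesis of Theorem \ref{exdiffinclu} that still needs to be arranged is at-most-linear growth of $F$. In the unbounded case this is immediate from (GA2) and (GA4) (equivalently (LA2),(LA4)): using $\sum_{j}m_{j}=1$ and $|x_{i}-x_{j}|\le 2|x|$ one obtains $|F(x)|\le C'(1+|x|)$ with $C'$ depending only on $C$ and $n$. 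In the bounded case, $\nabla W$ and $\nabla V$ need not grow linearly on all of $\mathbb{R}^{d}$, so I would instead work with a truncation $\tilde F$ that coincides with $F$ on a large ball $B_{R}^{\,n}\supset\Omega^{n}$ (pick $R$ with $\Omega\subset B_{R}$) and is bounded, hence of linear growth, outside; continuity is preserved by composing with a Lipschitz radial cutoff of the argument. Because any solution produced by Theorem \ref{exdiffinclu} remains in $\Omega^{n}\subset B_{R}^{\,n}$ (as $N(\Omega^{n},\cdot)=\emptyset$ off $\Omega^{n}$ and $\Omega^{n}$ is closed), such a solution also solves the inclusion driven by $F$ itself, so the truncation is harmless.

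Applying Theorem \ref{exdiffinclu} to the pair $(\Omega^{n},\tilde F)$ (respectively $(\Omega^{n},F)$ in the unbounded case) yields a locally absolutely continuous curve $x(\cdot)$ with $-\dot x(t)\in N(\Omega^{n},x(t))+F(x(t))$ for a.e.\ $t\ge 0$. Lemma \ref{inclu=proj} then gives $\dot x(t)=P_{T(\Omega^{n},x(t))}\bigl(-F(x(t))\bigr)$ for a.e.\ $t$. To read this off componentwise, note that from the proposition $N(\Omega^{n},x)=\prod_{i}N(\Omega,x_{i})$, and since for prox-regular sets the tangent cone is the polar of the normal cone (both being closed convex cones), polarity of a product yields $T(\Omega^{n},x)=\prod_{i}T(\Omega,x_{i})$; consequently the metric projection onto $T(\Omega^{n},x)$ is the product of the projections $P_{x_{i}}=P_{T(\Omega,x_{i})}$, because the squared distance to a product cone separates. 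Taking the $i$-th component and recalling $F_{i}=-v_{i}$ gives exactly $\dot x_{i}(t)=P_{x_{i}(t)}\bigl(v_{i}(x(t))\bigr)$ for a.e.\ $t\ge0$, with $x_{i}(0)=x_{i}\in\Omega$, which is the desired solution.

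In short, the argument is essentially bookkeeping on top of the earlier results. I do not expect a genuine obstacle; the only point requiring a little care is the bounded case, where one must first truncate $F$ to meet the global linear-growth hypothesis of Theorem \ref{exdiffinclu} and only afterward observe that the constructed trajectory never leaves $\Omega^{n}$, so the truncation does not affect the conclusion.
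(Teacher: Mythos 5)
Your proposal is correct and follows essentially the same route as the paper: apply Theorem \ref{exdiffinclu} to $S=\Omega^{n}$ with the field $F$ (extending/truncating $F$ off $\Omega^{n}$ in the bounded case, exactly as the paper does), use the prox-regularity of $\Omega^{n}$ and the product structure of the normal cone, and then invoke the argument of Lemma \ref{inclu=proj} to convert the inclusion into the projected system. The only cosmetic difference is that the paper first splits the inclusion into componentwise inclusions on $\Omega$ and applies the lemma's argument to each component, whereas you apply the lemma on $\Omega^{n}$ and then factor the tangent-cone projection over the product; both steps rest on the same facts and are equally valid.
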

\begin{proof}
We just need to check the conditions for Theorem \ref{exdiffinclu}
to apply. We already know that $\Omega^{n}$ is $\eta$-prox-regular.
If $\Omega$ is bounded and $W,V$ satisfy (A1)-(A2), then the
mapping $\Omega^{n}\ni y=(y_1,\cdots,y_n)\mapsto F(y)=\left(\nabla
W*\mu(y_1)+\nabla V(y_1),\cdots,\nabla W*\mu(y_n)+\nabla V(y_n)\right)$ where
$\mu=\sum_{i=1}^{n}m_i\delta_{y_i}$, is continuous and bounded.
Extend $F$ to $\mathbb{R}^{d n}$ so that $F$ is still continuous
and bounded. Then by Theorem \ref{exdiffinclu} there exists an
absolutely continuous solution to the differential inclusion
\begin{equation}\label{disproj}
  \left\{
   \begin{aligned}
   -\dot{x}(t) &\in N(\Omega^{n},x(t))+F(x(t)),  \\
   x(0) &= (x_1,\cdots,x_n)\in \Omega^{n}.  \\
   \end{aligned}
  \right.
  \end{equation}
Similarly, if $\Omega$ is unbounded and $\nabla W,\nabla V$ satisfy liner growth conditions (GA2) and (GA4),
then the mapping $\mathbb{R}^{d n}\ni y=(y_1,\cdots,y_n)\mapsto
F(y)=\left(\nabla W*\mu(y_1)+\nabla(y_1),\cdots,\nabla W*\mu(y_n)+\nabla
V(y_n)\right)$ where $\mu=\sum_{i=1}^{n}m_i\delta_{y_i}$, is continuous and
has linear growth on $\mathbb{R}^{d n}$. By Theorem
\ref{exdiffinclu}, we still have an absolutely continuous solution
to (\ref{disproj}).\\
Now consider (\ref{disproj}) in components yields for $1\leq i\leq
n$ and $v_{i}(x)=-\sum_{j=1}^{n}\nabla W(x_i-x_j) m_{j}-\nabla
V(x_i)$,
\begin{equation*}
  \left\{
   \begin{aligned}
   -\dot{x}_{i}(t) &\in N(\Omega,x_{i}(t))-v_{i}(x(t)),  \\
   x_{i}(0) &= x_{i}\in \Omega.  \\
   \end{aligned}
  \right.
  \end{equation*}
Then similar argument as in Lemma \ref{inclu=proj} gives
\begin{equation*}
  \left\{
   \begin{aligned}
   \dot{x}_{i}(t) &=P_{x_{i}(t)}\left(v_{i}(x(t))\right),  \\
   x_{i}(0) &= x_{i}\in \Omega.  \\
   \end{aligned}
  \right.
  \end{equation*}

\end{proof}

\section{Existence and stability of solutions with $\Omega$ bounded} \label{sec:bddomain}
In this section, we show the existence and stability of solutions to (\ref{conequ}) for the case
when $\Omega$ is bounded, prox-regular and $W,V$ satisfy (A1)-(A2).\\

We approximate $\mu_{0}\in\mathcal{P}_{2}(\Omega)$ by
$\mu^{n}_{0}=\sum_{i=1}^{k(n)}m^{n}_i\delta_{x^{n}_{i}}$ such that
$x^{n}_{i}\in\Omega$ and $\lim_{n\to\infty}
d_{W}\left(\mu_{0},\mu^{n}_{0}\right)=0$. By Theorem \ref{existode}, for each $n\in\mathbb{N}$
there exists a a locally absolutely continuous solution to
\begin{equation} \label{ode_sys3}
  \left\{
   \begin{aligned}
   \dot{x}^{n}_{i}(t) &=P_{x^{n}_{i}(t)}\left(v^{n}_{i}(x(t))\right), \quad 1\leq i\leq k(n) \\
   x^{n}_{i}(0) &= x^{n}_{i}\in \Omega, \\
   \end{aligned}
  \right.
  \end{equation}
for $t\geq 0$,
where
 $$v^{n}_{i}\left(x(t)\right)=-\nabla W*\mu^{n}(t)(x^{n}_{i}(t))-\nabla V(x^{n}_{i}(t))=-\sum_{j=1}^{k(n)}m^{n}_{j}\nabla W\left(x^{n}_{i}(t)-x^{n}_{j}(t)\right)-\nabla V\left(x^{n}_{i}(t)\right)$$ and
$\mu^{n}(t)=\sum_{j=1}^{k(n)}m^{n}_{j}\delta_{x^{n}_{j}(t)}$. It is a
straightforward calculation to see that for any $\phi\in
C_{c}^{\infty}(\mathbb{R}^{d})$
\begin{equation*}
\frac{d}{dt}\int_{\mathbb{R}^{d}}\phi(x)d\mu^{n}(t,x)=\int_{\mathbb{R}^{d}}
\left\langle \nabla\phi(x),P_{x}\left(v^{n}(t,x)\right)\right\rangle
d\mu^{n}(t,x).
\end{equation*}
Thus $\mu^{n}(t)$ satisfies
\begin{equation*}
\partial_{t}\mu^{n}(t,x)+\dive\left(\mu^{n}(t,x)P_{x}\left(v^{n}(t,x)\right)\right)=0,
\end{equation*}
in the sense of distributions for $v^{n}(t,x)=-\nabla
W*\mu^{n}(t)(x)-\nabla V(x)$.

The following proposition contains the key estimate on the stability of solutions in the discrete case.
In particular it shows how the stability in Wasserstein metric $d_{W}$ defined in (\ref{def:Wassmetric}) is affected by the lack of convexity of the domain.
\begin{prop}\label{contrprop}
Assume that $\Omega$ is bounded and satisfies (M1), $W, V$ satisfy
(A1) and (A2). Then for two solutions $\mu^{n}(\tacka)$ and $\mu^{m}(\tacka)$ to the discrete system with
different initial data $\mu^{n}_{0},\mu^{m}_{0}$, we have for all
$t\geq 0$
\begin{equation}\label{contra4}
d_{W}\left(\mu^{n}(t),\mu^{m}(t)\right)\leq
\exp\left(\left(-\lamw^{-}-\lamv +\frac{\|\nabla
W\|_{L^{\infty}(\omo)}+\|\nabla
V\|_{L^{\infty}(\Omega)}}{\eta}\right)t\right)
d_{W}\left(\mu^{n}_{0},\mu^{m}_{0}\right).
\end{equation}
\end{prop}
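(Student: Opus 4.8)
The plan is to differentiate $d_W^2(\mu^n(t),\mu^m(t))$ along the two discrete flows and bound the derivative by twice the exponent appearing in \eqref{contra4} times $d_W^2$, then conclude by Gr\"onwall. Since both measures are finite sums of Dirac masses, I would work with an optimal transport plan $\gamma(t)\in\Gamma_o(\mu^n(t),\mu^m(t))$, which (as is classical for discrete measures) can be taken of the form $\gamma(t)=\sum_{i,j}\pi_{ij}(t)\,\delta_{(x^n_i(t),x^m_j(t))}$. A slightly cleaner route avoiding the time-dependence of the optimal plan: fix a time $t_0$, pick $\gamma_0\in\Gamma_o(\mu^n(t_0),\mu^m(t_0))$, transport its marginals by the two velocity fields, and use that $t\mapsto d_W^2(\mu^n(t),\mu^m(t))$ is locally Lipschitz (both curves are locally absolutely continuous) with
\[
\frac{d}{dt}\Big|_{t=t_0}\!\!\! d_W^2(\mu^n(t),\mu^m(t))\le 2\sum_{i,j}\pi_{ij}(t_0)\big\langle x^n_i(t_0)-x^m_j(t_0),\,\dot x^n_i(t_0)-\dot x^m_j(t_0)\big\rangle,
\]
which holds for a.e.\ $t_0$.

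Next I would substitute the ODEs \eqref{ode_sys3}, writing $\dot x^n_i=P_{x^n_i}(v^n_i)$ and $\dot x^m_j=P_{x^m_j}(v^m_j)$, and split each projected velocity as $P_x(v)=v-P_{N}(v)$ where $P_N$ denotes the projection onto the normal cone (Proposition \ref{condecomp}). The unprojected part $v^n_i-v^m_j$ is handled exactly as in the boundaryless case: expanding $v^n_i=-\sum_k m^n_k\nabla W(x^n_i-x^n_k)-\nabla V(x^n_i)$ and symmetrizing the double sum over $i,k$ against the plan $\pi$, the $\lambda_W$-convexity of $W$ on $\Conv(\omo)$ (A1) and $\lambda_V$-convexity of $V$ on $\Conv(\Omega)$ (A2) give a bound of the form $\le(-\lambda_W - \lambda_V)\,d_W^2$ for the interaction-plus-potential contribution; replacing $\lambda_W$ by $\lambda_W^-$ when the symmetrization of the $W$-term loses the sign. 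The genuinely new terms are the two normal-cone corrections: $\langle x^n_i-x^m_j,\,-P_N^{x^m_j}(v^m_j)\rangle$ and $\langle x^m_j-x^n_i,\,-P_N^{x^n_i}(v^n_i)\rangle$. Here is where $\eta$-prox-regularity enters: the vector $-P_N^{x}(v)$ lies in $-N(\Omega,x)$ up to sign, so applying \eqref{proxequiv} with $S=\Omega$, $x=x^n_i$, $y=x^m_j\in\Omega$ (and the reversed pair) bounds each such term by $\frac{|P_N^{x}(v)|}{2\eta}|x^n_i-x^m_j|^2$, and $|P_N^x(v)|\le|v|\le\|\nabla W\|_{L^\infty(\omo)}+\|\nabla V\|_{L^\infty(\Omega)}$ by the orthogonal decomposition and the trivial bound on $v$. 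Summing against $\pi$ and using $\sum_{i,j}\pi_{ij}|x^n_i-x^m_j|^2=d_W^2$ yields the $+\frac{\|\nabla W\|_{L^\infty(\omo)}+\|\nabla V\|_{L^\infty(\Omega)}}{\eta}\,d_W^2$ contribution.

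Collecting, $\frac{d}{dt}d_W^2\le 2\big(-\lambda_W^- -\lambda_V +\eta^{-1}(\|\nabla W\|_{L^\infty(\omo)}+\|\nabla V\|_{L^\infty(\Omega)})\big)\,d_W^2$ for a.e.\ $t$, and Gr\"onwall's inequality gives \eqref{contra4}. The main obstacle I anticipate is making the differentiation of $t\mapsto d_W^2(\mu^n(t),\mu^m(t))$ rigorous — i.e.\ justifying the one-sided estimate on its derivative via a competitor plan rather than requiring differentiability of the optimal plan itself — together with the bookkeeping of which terms keep the full $\lambda_W$ and which degrade to $\lambda_W^-$ after the double-sum symmetrization. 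Everything else (the convexity estimates, the prox-regularity inequality \eqref{proxequiv}, the orthogonality from Proposition \ref{condecomp}) is then a direct plug-in.
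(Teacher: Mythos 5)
Your proposal is correct and follows essentially the same route as the paper: differentiate $d_{W}^{2}(\mu^{n}(t),\mu^{m}(t))$ along an optimal plan (the paper cites Theorem 8.4.7 and Lemma 4.3.4 of \cite{AGS} for exactly the one-sided derivative bound you propose to justify via a competitor plan), split $P_{x}(v)=v-(v-P_{x}(v))$, control the unprojected part by the $\lambda_{W}$- and $\lambda_{V}$-convexity after symmetrizing the double integral, control the normal-cone remainder by \eqref{proxequiv} together with $\|v^{i}\|_{L^{\infty}}\leq \|\nabla W\|_{L^{\infty}(\omo)}+\|\nabla V\|_{L^{\infty}(\Omega)}$, and conclude by Gronwall. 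Only watch the pairing of signs in the two normal-cone terms: each must appear as $\langle w, y-x\rangle$ with $w\in N(\Omega,x)$ and $y\in\Omega$ so that \eqref{proxequiv} applies one-sidedly, which the correct decomposition does produce.
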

\begin{proof}
Note that $\mu^{n}(\tacka)$ is solution to the continuity equation
\begin{equation} \label{contn}
\partial_{t}\mu^{n}(t,x)+\dive\left(\mu^{n}(t,x)P_{x}\left(v^{n}(t,x)\right)\right)=0,
\end{equation}
for $v^{n}(t,x)=-\nabla W*\mu^{n}(t)(x)-\nabla V(x)$.
Since the discrete solutions may have different numbers of particles we use a transportation plan to relate them. Let $\gamma_{t}\in \Gamma_{o}\left(\mu^{n}(t),\mu^{m}(t)\right)$ be the optimal plan between $\mu^{m}$ and $\mu^{n}$ defined in (\ref{def:Optplan}).
By Theorem 8.4.7 and Lemma 4.3.4 from \cite{AGS}
\begin{equation}\label{gendiff1}
\frac{1}{2}\frac{d}{dt} d_{W}^{2}\left(\mu^{n}(t),\mu^{m}(t)\right)
= \int_{\Omega}\left\langle
P_{x}\left(v^{n}(t,x)\right)-P_{y}\left(v^{m}(t,y)\right),x-y\right\rangle
d\gamma_{t}(x,y).
\end{equation}
We first establish the contractivity the solutions would have if the boundary conditions were not present and then account for the change due to velocity projection at the boundary.
For $v^{n},v^{m}$, by (A1) and (A2), that is the convexity of $W$ and $V$,
\begin{flalign}
\nonumber &\int_{\Omega\times \Omega}\langle v^{n}(t,x)-v^{m}(t,y),x-y\rangle
d\gamma_{t}(x,y)\\ \nonumber & = \int_{\Omega\times\Omega}\langle -\nabla
W*\mu^{n}(t)(x)-\nabla V(x)+\nabla W*\mu^{m}(t)(y)-\nabla
V(y),x-y\rangle d\gamma_{t}(x,y)\\ \nonumber & =
\frac{1}{2}\int_{\Omega\times\Omega}\int_{\Omega\times\Omega}\langle
-\nabla W(x-z)+\nabla W(y-w),x-y-z+w\rangle
d\gamma_{t}(z,w)d\gamma_{t}(x,y)\\ \nonumber
& + \int_{\Omega\times\Omega}\langle -\nabla V(x)+\nabla
V(y),x-y\rangle d\gamma_{t}(x,y)\\ & \leq
-\frac{1}{2}\lamw\int_{\Omega\times\Omega}\int_{\Omega\times\Omega}|x-z-y+w|^{2}d\gamma_{t}(z,w)d\gamma_{t}(x,y)
-\lamv\int_{\Omega\times\Omega} |x-y|^{2}d\gamma_{t}(x,y) \label{WConv} \\ \nonumber &
\leq
\left(-\lamw^{-}-\lamv\right)\int_{\Omega\times\Omega}|x-y|^{2}d\gamma_{t}(x,y)\\ \nonumber
& =
\left(-\lamw^{-}-\lamv\right)d_{W}^{2}\left(\mu^{n}(t),\mu^{m}(t)\right).\nonumber
\end{flalign}

For the boundary effect, by the fact that $\Omega$ is $\eta$-prox-regular we have (\ref{proxequiv}), thus
\begin{equation}\label{errorest}
\begin{aligned}
& \int_{\Omega\times\Omega} \langle
P_{x}\left(v^{n}(t,x)\right)-v^{n}(t,x)-P_{y}\left(v^{m}(t,y)\right)+v^{m}(t,y),x-y\rangle
d\gamma_{t}(x,y)\\ & \leq
\int_{\Omega\times\Omega}\frac{\|v^{n}(t)\|_{L^{\infty}(\Omega)}+\|v^{m}(t)\|_{L^{\infty}(\Omega)}}{2\eta}|y-x|^{2}d\gamma_{t}(x,y)
\\ & =
\frac{\|v^{n}(t)\|_{L^{\infty}(\Omega)}+\|v^{m}(t)\|_{L^{\infty}(\Omega)}}{2\eta}d_{W}^{2}\left(\mu^{n}(t),\mu^{m}(t)\right).
\end{aligned}
\end{equation}
Notice that $v^{i}(x)=-\nabla W(x)*\mu^{i}(t)(x)-\nabla V(x)$
implies that for $i=n,m$ $$\|v^{i}\|_{L^{\infty}(\Omega)}\leq
\|\nabla W\|_{L^{\infty}(\omo)}+\|\nabla
V\|_{L^{\infty}(\Omega)}<\infty.$$ Plugging back into
(\ref{gendiff1}) we have
\begin{align*}
\frac{1}{2}\frac{d}{dt} d_{W}^{2}\left(\mu^{n}(t),\mu^{m}(t)\right)
& = \int_{\Omega}\left\langle
P_{x}\left(v^{n}(t,x)\right)-P_{y}\left(v^{m}(t,y)\right),x-y\right\rangle d\gamma_{t}(x,y)\\
& =\int_{\Omega\times\Omega}\left\langle
v^{n}(t,x)-v^{m}(t,y),x-y\right\rangle d\gamma_{t}(x,y)\\ & +
\int_{\Omega\times\Omega}\left\langle
P_{x}\left(v^{n}(t,x)\right)-v^{n}(t,x)-P_{y}\left(v^{m}(t,y)\right)+v^{m}(t,y),x-y\right\rangle
d\gamma_{t}(x,y)\\ & \leq\left( -\lamw^{-}-\lamv +\frac{\|\nabla
W\|_{L^{\infty}(\omo)}+\|\nabla
V\|_{L^{\infty}(\Omega)}}{\eta}\right)d_{W}^{2}\left(\mu^{n}(t),\mu^{m}(t)\right).
\end{align*}
By Gronwall's inequality, we know \eqref{contra4} for all $t\geq 0$.
\end{proof}

Since $n\to \infty, d_{W}(\mu^{n}_{0},\mu_{0})\to 0$, by Proposition \ref{contrprop} the solutions $\mu_n$ of \eqref{ode_sys3} form a Cauchy sequence in $n$, with respect to Wasserstein metric.
Thus
\begin{equation} \label{constmu}
\mu^{n}(t) \overset{d_W}{\longrightarrow} \mu(t) \;\;\te{ as } n \to \infty,
\end{equation}
for some $\mu(t) \in \mathcal P_2(\Omega)$.
\medskip

\begin{remk}
Our goal is to show that $\mu(\tacka)$ is a weak measure solution of \eqref{conequ}.
The most immediate idea would be to try to pass to limit directly in Definition \ref{def:wms}.
However note that since
 $P_x$ is not continuous in $x$ and thus the velocity field
 governing the dynamics is not continuous (at the boundary of $\Omega$).
 Given that $\mu_n$ converge to $\mu$ only in the weak topology of measures, the lack of continuity of
 velocities prevents us to directly pass to limit in the integral formulation given in  Definition \ref{def:wms}.
 To show that $\mu(\tacka)$ is a weak measure solution of \eqref{conequ} we use the theory of gradient flows in the spaces of probability measures
$\mathcal{P}_{2}(\Omega)$. Namely, we establish that $\mu(\tacka)$ satisfies the steepest descent property with respect to the total energy $\mathcal{E}$ defined in (\ref{def:totaene}) by showing $\mu^{n}(\tacka)$ satisfies such property and the property is stable under the weak topology of measures (convergence in the Wasserstein metric $d_{W}$).
\end{remk}

We turn to the introducing the elements of the theory of  gradient flows in the space of probability measures.
\begin{defi} \label{subdif}
Let $\mu\in\mathcal{P}_{2}(\Omega)$, a vector field $\xi$ on $\Omega$ is said to be in the
\emph{subdifferential} of $\mathcal{E}$ at $\mu$ if $\xi\in L^{2}(\mu)$,
i.e.
\begin{equation*}
\int_{\Omega} |\xi(x)|^{2}d\mu(x)<\infty,
\end{equation*}
and
\begin{equation*}
\mathcal{E}(\nu)-\mathcal{E}(\mu)\geq \inf_{\gamma\in
\Gamma_{o}(\mu,\nu)}\int_{\Omega\times\Omega} \langle
\xi(x),y-x\rangle
d\gamma(x,y)+o\left(d_{W}\left(\mu,\nu\right)\right)
\end{equation*}
for any $\nu\in \mathcal{P}_{2}(\Omega)$.
\end{defi}
Given a locally absolutely continuous curve $[0,\infty)\ni t\mapsto
\mu(t)\in \mathcal{P}_{2}(\Omega)$, for the metric derivative $|\mu'|$ defined in
(\ref{def:metrderi}) we have the following
\begin{thm}
There exists a unique Borel vector field $v(t)\in L^{2}(\mu(t))$
such that $\mu(\tacka)$ satisfies
\begin{equation*}
\partial_{t}\mu(t)+\dive\left(\mu(t)v(t)\right)=0
\end{equation*}
in the sense of distributions and $v(t)$ satisfies
\begin{equation*}
|\mu'|^{2}(t)=\int_{\Omega} |v(t,x)|^{2}d\mu(t,x),
\end{equation*}
for a.e. $t>0$.
\end{thm}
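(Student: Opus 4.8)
The statement is the standard structure theorem for locally absolutely continuous curves in $(\mathcal{P}_2(\Omega),d_{W})$ (regarding each $\mu(t)$ as an element of $\mathcal{P}_2(\mathbb{R}^d)$ supported in $\Omega$), and the plan is to reproduce the proof of Theorem 8.3.1 in \cite{AGS}. First I would establish \emph{existence} of at least one admissible velocity field. Starting from the local absolute continuity of $t\mapsto\mu(t)$, one mollifies in time and discretizes, and uses the Kantorovich--Rubinstein type duality that expresses $d_{W}$-increments as suprema over gradients of smooth functions; this produces a Borel vector field $w(t)\in L^2(\mu(t))$, defined for a.e. $t$, with $\partial_t\mu(t)+\dive(\mu(t)w(t))=0$ in the sense of distributions and $\|w(t)\|_{L^2(\mu(t))}\le|\mu'|(t)$ a.e.

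Next I would pass to the minimal field. For each $t$ the space $L^2(\mu(t);\mathbb{R}^d)$ is a Hilbert space, and it contains the closed subspace $\mathrm{Tan}_{\mu(t)}$ obtained as the $L^2(\mu(t))$-closure of $\{\nabla\phi:\phi\in C_c^\infty(\mathbb{R}^d)\}$. Set $v(t)$ to be the orthogonal projection of $w(t)$ onto $\mathrm{Tan}_{\mu(t)}$. Then $w(t)-v(t)$ is orthogonal to every $\nabla\phi$, which is exactly the statement $\dive(\mu(t)(w(t)-v(t)))=0$ distributionally; hence $v(t)$ also solves the continuity equation, and $\|v(t)\|_{L^2(\mu(t))}\le\|w(t)\|_{L^2(\mu(t))}\le|\mu'|(t)$. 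Measurability of $t\mapsto v(t)$ is inherited from that of $w(t)$ together with a measurable-selection argument for the projection.

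To obtain the matching lower bound $|\mu'|(t)\le\|v(t)\|_{L^2(\mu(t))}$ --- and hence the asserted equality --- I would prove the converse implication: any curve solving the continuity equation with a Borel velocity field $u(t)$ such that $\int_0^T\|u(t)\|_{L^2(\mu(t))}\,dt<\infty$ is locally absolutely continuous in $(\mathcal{P}_2,d_{W})$ with $|\mu'|(t)\le\|u(t)\|_{L^2(\mu(t))}$ for a.e. $t$. This is the crux of the argument. The clean route is to regularize $u$ in space to get a bounded, locally Lipschitz field with a classical flow, compare $\mu(t)$ with the push-forward of $\mu(s)$ along that flow to get $d_{W}(\mu(s),\mu(t))\le\int_s^t\|u(r)\|_{L^2(\mu(r))}\,dr$ up to a regularization error, and then send the regularization parameter to zero, using the $L^2$-integrability in time to kill the error. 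Alternatively one invokes the superposition principle to represent $\mu(t)$ by a probability measure on absolutely continuous paths and estimates $|\mu'|$ directly. Applying this to $u=v$ gives $\|v(t)\|_{L^2(\mu(t))}=|\mu'|(t)$ a.e.

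Finally, for \emph{uniqueness}, suppose $v_1(t),v_2(t)\in L^2(\mu(t))$ both solve the continuity equation and satisfy $\int_\Omega|v_i(t,x)|^2\,d\mu(t,x)=|\mu'|^2(t)$. Their difference is $\mu(t)$-divergence-free, hence orthogonal to $\mathrm{Tan}_{\mu(t)}$, so $v_1(t)$ and $v_2(t)$ have the same tangential component $\bar v(t):=\Pi_{\mu(t)}v_1(t)=\Pi_{\mu(t)}v_2(t)$. Since $\bar v(t)$ still solves the continuity equation, the converse inequality forces $\|\bar v(t)\|\ge|\mu'|(t)$, while projection gives $\|\bar v(t)\|\le\|v_i(t)\|=|\mu'|(t)$; the Pythagorean identity $\|v_i(t)\|^2=\|\bar v(t)\|^2+\|v_i(t)-\bar v(t)\|^2$ then yields $v_i(t)=\bar v(t)$ for a.e. $t$. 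The main obstacle is the third step: upgrading a distributional solution of the continuity equation with a merely $L^2$-in-space velocity to a genuine $d_{W}$-absolutely-continuous curve with the sharp bound on the metric derivative, since without spatial regularization the velocity field is not regular enough to run a classical characteristics argument.
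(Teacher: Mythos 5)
The paper does not prove this statement at all: it defers entirely to Theorem 8.3.1 of \cite{AGS}, and your sketch is a faithful outline of exactly that standard argument (construct an admissible field with $\|w(t)\|_{L^{2}(\mu(t))}\le|\mu'|(t)$, project onto the closure of gradients, prove the converse bound $|\mu'|(t)\le\|u(t)\|_{L^{2}(\mu(t))}$ by spatial regularization or the superposition principle, and conclude uniqueness from the Pythagorean identity). So the proposal is correct and consistent with the proof the paper relies on.
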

For the proof of the theorem, refer to Theorem 8.3.1 from
\cite{AGS}. We call the unique vector field $v(t)$ the tangent
velocity field and define
\begin{defi}
A locally absolutely continuous curve $[0,\infty)\ni t\mapsto
\mu(t)\in \mathcal{P}_{2}(\Omega)$ is a \emph{gradient flow} with
respect to the energy functional $\mathcal{E}$ if for a.e. $t>0$
\begin{equation*}
v(t)\in -\partial\mathcal{E}(\mu(t)),
\end{equation*}
where $v(t)$ is the tangent velocity field for $\mu(t)$.
\end{defi}

We now
show that $\mu(t)$ is a curve of maximal slope with respect to
$\mathcal{E}$.
\begin{thm}\label{maxslope}
$\mu(t)$ satisfies for any $0\leq s<t<\infty$
\begin{equation}\label{maxslope1}
\mathcal{E}\left(\mu(s)\right)\geq
\mathcal{E}\left(\mu(t)\right)+\frac{1}{2}\int_{s}^{t}|\mu'|^{2}(r)dr+\frac{1}{2}\int_{s}^{t}\int_{\Omega}|P_{x}\left(v(r,x)\right)|^{2}d\mu(r,x)dr,
\end{equation}
where $v(r,x)=-\int_{\Omega}\nabla W(x-y)d\mu(r,y)-\nabla V(x)$.
\end{thm}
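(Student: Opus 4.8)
The plan is to pass from the discrete energy dissipation identity (which holds with equality because the discrete curves solve an ODE system with a nice velocity field) to the limiting inequality \eqref{maxslope1}, using the lower semicontinuity results already established. First I would record the energy dissipation equality for each approximating curve $\mu^n(\tacka)$. Since $\mu^n(t) = \sum_j m^n_j \delta_{x^n_j(t)}$ and the particles solve $\dot x^n_i(t) = P_{x^n_i(t)}(v^n_i(x(t)))$, one computes directly
\begin{equation*}
\frac{d}{dt}\mathcal{E}(\mu^n(t)) = -\int_\Omega \left\langle P_x(v^n(t,x)), -v^n(t,x)\right\rangle d\mu^n(t,x) = -\int_\Omega |P_x(v^n(t,x))|^2 d\mu^n(t,x),
\end{equation*}
where the last equality uses the orthogonal decomposition of Proposition \ref{condecomp}: $\langle P_x(v), v\rangle = \langle v_T, v_T + v_N\rangle = |v_T|^2 = |P_x(v)|^2$. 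Since moreover the tangent velocity of $\mu^n$ is exactly $x \mapsto P_x(v^n(t,x))$ (restricted to the support), we have $|(\mu^n)'|^2(t) = \int_\Omega |P_x(v^n(t,x))|^2 d\mu^n(t,x)$, so integrating gives the discrete identity
\begin{equation*}
\mathcal{E}(\mu^n(s)) = \mathcal{E}(\mu^n(t)) + \frac12\int_s^t |(\mu^n)'|^2(r)dr + \frac12\int_s^t\int_\Omega |P_x(v^n(r,x))|^2 d\mu^n(r,x)dr.
\end{equation*}

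Next I would pass to the limit $n\to\infty$ term by term. The left and first right-hand terms: since $\mu^n(t)\to\mu(t)$ in $d_W$ for every $t$, and $W, V$ are continuous (and $\Omega$ is bounded, so everything is uniformly bounded), $\mathcal{E}$ is continuous along the sequence, so $\mathcal{E}(\mu^n(s))\to\mathcal{E}(\mu(s))$ and $\mathcal{E}(\mu^n(t))\to\mathcal{E}(\mu(t))$. For the metric-derivative term, the uniform convergence in $d_W$ (in fact, from Proposition \ref{contrprop} the convergence is uniform on compact time intervals) together with lower semicontinuity of the map $\mu(\tacka)\mapsto \int_s^t |\mu'|^2(r)dr$ with respect to pointwise $d_W$-convergence of curves (Theorem 8.3.1 and the lower semicontinuity results in Chapter 5 of \cite{AGS}) yields $\int_s^t |\mu'|^2(r)dr \le \liminf_n \int_s^t |(\mu^n)'|^2(r)dr$. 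For the last term, the integrand involves $|P_x(v^n(r,x))|^2$; here I would use the joint lower semicontinuity of $(x,v)\mapsto |P_x(v)|^2$ from Proposition \ref{lscproj}, combined with the fact that $v^n(r,x) = -\nabla W * \mu^n(r)(x) - \nabla V(x) \to v(r,x)$ uniformly in $x$ (because $\nabla W$ is uniformly continuous on the compact set $\omo$ and $\mu^n(r)\to\mu(r)$ weakly). A standard lower-semicontinuity argument for integral functionals along weakly converging measures (of the type in \cite[Chapter 5]{AGS}, e.g.\ applied to the functional $\nu \mapsto \int |P_x(v_\nu(x))|^2 d\nu$ with the $\Gamma$-convergence of the velocity fields) then gives
\begin{equation*}
\int_s^t\int_\Omega |P_x(v(r,x))|^2 d\mu(r,x)dr \le \liminf_{n\to\infty}\int_s^t\int_\Omega |P_x(v^n(r,x))|^2 d\mu^n(r,x)dr.
\end{equation*}
Combining the three limit passages with the discrete identity produces exactly \eqref{maxslope1}.

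The main obstacle is the last lower-semicontinuity step: one must handle both the discontinuity of $P_x$ in $x$ and the fact that $\mu^n(r)\to\mu(r)$ only weakly, simultaneously in the double integral over $(r,x)$. The key technical points that make it work are (i) Proposition \ref{lscproj} provides the needed joint lower semicontinuity of $|P_x(v)|^2$ in $(x,v)$, turning the discontinuity of $P$ into something harmless in the direction of the inequality we want; (ii) the velocities $v^n(r,\tacka)$ converge \emph{uniformly}, not merely weakly, so one can reduce to a lower-semicontinuity statement about $\int |P_x(v(r,x))|^2 d\nu$ along weakly converging $\nu$ with a \emph{fixed} integrand; (iii) Fatou's lemma in the time variable $r$ handles the outer integral, once one knows the inner integral is lower semicontinuous for a.e.\ $r$. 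A minor point to check is measurability and integrability of $r\mapsto \int_\Omega|P_x(v^n(r,x))|^2d\mu^n(r,x)$, which follows since this equals $|(\mu^n)'|^2(r)$, which is in $L^1_{loc}$ by local absolute continuity. I would also double-check the boundedness needed to apply dominated/Fatou-type arguments: since $\Omega$ is bounded and $\nabla W, \nabla V$ are bounded on the relevant compact sets, $|P_x(v^n(r,x))|^2$ is bounded uniformly in $n, r, x$, so all integrals are finite and the limit passages are justified.
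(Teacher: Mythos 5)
Your proposal is correct and follows essentially the same route as the paper: establish the discrete energy--dissipation relation for the particle solutions $\mu^n(\tacka)$, then pass to the limit term by term using continuity of $\mathcal{E}$ on the bounded domain, lower semicontinuity of $\int_s^t|(\mu^n)'|^2(r)\,dr$, and the lower semicontinuity of the projected-velocity term (the content of Lemma \ref{lscslope}, proved there via the representation of $|P_x(v)|^2$ as a supremum of affine functions of $v$) combined with Fatou's lemma in time. The only cosmetic difference is that you assert the discrete relation as an exact identity by claiming $|(\mu^n)'|^2(t)=\int_\Omega|P_x(v^n(t,x))|^2d\mu^n(t,x)$, whereas the paper only proves and only needs the inequality $|(\mu^n)'|^2(t)\le\int_\Omega|P_x(v^n(t,x))|^2d\mu^n(t,x)$, which suffices since the target \eqref{maxslope1} is itself an inequality.
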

Before proving the theorem, we need the following
\begin{lem}\label{lscslope}
Assume (M1) holds for $\Omega$ and $\nu^{n}\in
\mathcal{P}_{2}(\Omega)$ converges narrowly to $\nu\in
\mathcal{P}_{2}(\Omega)$ with
$\sup_{n}\int_{\Omega}|x|^{2}d\nu^{n}(x) <\infty$, then
\begin{equation}
\int_{\Omega}|P_{x}\left(v(x)\right)|^{2}d\nu(x)\leq
\liminf_{n\to\infty}\int_{\Omega}|P_{x}\left(v^{n}(x)\right)|^{2}d\nu^{n}(x),
\end{equation}
where $v^{n}(x)=-\int_{\Omega}\nabla W(x-y)d\nu^{n}(y)-\nabla V(x)$
and $v(x)=-\int_{\Omega}\nabla W(x-y)d\nu(y)-\nabla V(x)$.
\end{lem}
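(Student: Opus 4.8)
\emph{Strategy.} The plan is to lift everything to the product space $\Omega\times\mathbb{R}^{d}$, where Proposition \ref{lscproj} provides the joint lower semicontinuity of the integrand $(x,v)\mapsto|P_{x}(v)|^{2}$, and then to invoke the standard fact that integrals of non-negative lower semicontinuous functions are lower semicontinuous under narrow convergence of measures. What makes this non-automatic is that the velocity field itself changes with $n$ (the measure $\nu^{n}$ appears both as the integrating measure and inside the definition of $v^{n}$), so one first has to upgrade the narrow convergence $\nu^{n}\rightharpoonup\nu$ to uniform convergence $v^{n}\to v$ on $\Omega$. This is the only point where the boundedness of $\Omega$ is used, and it is the main obstacle.

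\emph{Uniform convergence of the velocities.} In the bounded setting $\omo$ is compact, so $\nabla W$ is bounded and uniformly continuous on $\omo$ and $\nabla V$ is bounded on $\Omega$; hence there is $R>0$ with $|v^{n}(x)|\le R$ and $|v(x)|\le R$ for all $x\in\Omega$ and all $n$. A routine equicontinuity argument — cover the compact set $\Omega$ by finitely many balls $B(x_{j},\delta)$ with $\delta$ a modulus of uniform continuity of $\nabla W$ on $\omo$, compare $v^{n}(x)$ with $v^{n}(x_{j})$ on each ball, and use that $\int_{\Omega}\nabla W(x_{j}-y)\,d(\nu^{n}-\nu)(y)\to0$ for each of the finitely many centers $x_{j}$ — gives
\[
\|v^{n}-v\|_{L^{\infty}(\Omega)}=\sup_{x\in\Omega}\Bigl|\int_{\Omega}\nabla W(x-y)\,d(\nu^{n}-\nu)(y)\Bigr|\longrightarrow 0 \quad\text{as } n\to\infty.
\]

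\emph{Lifting and conclusion.} Set $\tilde\nu^{n}:=(\Id,v^{n})_{\sharp}\nu^{n}$ and $\tilde\nu:=(\Id,v)_{\sharp}\nu$, which are probability measures on the compact set $\Omega\times\overline{B(R)}$. For $\phi\in C(\Omega\times\overline{B(R)})$ (hence uniformly continuous),
\[
\int\phi\,d\tilde\nu^{n}=\int_{\Omega}\bigl(\phi(x,v^{n}(x))-\phi(x,v(x))\bigr)\,d\nu^{n}(x)+\int_{\Omega}\phi(x,v(x))\,d\nu^{n}(x),
\]
where the first term is bounded by the modulus of continuity of $\phi$ evaluated at $\|v^{n}-v\|_{L^{\infty}(\Omega)}\to0$, and the second converges to $\int_{\Omega}\phi(x,v(x))\,d\nu(x)=\int\phi\,d\tilde\nu$ since $x\mapsto\phi(x,v(x))$ is bounded continuous and $\nu^{n}\rightharpoonup\nu$. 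Thus $\tilde\nu^{n}\rightharpoonup\tilde\nu$ narrowly. By Proposition \ref{lscproj} the function $g(x,v):=|P_{x}(v)|^{2}$ is non-negative and lower semicontinuous on $\Omega\times\overline{B(R)}$; writing $g$ as an increasing limit of bounded continuous functions and combining monotone convergence with the portmanteau theorem yields $\liminf_{n}\int g\,d\tilde\nu^{n}\ge\int g\,d\tilde\nu$. Therefore
\[
\liminf_{n\to\infty}\int_{\Omega}|P_{x}(v^{n}(x))|^{2}\,d\nu^{n}(x)=\liminf_{n\to\infty}\int g\,d\tilde\nu^{n}\ge\int g\,d\tilde\nu=\int_{\Omega}|P_{x}(v(x))|^{2}\,d\nu(x),
\]
which is the assertion. (In the unbounded case with uniformly bounded supports the same argument applies with $\omo$ replaced by the relevant compact set; the linear growth of $\nabla W,\nabla V$ then guarantees the needed uniform bounds.)
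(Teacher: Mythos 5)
Your proof is correct in the setting in which the lemma lives (Section \ref{sec:bddomain}, $\Omega$ bounded), but it takes a genuinely different route from the paper's. The paper uses \emph{both} conclusions of Proposition \ref{lscproj} --- joint lower semicontinuity \emph{and} convexity in $v$ --- to invoke Proposition 6.42 of \cite{FL} and write $|P_{x}(v)|^{2}=\sup_{i}\{a_{i}(x)+b_{i}(x)\cdot v\}$ with $a_{i},b_{i}$ bounded continuous; for each fixed $i$ the integrand is affine in the velocity, so the only analytic input needed is the weak convergence $\int b_{i}\cdot(\nabla W*\nu^{n})\,d\nu^{n}\to\int b_{i}\cdot(\nabla W*\nu)\,d\nu$ (the Lemma 2.7-of-\cite{CDFLS} type statement), after which one takes the supremum over $i$ and applies monotone convergence. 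You discard convexity entirely: you upgrade narrow convergence to \emph{uniform} convergence $v^{n}\to v$ by equicontinuity on the compact set $\Omega$, push forward to get narrow convergence of the joint laws $(\Id,v^{n})_{\sharp}\nu^{n}$, and conclude by the portmanteau theorem for nonnegative lower semicontinuous integrands. Your version is more elementary and self-contained (no duality representation of the convex integrand), but it is less portable: the uniform-convergence step genuinely needs $\Omega$ compact, or at least uniformly bounded supports, whereas the paper's affine-test-function argument needs only the weak convergence of $\nabla W*\nu^{n}$ tested against bounded continuous fields. That weaker requirement is what lets the authors reuse the same proof in Sections \ref{sec:unbddglob} and \ref{sec:unbddomain}; your closing parenthetical covers the uniformly-compactly-supported case of Section \ref{sec:unbddomain}, but not the general $\mathcal{P}_{2}(\Omega)$ setting of Theorem \ref{thm:maxslope2}, where the supports of the approximating measures are not uniformly bounded and only the second moments are controlled.
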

\begin{proof}
Similar argument as in Lemma 2.7 from \cite{CDFLS} yields that
$\nabla W*\nu^{n}$ converges weakly to $\nabla W*\nu$, i.e., for any
$\phi\in C^{0}_{b}\left(\mathbb{R}^{d}\right)$
\begin{equation*}
\lim_{n\to\infty}\int_{\Omega} \nabla W*\nu^{n}(x)\cdot
\phi(x)d\nu^{n}(x)=\int_{\Omega}\nabla W*\nu(x)\cdot \phi(x)d\nu(x).
\end{equation*}
Then by Proposition \ref{lscproj} we proved in Section \ref{sec:em} and Proposition 6.42 from
\cite{FL}, we know that there exist two sequences of bounded
continuous functions $a_{i},b_{i}$ such that for all
$x\in\Omega,v\in\mathbb{R}^{d}$
\begin{equation*}
|P_{x}\left(v\right)|^{2}=\sup_{i\in
\mathbb{N}}\left\{a_{i}(x)+b_{i}(x)\cdot v\right\}.
\end{equation*}
Thus
\begin{align*}
\liminf_{n\to \infty} \int_{\Omega}
|P_{x}\left(v^{n}(x)\right)|^{2}d\nu^{n}(x) & = \liminf_{n\to
\infty} \int_{\Omega} \sup_{i}\left\{a_{i}(x)+b_{i}(x)\cdot v^{n}(x)\right\}d\nu^{n}(x)\\
& \geq \liminf_{n\to\infty}
\int_{\Omega}\left(a_{i}(x)+b_{i}(x)\left(-\nabla W*\nu^{n}(x)-\nabla V(x)\right)\right)d\nu^{n}(x)\\
& = \int_{\Omega} \left(a_{i}(x)+b_{i}(x)\left(-\nabla
W*\nu(x)-\nabla V(x)\right)\right)d\nu(x).
\end{align*}
Taking supremum over $i\in\mathbb{N}$ and using Lebesgue's monotone
convergence theorem then gives
\begin{align*}
\liminf_{n\to \infty} \int_{\Omega}
|P_{x}\left(v^{n}(x)\right)|^{2}d\nu^{n}(x) & \geq
\sup_{i\in\mathbb{N}}\int_{\Omega}
\left(a_{i}(x)+b_{i}(x)\left(\nabla W*\nu(x)+\nabla V(x)\right)\right)d\nu(x)\\
& = \int_{\Omega} |P_{x}\left(v(x)\right)|^{2}d\nu(x).
\end{align*}

\end{proof}
We now start to prove the theorem
\begin{proof}[Proof of Theorem \ref{maxslope}]
We first show that the map $t\mapsto \mathcal{E}(\mu^{n}(t))$ is
locally absolutely continuous. Indeed, for $0\leq s<t<\infty$
\begin{flalign}\label{absoE}
& \left|\mathcal{E}(\mu(t))-\mathcal{E}(\mu(s))\right| \\ \nonumber & =
\left|\sum_{i=1}^{k(n)}m^{n}_{i}\left(V(x^{n}_{i}(t))-V(x^{n}_{i}(s))\right)+\frac{1}{2}\sum_{i,j=1}^{k(n)}m^{n}_{i}m^{n}_{j}
\left(W(x^{n}_{i}(t)-x^{n}_{j}(t))-W(x^{n}_{i}(s)-x^{n}_{j}(s))\right)\right|\\ \nonumber & \leq
\sum_{i=1}^{k(n)}m^{n}_{i}\left|V(x^{n}_{i})-V(x^{n}_{j})\right|+
\frac{1}{2}\sum_{i,j=1}^{k(n)}m^{n}_{i}m^{n}_{j}\left|W(x^{n}_{i}(t)-x^{n}_{j}(t))-W(x^{n}_{i}(s)-x^{n}_{j}(s))\right| \\ \nonumber & \leq
\sum_{i=1}^{k(n)}m^{n}_{i}\|\nabla V\|_{L^{\infty}(\Conv(\Omega))}|x^{n}_{i}(t)-x^{n}_{i}(s)|
+\sum_{i=1}^{k(n)}m^{n}_{i}\|\nabla W\|_{L^{\infty}(\Conv(\omo))}|x^{n}_{i}(t)-x^{n}_{i}(s)|\\ \nonumber
 &\leq
\left(\|\nabla V\|_{L^{\infty}(\Conv(\Omega))}+\|\nabla W\|_{L^{\infty}(\Conv(\omo))}\right)\sum_{i=1}^{k(n)}m^{n}_{i}|x^{n}_{i}(t)-x^{n}_{i}(s)|.\nonumber
\end{flalign}
Thus $t\mapsto\mathcal{E}(\mu(t))$ is locally absolutely continuous since $t\mapsto x^{n}_{i}(t)$ is locally absolutely continuous.

Since $\mu^{n}(\tacka)$ are solutions to the discrete systems, it is direct to calculate that
\begin{equation*}
\frac{d}{dt}\mathcal{E}(\mu^{n}(t))=-\int_{\Omega}|P_{x}(v^{n}(t,x))|^{2}d\mu^{n}(t,x),
\end{equation*}
and $|\left(\mu^{n}\right)'|^{2}(t)\leq \int_{\Omega}|P_{x}(v^{n}(t,x))|^{2}d\mu^{n}(t,x)$ for a.e. $t>0$.
Combining with the fact that $t\mapsto \mathcal{E}(\mu^{n}(t))$ is locally absolutely continuous then gives,
\begin{equation}\label{dismaxslope}
\mathcal{E}\left(\mu^{n}(s)\right)\geq
\mathcal{E}\left(\mu^{n}(t)\right)+\frac{1}{2}\int_{s}^{t}|\left(\mu^{n}\right)'|^{2}(r)dr+
\frac{1}{2}\int_{s}^{t}\int_{\Omega}|P_{x}\left(v^{n}(r,x)\right)|^{2}d\mu^{n}(r,x)dr.
\end{equation}
Note that $\Omega$ is bounded, $W, V\in C^{1}(\mathbb{R}^{d})$ and
$\lim_{n\to\infty}d_{W}\left(\mu^{n}(r),\mu(r)\right)=0$ for any
$0\leq r<\infty$, we get
\begin{equation*}
\lim_{n\to\infty}\mathcal{E}(\mu^{n}(r))=\mathcal{E}(\mu(r)).
\end{equation*}
Also by Lemma \ref{lscslope}, for any $0\leq r<\infty$
\begin{equation*}
\liminf_{n\to\infty}
\int_{\Omega}|P_{x}\left(v^{n}(r,x)\right)|^{2}d\mu^{n}(r,x)\geq
\int_{\Omega}|P_{x}\left(v(r,x)\right)|^{2}d\mu(r,x).
\end{equation*}
By Fatou's lemma, we then have
\begin{equation}
\liminf_{n\to\infty}
\int_{s}^{t}\int_{\Omega}|P_{x}\left(v^{n}(r,x)\right)|^{2}d\mu^{n}(r,x)dr\geq
\int_{s}^{t}\int_{\Omega}|P_{x}\left(v(r,x)\right)|^{2}d\mu(r,x)dr.
\end{equation}
We now claim that
\begin{equation}\label{lscmetricslope}
\liminf_{n\to\infty}\int_{s}^{t}|\left(\mu^{n}\right)'|^{2}(r)dr\geq
\int_{s}^{t}|\mu'|^{2}(r)dr.
\end{equation}
To see that, first notice that
$\sup_{n}\int_{s}^{t}|\left(\mu^{n}\right)'|^{2}(r)dr < \infty$, so
$|\left(\mu^{n}\right)'|\in L^{2}([s,t])$ and converges weakly in
$L^{2}([s,t])$ to some function $A$ as $n\to \infty$. We then have
for any $0\leq s\leq S\leq T\leq t<\infty$
\begin{align*}
d_{W}\left(\mu(S),\mu(T)\right) & = \lim_{n\to\infty}
d_{W}\left(\mu^{n}(S),\mu^{n}(T)\right)\\ & \leq
\liminf_{n\to\infty} \int_{S}^{T}|\left(\mu^{n}\right)'|(r)dr \\ & =
\int_{S}^{T}A(r)dr.
\end{align*}
Thus we have $$|\mu'|(r)\leq A(r)$$ for $s\leq r\leq t$, which then
implies
\begin{align*}
\int_{s}^{t}|\mu'|^{2}(r)dr & \leq \int_{s}^{t} A^{2}(r) dr \\
& \leq \liminf_{n\to\infty}\int_{s}^{t}
|\left(\mu^{n}\right)'|^{2}(r)dr.
\end{align*}
The claim is proved. Now take $n\to\infty$ in
(\ref{dismaxslope})gives
\begin{equation*}
\mathcal{E}\left(\mu(s)\right)\geq
\mathcal{E}\left(\mu(t)\right)+\frac{1}{2}\int_{s}^{t}|\mu'|^{2}(r)dr+\frac{1}{2}\int_{s}^{t}\int_{\Omega}|P_{x}\left(v(r,x)\right)|^{2}d\mu(r,x)dr,
\end{equation*}
as desired.
\end{proof}
Note that as a byproduct of the proof, we obtain that $\mu(\tacka)$ is a
locally absolutely continuous curve in $\mathcal{P}_{2}(\Omega)$. We
now show the proof of the main Theorem \ref{exgrad}
\begin{proof}[Proof of Theorem \ref{exgrad}]
Since $\mu(\tacka)\in \mathcal{P}_{2}(\Omega)$ is locally absolutely
continuous, by Theorem 8.31 from \cite{AGS}, there exists a unique
Borel vector field $\tilde v$ such that the continuity equation
\begin{equation}\label{conequ831}
\partial_{t}\mu(t)+\dive\left(\mu(t)\tilde v(t)\right)=0,
\end{equation}
in the sense of distributions, i.e., tested against all $\phi\in
C_{c}^{\infty}\left([0,\infty)\times\mathbb{R}^{d}\right)$, and
\begin{equation*}
\int_{\Omega}|\tilde v(t,x)|^{2}d\mu(t,x)=|\mu'|^{2}(t),
\end{equation*}
for a.e. $t\geq 0$. Then by Proposition 8.4.6 from \cite{AGS}, for
a.e. $t>0$
\begin{equation}\label{tanvel}
\lim_{h\to
0}\left(\pi^{1},\frac{1}{h}\left(\pi^2-\pi^1\right)\right)_{\sharp}\gamma_{t}^{h}=\left(\Id\times
\tilde v(t)\right)_{\sharp}\mu(t),
\end{equation}
in $\left(\mathcal{P}_{2}(\Omega),d_{W}\right)$ for any
$\gamma_{t}^{h}\in \Gamma_{o}(\mu(t),\mu(t+h))$. Here we also need the following
stronger convergence: Denote the disintegration of $\gamma_{t}^{h}$ with respect to $\mu(t)$ by $\nu^{h}_{x}$, then as $h\to 0$,
$\int_{\Omega} \frac{y-\cdot}{h}d\nu_{\cdot}^{h}(y)$ converges to the vector field $\tilde v(t,\cdot)$ weakly in $L^{2}(\mu(t))$.
The observation is that
\begin{align*}
\lim_{h\to 0} \left\|\int_{\Omega}\frac{y-\cdot}{h}d\nu^{h}_{\cdot}(y)\right\|_{L^{2}(\mu(t))}^{2} & =
\lim_{h\to 0} \int_{\Omega} \left|\int_{\Omega}\frac{y-x}{h}d\nu^{h}_{x}(y)\right|^{2} d\mu(t,x) \\ & \leq
\lim_{h\to 0} \int_{\Omega\times \Omega} \frac{|y-x|^{2}}{h^{2}}d\gamma^{h}_{t}(x,y)\\ & =
\lim_{h\to 0} \frac{d_{W}^{2}(\mu(t),\mu(t+h))}{h^{2}} \\ & <
\infty.
\end{align*}
Thus $\int_{\Omega}\frac{y-\cdot}{h}d\nu^{h}_{\cdot}(y)$ converges weakly in $L^{2}(\mu(t))$ to some vector field $\hat v(t,\cdot)$. This
together with (\ref{tanvel}) implies $\hat v=\tilde v$ and we have the weak $L^2(\mu(t))$ convergence of $\int_{\Omega}\frac{y-\cdot}{h}d\nu^{h}_{\cdot}(y)$ to $\tilde v(t)$ as stated.

We now claim the
following chain rule: for a.e. $t>0$
\begin{equation}
\frac{d}{dt}\mathcal{E}\left(\mu(t)\right)=\int_{\Omega}
\left\langle\nabla W*\mu(t)(x)+\nabla V(x),\tilde
v(t,x)\right\rangle d\mu(t,x).
\end{equation}
Indeed, we first notice that since $\mu(\tacka)$ is locally absolutely
continuous, $\mathcal{E}(\mu(\tacka))$ is also locally absolutely
continuous. To see that we have
\begin{align*}
|\mathcal{E}(\mu(t))-\mathcal{E}(\mu(s))|  & \leq \frac{1}{2}\left|
\int_{\Omega\times \Omega}W(x-y)d\mu(t,x)d\mu(t,y)-
\int_{\Omega\times \Omega}W(z-w)d\mu(s,z)d\mu(s,w)\right| \\ & +
\left|\int_{\Omega}V(x)d\mu(t,x)-\int_{\Omega}V(z)d\mu(s,z)\right| \\
& \leq \int_{\Omega\times\Omega}\left(\|\nabla
V\|_{L^{\infty}\left(\Conv\left(\Omega\right)\right)}+\|\nabla
W\|_{L^{\infty}\left(\Conv\left(\omo\right)\right)}\right)|x-z|d\gamma(x,z) \\
& \leq \left(\|\nabla
V\|_{L^{\infty}\left(\Conv\left(\Omega\right)\right)}+\|\nabla
W\|_{L^{\infty}\left(\Conv\left(\omo\right)\right)}\right)d_{W}\left(\mu(t),\mu(s)\right).
\end{align*}
Thus by the locally absolute continuity of $\mu(\tacka)$,
$\mathcal{E}(\mu(\tacka))$ is also locally absolutely continuous. Now for
any fixed $\mu,\nu\in \mathcal{P}_{2}(\Omega)$ and $\gamma\in
\Gamma_{o}(\mu,\nu)$, consider the function
\begin{align}\label{fnonincrease}
f(t)= &
\frac{W\left(t\left(x_1-x_2\right)-(1-t)\left(y_1-y_2\right)\right)-W(x_1-x_2)}{2t}\\
& + \frac{2V\left(tx_2+(1-t)y_2\right)-2V(x_2)}{2t}-
\frac{\lamv}{2}t |x_2-y_2|^{2}-\frac{\lamw}{2} t
\left(|x_1-y_1|^{2}+|x_2-y_2|^{2}\right).\nonumber
\end{align}
Due to (A1) and (A2), the $\lambda$-geodesic convexity of $W, V$, we
know $f$ is non-decreasing on $[0,1]$. So $f(1)\geq \liminf_{t\to
0^{+}} f(t)$. Integrating over $d\gamma(x_1,y_1)d\gamma(x_2,y_2)$
gives
\begin{align*}
\mathcal{E}(\nu) -\mathcal{E}(\mu) & = \int_{\Omega \times \Omega}
\int_{\Omega \times \Omega}
\frac{W(y_1-y_2)+2V(y_2)-W(x_1-x_2)-2V(x_2)}{2}
d\gamma(x_1,y_1)d\gamma(x_2,y_2)\\
 \geq &  \int_{\Omega \times \Omega}\int_{\Omega \times \Omega}\left \langle \nabla W(x_2-x_1)+\nabla
V(x_2),y_2-x_2\right\rangle d\gamma(x_1,y_1)d\gamma(x_2,y_2)+o\left(d_{W}\left(\mu,\nu\right)\right)\\
= &  \int_{\Omega \times \Omega} \left\langle \int_{\Omega}\nabla
W(x_2-x_1)d\mu(x_1)+\nabla
V(x_2),y_2-x_2\right\rangle d\gamma(x_2,y_2)+o\left(d_{W}\left(\mu,\nu\right)\right)\\
=&  \int_{\Omega \times \Omega} \left\langle\nabla W*\mu(x_2)+\nabla
V(x_2),y_2-x_2\right\rangle
d\gamma(x_2,y_2)+o\left(d_{W}\left(\mu,\nu\right)\right).
\end{align*}
Denote $v(t,x)=-\nabla W*\mu(t,x)-\nabla V(x)$, we notice that
\begin{align*}
\langle -v(t,x_2),y_2-x_2\rangle & =\left\langle
-P_{x_2}\left(v(t,x_2)\right),y_2-x_2\right\rangle+\left\langle
-v(t,x_2)+P_{x_2}\left(v(t,x_2)\right),y_2-x_2\right\rangle\\ & \geq
\left\langle -P_{x_2}\left(v(t,x_2)\right),y_2-x_2\right\rangle
-\frac{\|\nabla W\|_{L^{\infty}(\omo)}+\|\nabla
V\|_{L^{\infty}(\Omega)}}{2\eta}|y_2-x_2|^{2},
\end{align*}
and
\begin{equation*}
\int_{\Omega\times\Omega}|x_2-y_2|^{2}d\gamma(x_2,y_2)=d_{W}^{2}(\mu,\nu).
\end{equation*}
Thus
\begin{equation*}
\mathcal{E}(\nu) -\mathcal{E}(\mu) \geq \int_{\Omega \times \Omega}
\left\langle-P_{x_2}\left(-\nabla W*\mu(x_2)-\nabla
V(x_2)\right),y_2-x_2\right\rangle
d\gamma(x_2,y_2)+o\left(d_{W}\left(\mu,\nu\right)\right),
\end{equation*}
which implies
\begin{equation*}
-P\left(v(t)\right)=-P\left(-\nabla W*\mu(t)-\nabla V\right)\in
\partial\mathcal{E}(\mu(t)).
\end{equation*}
Take $\mu=\mu(t),\nu=\mu(t+h)$ and
$\gamma^{h}_{t}\in\Gamma_{o}(\mu(t),\mu(t+h))$ then gives
\begin{flalign*}
& \lim_{h\to
0^{+}}\frac{\mathcal{E}\left(\mu(t+h)\right)-\mathcal{E}\left(\mu(t)\right)}{h}
\\ & \geq \limsup_{h\to 0^{+}}\left(\int_{\Omega \times \Omega} \left\langle\nabla
W*\mu(t,x_2)+\nabla V(x_2),\frac{y_2-x_2}{h}\right\rangle
d\gamma^{h}_{t}(x_2,y_2)+\frac{1}{h}o\left(d_{W}\left(\mu(t),\mu(t+h)\right)\right)\right)\\
& = \int_{\Omega} \left\langle \nabla W*\mu(t)(x_2)+\nabla
V(x_2),\tilde v(t,x_2)\right\rangle d\mu(t,x_2),
\end{flalign*}
where the last equality comes from (\ref{tanvel}). Similarly, by
taking $\mu=\mu(t), \nu=\mu(t-h)$, we have
\begin{equation*}
\lim_{h\to
0^{+}}\frac{\mathcal{E}\left(\mu(t)\right)-\mathcal{E}\left(\mu(t-h)\right)}{h}\leq
\int_{\Omega} \left\langle \nabla W*\mu(t)(x_2)+\nabla V(x_2),\tilde
v(t,x_2)\right\rangle d\mu(t,x_2).
\end{equation*}
Together with the fact that $\mathcal{E}(\mu(\tacka)$ is locally
absolutely continuous, we have for a.e. $t>0$
\begin{equation*}
\frac{d}{dt}\mathcal{E}\left(\mu(t)\right)=\int_{\Omega}
\left\langle\nabla W*\mu(t)(x)+\nabla V(x),\tilde
v(t,x)\right\rangle d\mu(t,x).
\end{equation*}
The claim is proved. Now for
$v_{N}(t,x)=v(t,x)-P_{x}\left(v(t,x)\right)$, we have $v_{N}(t,x)\in
N(\Omega,x)$ and $\|v_{N}(t)\|_{L^{\infty}(\Omega)}\leq
\|v(t)\|_{L^{\infty}(\Omega)}<\infty$. Thus
\begin{align*}
\lim_{h\to 0^{+}}\int_{\Omega\times\Omega}\left\langle
v_{N}(t,x),\frac{y-x}{h}\right\rangle d\gamma_{t}^{h}(x,y) & \leq
\lim_{h\to 0^{+}}\int_{\Omega\times\Omega}
\frac{\|v_{N}(t)\|_{L^{\infty}(\Omega)}}{2\eta}\frac{1}{h}|x-y|^{2}d\gamma^{h}_{t}(x,y)\\
& \leq \lim_{h\to
0^{+}}\frac{\|v(t)\|_{L^{\infty}(\Omega)}}{2\eta}\frac{d_{W}^{2}\left(\mu(t),\mu(t+h)\right)}{h}\\
& = 0,
\end{align*}
which together with the weak $L^{2}\left(\mu(t)\right)$-convergence of $\int_{\Omega}\frac{y-\cdot}{h}d\nu_{\cdot}(y)$ implies
\begin{equation*}
\int_{\Omega}\left\langle v_{N}(t,x),\tilde v(t,x)\right\rangle
d\mu(t,x)\leq 0.
\end{equation*}
 We then know that
\begin{equation}
\frac{d}{dt}\mathcal{E}\left(\mu(t)\right) \geq -\int_{\Omega}
\left\langle P_{x}\left(-\nabla W*\mu(t)(x)-\nabla
V(x)\right),\tilde v(t,x)\right\rangle d\mu(t,x).
\end{equation}
Together with (\ref{maxslope1}), we get for a.e $t>0$
\begin{equation*}
\tilde v(t,x)= P_{x}\left(v(t,x)\right)= P_{x}\left(-\nabla
 W*\mu(t)(x)-\nabla V(x)\right)\in -\partial\mathcal{E}\left(\mu(t)\right),
\end{equation*}
\begin{equation*}
|\mu'|^{2}(t)=\int_{\Omega}\left|P_{x}\left(-\nabla
 W*\mu(t)(x)-\nabla V(x)\right)\right|^{2}d\mu(t,x)
\end{equation*}
and for any $0\leq s\leq t<\infty$
\begin{equation*}
\mathcal{E}(\mu(s))=
\mathcal{E}(\mu(t))+\int_{s}^{t}\int_{\Omega}\left|P_{x}\left(-\nabla
 W*\mu(r)(x)-\nabla V(x)\right)\right|^{2}d\mu(r,x)dr.
\end{equation*}
Thus $\mu(\tacka)$ is a gradient flow with respect to $\mathcal{E}$ and
by (\ref{conequ831}), a weak measure solution to (\ref{conequ}).
\end{proof}
\begin{remk}
In \cite{CarLisMai}, Carrillo, Lisini and Mainini showed
weak $L^{2}(\mu(t))$ convergence of $\int_{\Omega}\frac{y-\cdot}{h}d\nu^{h}_{\cdot}(y)$
to $\tilde v(t,\cdot)$ in a more general setting than ours.
\end{remk}

We turn to the proof of Theorem \ref{stagrad}
\begin{proof}[Proof of Theorem \ref{stagrad}]
We show (\ref{contra3}) first. Let $\mu^{1}(\tacka),\mu^{2}(\tacka)$ be two
solutions to (\ref{conequ}), by Theorem 8.4.7 and Lemma 4.3.4 from
\cite{AGS}, we have
\begin{equation}\label{gendiff}
\frac{d}{dt}d_{W}^{2}\left(\mu^{1}(t),\mu^{2}(t)\right)=2\int_{\Omega\times\Omega}\langle
P_{x}\left(v^{1}(t,x)\right)-P_{y}\left(v^{2}(t,y)\right),x-y\rangle
d\gamma_{t}(x,y),
\end{equation}
where $\gamma_{t}\in \Gamma_{o}\left(\mu^{1}(t),\mu^{2}(t)\right)$
and $v^{i}(t,x)=-\nabla W*\mu^{i}(t)(x)-\nabla V(x)$ for $i=1,2$.
For $v^{i}$, by (A1) and (A2) similar argument as in the proof of
Proposition \ref{contrprop} gives
\[  \int_{\Omega\times \Omega}\langle v^{1}(t,x)-v^{2}(t,y),x-y\rangle
d\gamma_{t}(x,y) \leq
\left(-\lamw^{-}-\lamv\right)d_{W}^{2}\left(\mu^{1}(t),\mu^{2}(t)\right).
\]
By the fact that $\Omega$ is $\eta$-prox-regular we have
\begin{flalign*}
& \int_{\Omega\times\Omega} \langle
P_{x}\left(v^{1}(t,x)\right)-v^{1}(t,x)-P_{y}\left(v^{2}(t,y)\right)+v^{2}(t,y),x-y\rangle
d\gamma_{t}(x,y)\\ & \leq
\frac{\|v^{1}(t)\|_{L^{\infty}(\Omega)}+\|v^{2}(t)\|_{L^{\infty}(\Omega)}}{2\eta}d_{W}^{2}\left(\mu^{1}(t),\mu^{2}(t)\right),
\end{flalign*}
where $v^{i}$ satisfies $$\|v^{i}\|_{L^{\infty}(\Omega)}\leq
\|\nabla W\|_{L^{\infty}(\omo)}+\|\nabla
V\|_{L^{\infty}(\Omega)}<\infty.$$ Plugging back into
(\ref{gendiff}) yields
\begin{align*}
\frac{1}{2}\frac{d}{dt}d_{W}^{2}\left(\mu^{1}(t),\mu^{2}(t)\right) &
=\int_{\Omega\times\Omega}\langle
P_{x}\left(v^{1}(t,x)\right)-P_{y}\left(v^{2}(t,y)\right),x-y\rangle
d\gamma_{t}(x,y)\\ & = \int_{\Omega\times\Omega} \langle
v^{1}(t,x)-v^{2}(t,y),x-y\rangle d\gamma_{t}(x,y)\\ &
+\int_{\Omega\times\Omega}\langle
P_{x}\left(v^{1}(t,x)\right)-v^{1}(t,x)-P_{y}\left(v^{2}(t,y)\right)+v^{2}(t,y),x-y\rangle
d\gamma_{t}(x,y)\\ & \leq\left( -\lamw^{-}-\lamv +\frac{\|\nabla
W\|_{L^{\infty}(\omo)}+\|\nabla
V\|_{L^{\infty}(\Omega)}}{\eta}\right)d_{W}^{2}\left(\mu^{1}(t),\mu^{2}(t)\right).
\end{align*}
Then by Gronwall's inequality we have for all $t\geq 0$
\begin{equation}
d_{W}\left(\mu^{1}(t),\mu^{2}(t)\right)\leq
\exp\left(\left(-\lamw^{-}-\lamv +\frac{\|\nabla
W\|_{L^{\infty}(\omo)}+\|\nabla
V\|_{L^{\infty}(\Omega)}}{\eta}\right)t\right)
d_{W}\left(\mu^{1}_{0},\mu^{2}_{0}\right).
\end{equation}
(\ref{contra3}) is proved. For (\ref{EVI3}), we have if $\mu(\tacka)$ is
a weak measure solution to (\ref{conequ}), then for any
$\nu\in\mathcal{P}_{2}(\Omega)$ and
$\gamma_{t}\in\Gamma_{o}(\mu(t),\nu)$
\begin{align*}
\frac{1}{2}\frac{d}{dt}d_{W}^{2}\left(\mu(t),\nu\right) & =
\int_{\Omega\times\Omega}\left\langle
P_{x}\left(v(t,x)\right),x-y\right\rangle d\gamma_{t}(x,y)\\ & =
\int_{\Omega\times\Omega}\left(\langle v(t,x),x-y\rangle+\langle
P_{x}\left(v(t,x)\right)-v(t,x),x-y\rangle\right)d\gamma_{t}(x,y)\\
& \leq
\mathcal{E}(\nu)-\mathcal{E}(\mu(t))+\int_{\Omega\times\Omega}\left(-\frac{\lamw^{-}}{2}-\frac{\lamv}{2}+\frac{\|v(t)\|_{L^{\infty}(\Omega)}}
{2\eta}\right)|x-y|^{2} d\gamma_{t}(x,y)\\
& \leq
\mathcal{E}(\nu)-\mathcal{E}(\mu(t))+\left(-\frac{\lamw^{-}}{2}-\frac{\lamv}{2}+\frac{\|\nabla
W\|_{L^{\infty}(\omo)}+\|\nabla
V\|_{L^{\infty}(\Omega)}}{2\eta}\right)d_{W}^{2}\left(\mu(t),\nu\right).
\end{align*}
On the other hand, if $\mu^{1}(\tacka)$ satisfies (\ref{EVI3}) and
$\mu^{2}(\tacka)$ is the solution to (\ref{conequ}) such that
$\mu^{1}_0=\mu^{2}_0$ ,by Lemma 4.3.4 from \cite{AGS} we get
\begin{equation*}
\frac{1}{2}\frac{d}{dt}d_{W}^{2}\left(\mu^{1}(t),\mu^{2}(t)\right)
\leq \left( -\lamw^{-}-\lamv +\frac{\|\nabla
W\|_{L^{\infty}(\omo)}+\|\nabla
V\|_{L^{\infty}(\Omega)}}{\eta}\right)d_{W}^{2}\left(\mu^{1}(t),\mu^{2}(t)\right).
\end{equation*}
Again by Gronwall's inequality we have $\mu^{1}(t)=\mu^{2}(t)$ for
all $t\geq 0$. Thus the weak measure solution is characterized by
the system of evolution variational inequalities (\ref{EVI3}).
\end{proof}

\section{Existence and stability of solutions with $\Omega$
unbounded: Global case} \label{sec:unbddglob} 

In this section we
prove the existence and stability of (\ref{conequ}) with $\Omega$
unbounded, convex and $W,V$ satisfying (GA1)-(GA4).\\

For any initial data $\mu_{0}\in \mathcal{P}_{2}(\Omega)$ and fixed $x_0\in\Omega$, denote
$B_{n}(x_0)=\{x\in\mathbb{R}^{d}:|x-x_0|<n\}$, we
can take $\mu^{n}_{0}=\sum_{i=1}^{k(n)}m^{n}_{j}\delta_{x^{n}_{i}}$ for
$x^{n}_{i}\in \Omega\cap \overline{B_{n}(x_0)}$ and $\lim_{n\to\infty} d_{W}\left(\mu^{n}_{0},\mu_0\right)=0$. To see that, note
$\int_{\Omega}|x-x_0|^{2}d\mu_{0}(x)<\infty$, thus $\lim_{n\to\infty}\int_{\Omega\setminus B_{n}(x_0)}|x-x_0|^{2}d\mu_{0}(x)=0$.
For $\mu_{0}\lfloor_{\Omega\cap\overline{B_{n}(x_0)}}$, we can find $\tilde \mu^{n}_{0}$ composed of delta measures
with the same total mass as $\mu_{0}\lfloor_{\Omega\cap\overline{B_{n}(x_0)}}$, such that
$\supp(\mu^{n}_{0})\subset \Omega\cap \overline{B_{n}(x_0)}$ and
$\lim_{n\to\infty}d_{W}\left(\mu_{0}\lfloor_{\Omega\cap\overline{B_{n}(x_0)}},\tilde \mu^{n}_{0}\right)=0$. Then
$\mu^{n}_{0}=\tilde \mu^{n}_{0}+\left(1-\mu_{0}\left(\Omega\cap \overline{B_{n}(x_0)}\right)\right)\delta_{x_0}$ satisfies
the required conditions. Without loss of generality, we assume that $x_0=0\in\Omega$ and denote $B(n)=B_{n}(0)$.

As in Section \ref{sec:bddomain}, we first show the convergence of  $\mu^{n}(\tacka)$ as $n\to\infty$.
\begin{prop}\label{contrprop2}
Assume that $\Omega$ is unbounded and convex, $W, V$ satisfy
(GA1)-(GA4). Then for two solutions $\mu^{m}(\tacka),\mu^{n}(\tacka)$ to the discrete system with
different initial data $\mu^{m}_{0},\mu^{n}_{0}$, we have for all
$t\geq 0$
\begin{equation}\label{contra6}
d_{W}\left(\mu^{n}(t),\mu^{m}(t)\right)\leq \exp\left(-\left(\lamw^{-}+\lamv\right)
t\right) d_{W}\left(\mu^{n}_{0},\mu^{m}_{0}\right).
\end{equation}
\end{prop}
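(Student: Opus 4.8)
The plan is to run the argument of Proposition~\ref{contrprop} essentially verbatim, the only change being that the prox-regularity error term \eqref{errorest} is now replaced by its $\eta=\infty$ limit, which has a favorable sign and hence disappears. Fix $m,n\in\mathbb{N}$ and recall that, by Theorem~\ref{existode}, $\mu^{n}(\tacka)$ and $\mu^{m}(\tacka)$ are the (finite particle) solutions of the projected discrete systems; in particular each is a locally absolutely continuous curve in $\mathcal{P}_{2}(\Omega)$ solving $\partial_{t}\mu^{n}+\dive(\mu^{n}P_{x}(v^{n}))=0$ with $v^{n}(t,x)=-\nabla W*\mu^{n}(t)(x)-\nabla V(x)$, and analogously for $m$. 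Since both curves are locally absolutely continuous, $t\mapsto d_{W}^{2}(\mu^{n}(t),\mu^{m}(t))$ is locally absolutely continuous, and by Theorem~8.4.7 and Lemma~4.3.4 of \cite{AGS}, for a.e.\ $t>0$ and any $\gamma_{t}\in\Gamma_{o}(\mu^{n}(t),\mu^{m}(t))$,
\begin{equation*}
\frac{1}{2}\frac{d}{dt}d_{W}^{2}\left(\mu^{n}(t),\mu^{m}(t)\right)=\int_{\Omega\times\Omega}\left\langle P_{x}(v^{n}(t,x))-P_{y}(v^{m}(t,y)),x-y\right\rangle d\gamma_{t}(x,y).
\end{equation*}

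Next I split the integrand as $P_{x}(v^{n})-P_{y}(v^{m})=(v^{n}-v^{m})+\bigl((P_{x}(v^{n})-v^{n})-(P_{y}(v^{m})-v^{m})\bigr)$. For the first piece, the symmetrization of the interaction term together with (GA1) (that $W$ is $\lamw$-geodesically convex on $\Conv(\omo)=\omo$, using that $\Omega$ convex forces all particle differences $x_{i}-x_{j}$ to lie in $\omo$) and (GA3) (that $V$ is $\lamv$-geodesically convex on $\Omega$) gives, exactly as in Proposition~\ref{contrprop},
\begin{equation*}
\int_{\Omega\times\Omega}\left\langle v^{n}(t,x)-v^{m}(t,y),x-y\right\rangle d\gamma_{t}(x,y)\le\left(-\lamw^{-}-\lamv\right)d_{W}^{2}\left(\mu^{n}(t),\mu^{m}(t)\right),
\end{equation*}
the passage from $\lamw$ to $\lamw^{-}$ being the elementary estimate $\int\!\!\int|(x-y)-(z-w)|^{2}\,d\gamma_{t}d\gamma_{t}\le 2\,d_{W}^{2}$ combined with the nonnegativity of that double integral. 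For the remaining two pieces I use that a convex $\Omega$ is $\infty$-prox-regular: for $x\in\partial\Omega$ and $\xi\in N(\Omega,x)$ one has $\langle\xi,z-x\rangle\le 0$ for all $z\in\Omega$ (this is \eqref{proxequiv} with $\eta=\infty$, equivalently the defining property of the normal cone of a convex set). By Proposition~\ref{condecomp}, $v^{n}(t,x)-P_{x}(v^{n}(t,x))\in N(\Omega,x)$ and $v^{m}(t,y)-P_{y}(v^{m}(t,y))\in N(\Omega,y)$, while $y\in\Omega$ and $x\in\Omega$; hence $\langle P_{x}(v^{n})-v^{n},x-y\rangle=\langle v^{n}-P_{x}(v^{n}),y-x\rangle\le 0$ and likewise $\langle P_{y}(v^{m})-v^{m},y-x\rangle\le 0$, so the whole boundary contribution is $\le 0$. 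Combining the two estimates yields
\begin{equation*}
\frac{1}{2}\frac{d}{dt}d_{W}^{2}\left(\mu^{n}(t),\mu^{m}(t)\right)\le\left(-\lamw^{-}-\lamv\right)d_{W}^{2}\left(\mu^{n}(t),\mu^{m}(t)\right)\qquad\text{for a.e. }t>0,
\end{equation*}
and Gronwall's inequality gives \eqref{contra6}.

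The only point that differs from the bounded case is integrability and the legitimacy of the AGS differentiation formula when $\nabla W,\nabla V$ merely have linear growth (GA2), (GA4); I expect this to be routine rather than a genuine obstacle. For each fixed $n$ the measure $\mu^{n}(t)$ is supported on finitely many points $x_{1}^{n}(t),\dots,x_{k(n)}^{n}(t)$ which, by the ODE \eqref{ode_sys3} and the linear growth of the field, remain bounded on every compact time interval (a Gronwall estimate on $\max_{i}|x_{i}^{n}(t)|$, using $|P_{x}(v)|\le|v|$ and $|v_{i}^{n}|\le C(1+\max_{k}|x_{k}^{n}|)$). Consequently $v^{n}(t,\cdot)$ is bounded on a neighborhood of $\supp\mu^{n}(t)$ and all the integrals above are finite sums, so the derivative formula of \cite{AGS} applies (alternatively one differentiates $d_{W}^{2}(\mu^{n}(t),\mu^{m}(t))$ directly along the finite coupling induced by $\gamma_{t}$). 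In short, the write-up is that of Proposition~\ref{contrprop} with the $\|\nabla W\|_{L^{\infty}}/\eta$ and $\|\nabla V\|_{L^{\infty}}/\eta$ terms deleted, convexity having forced the boundary error to be nonpositive.
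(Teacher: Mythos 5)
Your proposal is correct and follows essentially the same route as the paper: the authors likewise reduce to the argument of Proposition~\ref{contrprop} and observe that $\infty$-prox-regularity (convexity) makes the boundary term $\left\langle P_{x}\left(v^{n}(t,x)\right)-v^{n}(t,x),x-y\right\rangle$ nonpositive, so the $\eta$-dependent error disappears and Gronwall gives \eqref{contra6}. Your added remark on the boundedness of the discrete supports on compact time intervals is a detail the paper defers to Lemma~\ref{lem:suppgrowth}, but it is the same estimate.
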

\begin{proof}
The proof is similar to the proof of Proposition
\ref{contrprop} once we notice that since $\Omega$ is
$\infty$-prox-regular, by (\ref{proxequiv}) for any $x,y \in \Omega$
\begin{equation*}
\left\langle
P_{x}\left(v^{n}(t,x)\right)-v^{n}(t,x),x-y\right\rangle \leq 0.
\end{equation*}
\end{proof}
So as $n\to\infty$ we again know that $\mu^{n}(t)$ converges to some
$\mu(t)\in\left(\mathcal{P}_{2}\left(\Omega\right),d_{W}\right)$.
Before proving that $\mu(t)$ is a curve of maximal slope, we need
\begin{prop}\label{lscwv}
Let $\mu_{n},\mu\in \mathcal{P}_{2}(\Omega)$ be such that
$\lim_{n\to\infty} d_{W}\left(\mu_{n},\mu\right)=0$ then
\begin{equation*}
\lim_{n\to\infty} \mathcal{V}(\mu_{n})=\mathcal{V}(\mu),
\end{equation*}
and
\begin{equation*}
\lim_{n\to\infty} \mathcal{W}(\mu_{n})=\mathcal{W}(\mu).
\end{equation*}
\end{prop}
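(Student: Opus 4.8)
The plan is to use the linear growth of $\nabla V$ and $\nabla W$ from (GA4) and (GA2) to reduce each of the two convergences to an estimate along an optimal transport plan, split by Cauchy--Schwarz into a factor equal to $d_{W}(\mu_{n},\mu)$ (hence vanishing) and a second--moment factor bounded uniformly in $n$. First I would record two elementary facts. Integrating $\nabla V$ along segments and using (GA4) gives, for all $x,y\in\mathbb R^{d}$,
\[
|V(x)-V(y)|\le C\,(1+|x|+|y|)\,|x-y|,
\]
and similarly (GA2) gives $|W(a)-W(b)|\le C\,(1+|a|+|b|)\,|a-b|$ for all $a,b\in\mathbb R^{d}$ (here one uses $|(1-s)b+sa|\le|a|+|b|$). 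Second, $d_{W}$-convergence forces a uniform second--moment bound: since
\[
\Big(\int_{\Omega}|x|^{2}\,d\mu_{n}(x)\Big)^{1/2}=d_{W}(\mu_{n},\delta_{0})\le d_{W}(\mu_{n},\mu)+d_{W}(\mu,\delta_{0}),
\]
we get $M:=\sup_{n}\int_{\Omega}|x|^{2}\,d\mu_{n}<\infty$.

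For $\mathcal V$, I would pick $\gamma_{n}\in\Gamma_{o}(\mu_{n},\mu)$, so that $\mathcal V(\mu_{n})-\mathcal V(\mu)=\int_{\Omega\times\Omega}(V(x)-V(y))\,d\gamma_{n}(x,y)$. The pointwise bound and Cauchy--Schwarz then give
\[
|\mathcal V(\mu_{n})-\mathcal V(\mu)|\le C\Big(\int_{\Omega\times\Omega}(1+|x|+|y|)^{2}\,d\gamma_{n}\Big)^{1/2}\Big(\int_{\Omega\times\Omega}|x-y|^{2}\,d\gamma_{n}\Big)^{1/2}.
\]
The second factor is $d_{W}(\mu_{n},\mu)\to0$; the first is at most $C\big(1+M+\int_{\Omega}|y|^{2}\,d\mu\big)^{1/2}$, bounded in $n$. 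Hence $\mathcal V(\mu_{n})\to\mathcal V(\mu)$.

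For $\mathcal W$, using the same $\gamma_{n}$ I would observe that the product $\gamma_{n}\otimes\gamma_{n}$, viewed on $(\Omega\times\Omega)\times(\Omega\times\Omega)$ with coordinates $(x_{1},y_{1},x_{2},y_{2})$, has $(x_{1},x_{2})$-marginal $\mu_{n}\otimes\mu_{n}$ and $(y_{1},y_{2})$-marginal $\mu\otimes\mu$, so that
\[
\mathcal W(\mu_{n})-\mathcal W(\mu)=\frac12\int_{\Omega\times\Omega}\int_{\Omega\times\Omega}\big(W(x_{1}-x_{2})-W(y_{1}-y_{2})\big)\,d\gamma_{n}(x_{1},y_{1})\,d\gamma_{n}(x_{2},y_{2}).
\]
Using $|(x_{1}-x_{2})-(y_{1}-y_{2})|\le|x_{1}-y_{1}|+|x_{2}-y_{2}|$ and $1+|x_{1}-x_{2}|+|y_{1}-y_{2}|\le 1+|x_{1}|+|x_{2}|+|y_{1}|+|y_{2}|$, the pointwise bound on $W$ and Cauchy--Schwarz bound the right-hand side by a moment factor $\le C\big(1+2M+2\int_{\Omega}|y|^{2}\,d\mu\big)^{1/2}$ times
\[
\Big(\int_{\Omega\times\Omega}\int_{\Omega\times\Omega}\big(|x_{1}-y_{1}|+|x_{2}-y_{2}|\big)^{2}\,d\gamma_{n}\,d\gamma_{n}\Big)^{1/2}\le 2\,d_{W}(\mu_{n},\mu)\to0,
\]
since each of $\int|x_{1}-y_{1}|^{2}\,d\gamma_{n}$ and $\int|x_{2}-y_{2}|^{2}\,d\gamma_{n}$ equals $d_{W}^{2}(\mu_{n},\mu)$. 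Thus $\mathcal W(\mu_{n})\to\mathcal W(\mu)$.

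There is no serious obstacle; the only points requiring care are the bookkeeping in the $\mathcal W$ step (identifying the marginals of the reordered product plan $\gamma_{n}\otimes\gamma_{n}$) and checking that every moment factor is genuinely bounded in $n$, which is exactly what the triangle-inequality bound on $\int|x|^{2}\,d\mu_{n}$ supplies. Alternatively, one may simply invoke the standard equivalence that $d_{W}(\mu_{n},\mu)\to0$ is the same as narrow convergence together with convergence of second moments, and that this implies $\int\phi\,d\mu_{n}\to\int\phi\,d\mu$ for every continuous $\phi$ of at most quadratic growth, applied to $\phi=V$ on $\Omega$ and to $\phi(x,y)=W(x-y)$ on $\Omega\times\Omega$.
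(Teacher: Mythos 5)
Your proof is correct, but it takes a genuinely different route from the paper. The paper's argument is soft: it notes that (GA4) gives $|V(x)|\le C(1+|x|^2)$, applies the lower semicontinuity result (Lemma 5.1.7 of \cite{AGS}) to the two bounded-below, l.s.c.\ functions $V(x)+C|x|^2$ and $C|x|^2-V(x)$, and combines the resulting $\liminf$/$\limsup$ inequalities with the convergence of second moments that $d_W$-convergence provides; the $\mathcal W$ case is left as ``similar.'' This is essentially the ``standard equivalence'' you mention as an alternative at the end. Your main argument is instead quantitative: the Lipschitz-type bound $|V(x)-V(y)|\le C(1+|x|+|y|)|x-y|$ obtained by integrating $\nabla V$ along segments, combined with an optimal plan $\gamma_n$ and Cauchy--Schwarz, yields the explicit estimate $|\mathcal V(\mu_n)-\mathcal V(\mu)|\le C\bigl(1+M+\int|y|^2d\mu\bigr)^{1/2}d_W(\mu_n,\mu)$, and likewise for $\mathcal W$ via the reordered product plan $\gamma_n\otimes\gamma_n$ (whose marginal bookkeeping you correctly verify, and where Minkowski gives the factor $2\,d_W(\mu_n,\mu)$). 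What your approach buys is a modulus of continuity of $\mathcal V$ and $\mathcal W$ with respect to $d_W$ on sets of uniformly bounded second moment, rather than mere sequential continuity, and it handles the interaction term explicitly instead of by analogy; what the paper's approach buys is brevity and the fact that it only uses a growth bound on $V$ itself rather than on its gradient. Both arguments are valid under (GA2) and (GA4).
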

\begin{proof}
Since the arguments are similar, it is enough for us to show the
property for $\mathcal{V}$. By (GA4),
there exists a constant $C>0$ such that $|V(x)|\leq C(1+|x|^{2})$. By Lemma 5.1.7 from
\cite{AGS}, since $V(x)+C|x|^{2}$ is lower semicontinuous and bounded from below,
\begin{equation*}
\liminf_{n\to\infty}\int_{\Omega}
\left(V(x)+C|x|^{2}\right)d\mu_{n}(x)\geq \int_{\Omega}
\left(V(x)+C|x|^{2}\right)d\mu(x).
\end{equation*}
$\lim_{n\to\infty} d_{W}\left(\mu_{n},\mu\right)=0$, we know
\begin{equation*}
\lim_{n\to\infty} \int_{\Omega} |x|^{2}d\mu_{n}(x)=\int_{\Omega}
|x|^{2}d\mu(x).
\end{equation*}
Thus
\begin{equation*}
\liminf_{n\to\infty}\int_{\Omega} V(x)d\mu_{n}(x)\geq \int_{\Omega}
V(x)d\mu(x).
\end{equation*}
Similarly, the condition
$C|x|^{2}-V(x)$ is lower semicontinuous and bounded from below implies
\begin{equation*}
\limsup_{n\to\infty} \int_{\Omega} V(x)d\mu_{n}(x)\leq
\int_{\Omega}V(x)d\mu(x).
\end{equation*}
Thus \begin{equation*} \lim_{n\to\infty}
\mathcal{V}(\mu_{n})=\mathcal{V}(\mu),
\end{equation*} as claimed.
\end{proof}
We estimate the growth of support of the solutions
$\mu^{n}(\tacka)$ to (\ref{disproj3}).
\begin{lem}\label{lem:suppgrowth}
Let $\mu^{n}_{0}$ be the approximation of $\mu_0$ such that
$\supp\left(\mu^{n}_{0}\right)\subset \Omega\cap B(n)$. Then
$\supp\left(\mu^{n}(t)\right)\subset \Omega\cap B(r(t))$ for $r(t)\leq
(n+1)\exp(C t)$ for some $C=C(W,V)$ independent of $n$.
\end{lem}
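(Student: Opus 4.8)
The plan is to control the Euclidean norm $|x_i^n(t)|$ of each particle by a Gr\"onwall argument, using two facts: the metric projection onto the tangent cone does not increase length, and $\nabla W$, $\nabla V$ grow at most linearly.

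First I would recall from Theorem \ref{existode} that for each $n$ the trajectories $t\mapsto x_i^n(t)$ are locally absolutely continuous, remain in $\Omega$, and solve $\dot x_i^n(t)=P_{x_i^n(t)}\!\left(v_i^n(x(t))\right)$ for a.e.\ $t$, where $v_i^n(x(t))=-\sum_j m_j^n\nabla W\!\left(x_i^n(t)-x_j^n(t)\right)-\nabla V\!\left(x_i^n(t)\right)$. Since $T(\Omega,x)$ is a closed convex cone it contains $0$, so $P_x=P_{T(\Omega,x)}$ is the ($1$-Lipschitz) metric projection onto a convex set with $P_x(0)=0$; hence $|P_x(w)|=|P_x(w)-P_x(0)|\le|w|$ for every $w\in\mathbb{R}^d$. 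Consequently $|\dot x_i^n(t)|\le|v_i^n(x(t))|$ for a.e.\ $t$.

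Next I would set $R_n(t):=\max_{1\le i\le k(n)}|x_i^n(t)|$ and note $R_n(0)\le n$ by the construction of $\mu_0^n$. By (GA2) and (GA4) (equivalently (LA2), (LA4)) there is a constant $C_0=C_0(W,V)$ with $|\nabla W(z)|\le C_0(1+|z|)$ and $|\nabla V(x)|\le C_0(1+|x|)$; since $|x_i^n-x_j^n|\le 2R_n(t)$ and $\sum_j m_j^n=1$, this yields $|v_i^n(x(t))|\le 2C_0+3C_0 R_n(t)$. Integrating the bound on $|\dot x_i^n|$ from the previous step gives $|x_i^n(t)|\le n+\int_0^t\left(2C_0+3C_0 R_n(s)\right)ds$, and taking the maximum over $i$ yields $R_n(t)\le n+\int_0^t\left(2C_0+3C_0 R_n(s)\right)ds$. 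Working with this integral inequality (rather than differentiating the maximum $R_n$) is cleaner; Gr\"onwall's inequality then gives $R_n(t)\le(n+1)\exp(Ct)$ for a constant $C=C(W,V)$ depending only on $C_0$. Since every $x_i^n(t)\in\Omega$, we conclude $\supp(\mu^n(t))=\{x_1^n(t),\dots,x_{k(n)}^n(t)\}\subset\Omega\cap B(r(t))$ with $r(t)=(n+1)\exp(Ct)$, and $C$ is uniform in $n$.

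There is no genuine obstacle here; the only point requiring a little care is that the interaction term forces the velocity bound to involve $R_n(t)$, the maximal particle norm at time $t$, rather than $|x_i^n(t)|$ alone, which is precisely why I would close the estimate in integral form and why the resulting exponential constant is independent of $n$.
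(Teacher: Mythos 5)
Your proof is correct and follows essentially the same route as the paper's: bound $|\dot x_i^n|\le|v_i^n|$ using that the projection onto the closed convex cone $T(\Omega,x)$ (which contains $0$) is non-expansive, invoke the linear growth of $\nabla W$ and $\nabla V$, and close a Gr\"onwall estimate on the maximal particle norm, uniformly in $n$. The only difference is cosmetic: the paper differentiates $|x_i^n|^2$ at the maximizing index while you work with the integral form of the inequality for the maximum, which is a slightly tidier way to handle the fact that the maximizing index may change with $t$.
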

\begin{proof}
Define $r(t)=\sup_{i} |x^{n}_{i}(t)|$. For fixed $t>0$, assume that
$x^{n}_{i}(t)$ realizes $R(t)$ i.e., $r(t)=|x^{n}_{i}(t)|$, then
\begin{align*}
\left|\frac{d}{dt} \left|x^{n}_{i}\right|^{2}\right| & = 2
\left|\left\langle x^{n}_{i}(t),
P_{x^{n}_{i}}\left(-\sum_{j=1}^{k(n)}m_j\nabla
W(x^{n}_{i}-x^{n}_{j})-\nabla V(x^{n}_{i})\right)\right\rangle\right|\\
& \leq 2|x^{n}_{i}(t)|\left(\sum_{j=1}^{k(n)}m_j \left|\nabla
W(x^{n}_{i}(t)-x^{n}_{j}(t))\right|+\left|\nabla V(x^{n}_{i}(t))\right|\right)\\
& \leq 2|x^{n}_{i}(t)|\left(\sum_{j=1}^{k(n)}m_j
C\left(1+|x^{n}_{i}(t)+|x^{n}_{j}(t)|\right)+C\left(1+|x^{n}_{i}(t)|\right)\right)\\
& \leq C\left(1+\left|x^{n}_{i}(t)\right|^{2}\right).
\end{align*}
Thus $$r(t)\leq r(0)\exp(C t)+\exp(C t)-1$$ for $r(0)\leq n$ and $C$ depending only on
$W,V$, in particular independent of the number of particles $k(n)$.
\end{proof}

We can now show
\begin{thm}\label{thm:maxslope2}
Assume $\Omega$ is unbounded and convex, $W, V$ satisfy (GA1)-(GA4),
then $\mu(\tacka)$ satisfies for any $0\leq s<t<\infty$
\begin{equation}\label{maxslope4}
\mathcal{E}\left(\mu(s)\right)\geq
\mathcal{E}\left(\mu(t)\right)+\frac{1}{2}\int_{s}^{t}|\mu'|^{2}(r)dr+\frac{1}{2}\int_{s}^{t}\int_{\Omega}|P_{x}\left(v(r,x)\right)|^{2}d\mu(r,x)dr,
\end{equation}
where $v(r,x)=-\int_{\Omega}\nabla W(x-y)d\mu(r,y)-\nabla V(x)$.
\end{thm}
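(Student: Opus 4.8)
The plan is to transcribe the proof of Theorem~\ref{maxslope}, passing to the limit in the discrete energy dissipation inequality, and replacing each appeal to the boundedness of $\Omega$ by the tools available in the unbounded convex setting. First, for each fixed $n$ and each $T>0$, Lemma~\ref{lem:suppgrowth} confines the particles $x^{n}_{i}(t)$, $t\in[0,T]$, to a fixed ball; since $W,V\in C^{1}(\mathbb{R}^{d})$ and each $t\mapsto x^{n}_{i}(t)$ is locally absolutely continuous, the chain of estimates \eqref{absoE}, with $\|\nabla V\|_{L^{\infty}(\Conv(\Omega))}$ and $\|\nabla W\|_{L^{\infty}(\Conv(\omo))}$ replaced by the (finite) suprema over the relevant compact sets, shows $t\mapsto\mathcal{E}(\mu^{n}(t))$ is locally absolutely continuous. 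As in the bounded case one then computes $\frac{d}{dt}\mathcal{E}(\mu^{n}(t))=-\int_{\Omega}|P_{x}(v^{n}(t,x))|^{2}\,d\mu^{n}(t,x)$ together with $|(\mu^{n})'|^{2}(t)\le\int_{\Omega}|P_{x}(v^{n}(t,x))|^{2}\,d\mu^{n}(t,x)$ for a.e.\ $t$, which integrate to the discrete inequality \eqref{dismaxslope}.

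Next I would pass to the limit $n\to\infty$ term by term. By Proposition~\ref{contrprop2}, $d_{W}(\mu^{n}(r),\mu(r))\to0$ for every $r\ge0$, and combined with Proposition~\ref{lscwv} this gives $\mathcal{E}(\mu^{n}(r))\to\mathcal{E}(\mu(r))$. For the slope term one needs the analogue of Lemma~\ref{lscslope} valid on unbounded $\Omega$: its hypothesis $\sup_{n}\int_{\Omega}|x|^{2}\,d\mu^{n}(r)<\infty$ now follows from the $W_{2}$-convergence of $\mu^{n}(r)$ (convergence in $d_{W}$ implies convergence of second moments), and the proof then runs as before --- $\nabla W*\mu^{n}(r)$ converges weakly by the argument of Lemma~2.7 in~\cite{CDFLS} (using the linear growth (GA2)), and writing $|P_{x}(v)|^{2}=\sup_{i}\{a_{i}(x)+b_{i}(x)\cdot v\}$ with $a_{i},b_{i}$ bounded continuous (Proposition~\ref{lscproj} together with Proposition~6.42 in~\cite{FL}), the affine minorants $a_{i}+b_{i}\cdot v^{n}$ are of at most linear growth by (GA2) and (GA4), hence pass to the limit against the $W_{2}$-convergent $\mu^{n}(r)$. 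This yields, for each $r$,
\begin{equation*}
\liminf_{n\to\infty}\int_{\Omega}|P_{x}(v^{n}(r,x))|^{2}\,d\mu^{n}(r,x)\ge\int_{\Omega}|P_{x}(v(r,x))|^{2}\,d\mu(r,x),
\end{equation*}
and Fatou's lemma promotes this to the time-integrated inequality. The lower semicontinuity of the metric-derivative term, $\liminf_{n}\int_{s}^{t}|(\mu^{n})'|^{2}\ge\int_{s}^{t}|\mu'|^{2}$, carries over verbatim from Theorem~\ref{maxslope}: the uniform $L^{2}([s,t])$-bound on $|(\mu^{n})'|$ needed there follows from \eqref{dismaxslope} together with the convergences $\mathcal{E}(\mu^{n}(s))\to\mathcal{E}(\mu(s))$ and $\mathcal{E}(\mu^{n}(t))\to\mathcal{E}(\mu(t))$ just established; then $|(\mu^{n})'|\rightharpoonup A$ weakly in $L^{2}([s,t])$ with $|\mu'|\le A$ a.e.\ by lower semicontinuity of length. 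Letting $n\to\infty$ in \eqref{dismaxslope} gives \eqref{maxslope4}.

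The main obstacle is precisely the bundle of uniform-in-$n$ controls and limit passages that in the bounded case were immediate from $\|\nabla W\|_{L^{\infty}(\omo)}+\|\nabla V\|_{L^{\infty}(\Omega)}<\infty$: here neither the velocity fields $v^{n}$ nor the energies $\mathcal{E}(\mu^{n}(t))$ are a priori uniformly bounded, so one must instead combine the linear growth assumptions (GA2), (GA4) with the $W_{2}$-convergence of $\mu^{n}$ at fixed times (for the second-moment and energy control) and with the support growth Lemma~\ref{lem:suppgrowth} (for the absolute continuity of $t\mapsto\mathcal{E}(\mu^{n}(t))$ on finite time intervals). Once the unbounded version of Lemma~\ref{lscslope} is secured, the remainder is a routine transcription of the bounded-case argument, and the same scheme will also deliver the energy identity and the metric-slope identity asserted in Theorem~\ref{globexis} via the chain-rule argument of the proof of Theorem~\ref{exgrad}.
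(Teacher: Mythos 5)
Your proposal is correct and follows essentially the same route as the paper: the paper likewise uses Lemma \ref{lem:suppgrowth} to confine each discrete solution to a ball (depending on $n$) so that the argument of \eqref{absoE} gives local absolute continuity of $t\mapsto\mathcal{E}(\mu^{n}(t))$, invokes Proposition \ref{lscwv} for convergence of the energies, and then declares the rest identical to the proof of Theorem \ref{maxslope}. You merely make explicit the details the paper leaves implicit — that the second-moment hypothesis of Lemma \ref{lscslope} and the passage to the limit in its affine minorants are supplied by the linear growth assumptions (GA2), (GA4) together with $W_{2}$-convergence, and that the uniform $L^{2}$ bound on $|(\mu^{n})'|$ comes from \eqref{dismaxslope} and the energy convergence.
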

\begin{proof}
We first check that for fixed $n\in \mathbb{N}$, the function $t\mapsto \mathcal{E}(\mu^{n}(t))$ is locally absolutely
continuous. For fixed $0\leq s<t<\infty$, by Lemma \ref{lem:suppgrowth}, $\|\nabla V(x)\|_{L^{\infty}(\Omega\cap B(r(t)))}<\infty$ and
$\|\nabla W\|_{L^{\infty}(\Omega\cap B(r(t))-\Omega\cap B(r(t))}<\infty$. Then by the same argument as in (\ref{absoE}), $t\mapsto\mathcal{E}(\mu(t))$ is
locally absolutely continuous. Together with Proposition \ref{lscwv}, the proof is now identical to the
proof of Theorem \ref{dismaxslope}. We omit it here.
\end{proof}
We proceed to the proof of Theorem \ref{globexis}
\begin{proof}[Proof of Theorem \ref{globexis}] Let $\tilde v$ be the
tangential velocity field for $\mu(\tacka)$, i.e. $\mu(\tacka)$ satisfies
(\ref{conequ831}) and $\|\tilde v(t)\|_{L^{2}(\mu(t))}=|\mu'|(t)$.
Similar arguments as in the proof of Theorem \ref{exgrad} still
gives that for any $\mu,\nu\in \mathcal{P}_{2}(\Omega)$
\begin{equation*}
\mathcal{E}(\nu)-\mathcal{E}(\mu)\geq
\int_{\Omega\times\Omega}\left\langle \nabla W*\mu(x_2)+\nabla
V(x_2),y_2-x_2\right\rangle
d\gamma(x_2,y_2)+o\left(d_{W}(\mu,\nu\right),
\end{equation*}
and for a.e. $t>0$
\begin{equation*}
\frac{d}{dt}
\mathcal{E}\left(\mu(t)\right)=\int_{\Omega}\left\langle \nabla
W*\mu(t)(x)+\nabla V(x),\tilde v(t,x)\right\rangle d\mu(t,x).
\end{equation*}
Now since $\Omega$ is convex, we have $\langle v_{N}(t,x),y-x\rangle
\leq 0$, thus
\begin{equation*}
\mathcal{E}(\nu)-\mathcal{E}(\mu)\geq \int_{\Omega\times \Omega}
\left\langle -P_{x}\left(-\nabla W*\mu(t)(x)-\nabla V(x)\right),
y-x\right\rangle d\gamma(x,y),
\end{equation*}
and
\begin{equation*}
\lim_{h\to 0^{+}}\int_{\Omega\times\Omega}\left\langle
v_{N}(t,x),\frac{y-x}{h}\right\rangle d\gamma_{t}^{h}(x,y)\leq 0.
\end{equation*}
So we have $-P\left(v(t)\right)=-P\left(-\nabla W*\mu(t)-\nabla
V\right)\in \partial\mathcal{E}\left(\mu(t)\right)$ and
\begin{equation*}
\int_{\Omega}\langle v_{N}(t,x),\tilde v(t,x)\rangle d\mu(t,x)\leq
0.
\end{equation*}
Thus
\begin{equation*}
\frac{d}{dt} \mathcal{E}\left(\mu(t)\right)\geq
-\int_{\Omega}\left\langle P_{x}\left(-\nabla W*\mu(t)(x)-\nabla
V(x)\right),\tilde v(t,x)\right\rangle d\mu(t,x),
\end{equation*}
which together with Theorem \ref{maxslope2} implies for a.e. $t>0$
\begin{equation}
\tilde v(t,x)= P_{x}\left(v(t,x)\right)= P_{x}\left(-\nabla
 W*\mu(t)(x)-\nabla V(x)\right)\in -\partial\mathcal{E}\left(\mu(t)\right),
\end{equation}
\begin{equation}
|\mu'|^{2}(t)=\int_{\Omega}\left|P_{x}\left(-\nabla
 W*\mu(t)(x)-\nabla V(x)\right)\right|^{2}d\mu(t,x)
\end{equation}
and for any $0\leq s\leq t<\infty$
\begin{equation}
\mathcal{E}(\mu(s))=
\mathcal{E}(\mu(t))+\int_{s}^{t}\int_{\Omega}\left|P_{x}\left(-\nabla
 W*\mu(r)(x)-\nabla V(x)\right)\right|^{2}d\mu(r,x)dr.
\end{equation}
Thus $\mu(\tacka)$ is a gradient flow with respect to $\mathcal{E}$ and
by (\ref{conequ831}), a weak measure solution to (\ref{conequ}).\\
For the stability result (\ref{globcontra}), we only need to notice
that for any two solutions $\mu^{1}(\tacka),\mu^{2}(\tacka)$ to
(\ref{conequ}), since $\Omega$ is convex, $\left\langle
v^{i}(t,x)-P_{x}\left(v^{i}(t,x)\right),y-x\right\rangle \leq 0$ for
$i=1,2$. Thus
\begin{align*}
\frac{1}{2}\frac{d}{dt}d_{W}^{2}\left(\mu^{1}(t),\mu^{2}(t)\right) &
=\int_{\Omega\times\Omega}\langle
P_{x}\left(v^{1}(t,x)\right)-P_{y}\left(v^{2}(t,y)\right),x-y\rangle
d\gamma_{t}(x,y)\\ & = \int_{\Omega\times\Omega} \langle
v^{1}(t,x)-v^{2}(t,y),x-y\rangle d\gamma_{t}(x,y)\\ &
+\int_{\Omega\times\Omega}\langle
P_{x}\left(v^{1}(t,x)\right)-v^{1}(t,x)-P_{y}\left(v^{2}(t,y)\right)+v^{2}(t,y),x-y\rangle
d\gamma_{t}(x,y)\\ & \leq -\left(\lamw^{-}+\lamv\right)
d_{W}^{2}\left(\mu^{1}(t),\mu^{2}(t)\right).
\end{align*}
Then by Gronwall's inequality we get (\ref{globcontra}).\\ For
evolution variational inequalities (\ref{EVI4}), if $\mu(\tacka)$ is a
solution to (\ref{conequ}) then for any
$\nu\in\mathcal{P}_{2}(\Omega)$ and $\gamma\in
\Gamma_{o}\left(\mu(t),\nu\right)$ an optimal plan
\begin{align*}
\frac{1}{2}\frac{d}{dt}d_{W}^{2}\left(\mu(t),\nu\right) & =
\int_{\Omega\times\Omega}\left\langle
P_{x}\left(v(t,x)\right),x-y\right\rangle d\gamma_{t}(x,y)\\ & =
\int_{\Omega\times\Omega}\left(\langle v(t,x),x-y\rangle+\langle
P_{x}\left(v(t,x)\right)-v(t,x),x-y\rangle\right)d\gamma_{t}(x,y)\\
& \leq
\mathcal{E}(\nu)-\mathcal{E}(\mu(t))-\int_{\Omega\times\Omega}\left(\frac{\lamw^{-}}{2}+\frac{\lamv}{2}\right) |x-y|^{2}d\gamma_{t}(x,y)\\
& \leq \mathcal{E}(\nu)-\mathcal{E}(\mu(t))-\left(\frac{\lamw^{-}}{2}+\frac{\lamv}{2}\right)
d_{W}^{2}\left(\mu(t),\nu\right).
\end{align*}
On the other hand, if $\mu^{1}(\tacka)$ satisfies (\ref{EVI4}) and
$\mu^{2}(\tacka)$ is the solution to (\ref{conequ}) such that
$\mu^{1}_0=\mu^{2}_0$ we know that for any $\nu\in
\mathcal{P}_{2}(\Omega)$ and $i=1,2$
\begin{equation*}
\frac{1}{2}\frac{d}{dt}d_{W}^{2}\left(\mu^{i}(t),\nu\right)+\left(\frac{\lamw^{-}}{2}+\frac{\lamv}{2}\right)
d_{W}^{2}\left(\mu^{i}(t),\nu\right)\leq
\mathcal{E}(\nu)-\mathcal{E}\left(\mu^{i}(t)\right).
\end{equation*}
By Lemma 4.3.4 from \cite{AGS} we then have
\begin{equation*}
\frac{1}{2}\frac{d}{dt}d_{W}^{2}\left(\mu^{1}(t),\mu^{2}(t)\right)
\leq  -\left(\lamw^{-}+\lamv\right) d_{W}^{2}\left(\mu^{1}(t),\mu^{2}(t)\right).
\end{equation*}
So by Gronwall's inequality we have $\mu^{1}(t)=\mu^{2}(t)$ for all
$t\geq 0$. Thus the weak measure solution to (\ref{conequ}) is
characterized by the system of evolution variational inequalities
(\ref{EVI4}).
\end{proof}

\section{Existence and stability of solutions with $\Omega$ unbounded: Compactly supported initial data case} \label{sec:unbddomain}
In this section, we show the existence and stability results in the
case when $\Omega$ is unbounded and $W,V$ satisfy (LA1)-(LA4).
The novelty is that $\lambda$-geodesic convexity of energy is only assumed locally (which is automatically satisfied if $V$ and $W$ are $C^2$ functions).

We start by giving the control the support of the solutions
$\mu^{n}(t)$ to (\ref{disproj3}). Notice that when approximating
$\mu_0$ by $\mu^{n}_{0}=\sum_{i=1}^{k(n)}
m^{n}_{i}\delta_{x^{n}_{i}}$, since $\supp(\mu_0)\subset \Omega\cap B(r_0)$, we
can take $x^{n}_{i}\in \Omega\cap B(r_0+1)$ for all $n\in\mathbb{N}$ and $1\leq
i\leq k(n)$ such that we have
$$\lim_{n\to\infty} d_{W}\left(\mu^{n}_{0},\mu_{0}\right)=0.$$ So
without loss of generality, we assume
$\supp\left(\mu^{n}_{0}\right)\subset B(r_0)$ for all $n\in\mathbb{N}$. Then by
Lemma \ref{lem:suppgrowth}, $\supp\left(\mu^{n}(t)\right)\subset \Omega\cap B(r(t))$ for $r(t)\leq
(r_0+1)\exp(C t)$ for some $C=C(W,V)$ independent of $n$.

\begin{prop}
There exists a locally absolutely continuous curve $\mu(\tacka)$
 in $\mathcal{P}_{2}(\Omega)$ such that $\mu^{n}(t)$ converges
 to $\mu(t)$ in $\mathcal{P}_{2}(\Omega)$ for any $0\leq t<\infty$.
\end{prop}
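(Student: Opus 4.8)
The plan is to carry out, on each fixed interval $[0,T]$, the stability computation of Proposition~\ref{contrprop}; the only new feature is that $W$ and $V$ are now merely \emph{locally} $\lambda$-convex, and this is handled by Lemma~\ref{lem:suppgrowth}, which confines all particle trajectories, uniformly in $n$, to a fixed ball on which a compact convex set from the definition of local convexity supplies the needed convexity constants. Concretely, fix $T>0$; since $r(\tacka)$ is nondecreasing, Lemma~\ref{lem:suppgrowth} gives $\supp(\mu^{n}(t))\subset\Omega\cap B(r(T))$ for all $n\in\mathbb{N}$ and all $t\in[0,T]$, with $r(T)\le(r_0+1)\exp(CT)$ independent of $n$. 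Choose $k\in\mathbb{N}$ with $B(2r(T))\subset K_k$ and put $\Omega_k=\Omega\cap K_k$; then $W$ and $V$ are $\lambda_{W,k}$- and $\lambda_{V,k}$-geodesically convex on $K_k$, and $\|\nabla W\|_{L^{\infty}(\omok)}$, $\|\nabla V\|_{L^{\infty}(\Omega_k)}$ are finite because $\nabla W,\nabla V$ are continuous.

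For two discrete solutions $\mu^{n}(\tacka),\mu^{m}(\tacka)$ I would then repeat the proof of Proposition~\ref{contrprop} essentially verbatim: taking $\gamma_t\in\Gamma_{o}(\mu^{n}(t),\mu^{m}(t))$ and differentiating $d_{W}^{2}$ via Theorem~8.4.7 and Lemma~4.3.4 of \cite{AGS}, the monotonicity coming from $\lambda$-convexity of $W$ and $V$ is applicable since every point $x^{n}_{i}(t)-x^{n}_{j}(t)$ and $x^{n}_{i}(t)$ occurring in that estimate lies in $B(2r(T))\subset K_k$, while the prox-regularity inequality \eqref{proxequiv} holds globally. This yields, for all $t\in[0,T]$,
\[
d_{W}\!\left(\mu^{n}(t),\mu^{m}(t)\right)\le\exp\!\left(\!\left(-\lambda_{W,k}^{-}-\lambda_{V,k}+\frac{\|\nabla W\|_{L^{\infty}(\omok)}+\|\nabla V\|_{L^{\infty}(\Omega_k)}}{\eta}\right)t\right)d_{W}\!\left(\mu^{n}_{0},\mu^{m}_{0}\right).
\]
Since $d_{W}(\mu^{n}_{0},\mu_0)\to0$, the sequence $\{\mu^{n}_{0}\}_n$ is Cauchy, hence $\{\mu^{n}(t)\}_n$ is $d_{W}$-Cauchy uniformly on $[0,T]$; by completeness of $(\mathcal{P}_{2}(\Omega),d_{W})$ it converges to some $\mu(t)$, supported in the closed set $\Omega\cap B(r(T))$ and therefore belonging to $\mathcal{P}_{2}(\Omega)$. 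As $T$ is arbitrary and a convergent sequence has a unique limit, this defines a single curve $\mu(\tacka)$ on $[0,\infty)$ with $\mu^{n}(t)\to\mu(t)$ for every $t$.

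Finally, for local absolute continuity I would use a uniform velocity bound on $[0,T]$: for $x,y\in\supp(\mu^{n}(t))$ one has $x-y\in B(2r(T))\subset\Omega_k-\Omega_k$, so $|P_{x}(v^{n}(t,x))|\le|v^{n}(t,x)|\le\|\nabla W\|_{L^{\infty}(\omok)}+\|\nabla V\|_{L^{\infty}(\Omega_k)}=:L$; hence $|(\mu^{n})'|(t)\le L$ and $d_{W}(\mu^{n}(s),\mu^{n}(t))\le L|t-s|$ for $s,t\in[0,T]$, and passing to the limit $n\to\infty$ gives $d_{W}(\mu(s),\mu(t))\le L|t-s|$, so $\mu(\tacka)$ is locally Lipschitz, a fortiori locally absolutely continuous. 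The only genuine obstacle is the merely local convexity already noted, and it is dispatched precisely by the uniform support bound of Lemma~\ref{lem:suppgrowth}; everything else is a routine repetition of the bounded-domain arguments of Section~\ref{sec:bddomain}.
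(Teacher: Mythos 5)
Your proposal is correct and follows essentially the same route as the paper: a fixed time horizon $T$, the uniform-in-$n$ support confinement from Lemma \ref{lem:suppgrowth}, a choice of $K_k \supset B(2r(T))$ to supply local convexity constants, and then the Gronwall stability estimate of Proposition \ref{contrprop} with the localized constants, yielding a Cauchy sequence in $(\mathcal{P}_2(\Omega), d_W)$. Your explicit uniform-Lipschitz argument for the local absolute continuity of the limit curve is a small addition the paper leaves implicit, but it is consistent with how the paper obtains that property elsewhere.
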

\begin{proof}
For any fixed $0<T<\infty$ and any $0\leq t\leq T$, we know that
$\supp(\mu^{n}(t))\subset B(r(T))$ for all $0\leq t\leq T$ uniformly
in $n$. Let $K_{k}$ and $\lambda_{W,k}, \lambda_{V,k}$ be the sequences of compact
convex sets and constants such that $W, V$ are
$\lambda_{W,k}$ and $\lambda_{V,k}$-geodesically convex on $K_{k}$. Take $k_0$ be such that
$B(2r(T))\subset K_{k}$ for all $k\geq k_{0}$. Still denote
$\gamma_{t}\in\Gamma_{o}\left(\mu^{n}(t),\mu^{m}(t)\right)$ an optimal plan. Now
notice that $\supp(\mu^{n}(t)),\supp(\mu^{m}(t))\subset
B(r(t))\cap\Omega\subset K_{k}\cap\Omega=\Omega_{k}$, thus
\begin{equation*}
\int_{\Omega\times\Omega}\left\langle
v^{n}(t,x)-v^{m}(t,y),x-y\right\rangle d\gamma_{t}(x,y)\leq
\int_{\Omega\times\Omega} \left(\lambda_{W,k}^{-}+\lambda_{V,k}\right)|x-y|^{2},
\end{equation*}
and
\begin{flalign*}
& \int_{\Omega\times\Omega}\left\langle
P_{x}\left(v^{n}(t,x)\right)-v^{n}(t,x)-P_{y}\left(v^{m}(t,y)\right)+v^{m}(t,y),x-y\right\rangle
d\gamma_{t}(x,y)\\ & \leq
\int_{\Omega\times\Omega}\left(\frac{\|v^{n}(t)\|_{L^{\infty}(\Omega_{t})}+\|v^{m}(t)\|_{L^{\infty}(\Omega_{t})}}{2\eta}\right)
|x-y|^{2}d\gamma_{t}(x,y),
\end{flalign*}
where $\Omega_{t}=\Omega\cap B(r(t))$. Since
$v^{n}(t,x)=-\int_{\Omega}\nabla W(x-y)d\mu^{n}(t,y)-\nabla V(x)$ we
know
$$\|v^{n}(t)\|_{L^{\infty}(\Omega_{t})}\leq \|\nabla
W\|_{L^{\infty}(\Omega_{T}\!-\!\Omega_{T})}+\|\nabla
V\|_{L^{\infty}(\Omega_{T})}<\infty.$$ Thus as in the proof of
Proposition \ref{contrprop}, we have for $0\leq t\leq T$
\begin{align*}
\frac{1}{2}\frac{d}{dt} d_{W}^{2}\left(\mu^{n}(t),\mu^{m}(t)\right)
& = \int_{\Omega}\left\langle
P_{x}\left(v^{n}(t,x)\right)-P_{y}\left(v^{m}(t,y)\right),x-y\right\rangle d\gamma_{t}(x,y)\\
& =\int_{\Omega\times\Omega}\left\langle
v^{n}(t,x)-v^{m}(t,y),x-y\right\rangle d\gamma_{t}(x,y)\\ & +
\int_{\Omega\times\Omega}\left\langle
P_{x}\left(v^{n}(t,x)\right)-v^{n}(t,x)-P_{y}\left(v^{m}(t,y)\right)+v^{m}(t,y),x-y\right\rangle
d\gamma_{t}(x,y)\\ & \leq\left( -\lambda_{W,k}^{-}-\lambda_{V,k} +\frac{\|\nabla
W\|_{L^{\infty}(\Omega_{T}\!-\!\Omega_{T})}+\|\nabla
V\|_{L^{\infty}(\Omega_{T})}}{\eta}\right)d_{W}^{2}\left(\mu^{n}(t),\mu^{m}(t)\right).
\end{align*}
By Gronwall's inequality, we have for all $0\leq t\leq T$
\begin{equation*}
d_{W}\left(\mu^{n}(t),\mu^{m}(t)\right)\leq \exp\left(\left(
-\lambda_{W,k}^{-}-\lambda_{V,k} +\frac{\|\nabla
W\|_{L^{\infty}(\Omega_{T}\!-\!\Omega_{T})}+\|\nabla
V\|_{L^{\infty}(\Omega_{T})}}{\eta}\right)t\right)d_{W}(\mu^{n}_{0},\mu^{m}_{0}).
\end{equation*}
Thus as $n\to \infty$, $\mu^{n}(t)$ converges in
$\mathcal{P}_{2}(\Omega)$ to some $\mu(t)$.
\end{proof}
\begin{thm}
$\mu(\tacka)$ is a curve of maximal slope, for any $0\leq s<t<\infty$
\begin{equation}\label{maxslope2}
\mathcal{E}\left(\mu(s)\right)\geq
\mathcal{E}\left(\mu(t)\right)+\frac{1}{2}\int_{s}^{t}|\mu'|^{2}(r)dr+\frac{1}{2}\int_{s}^{t}\int_{\Omega}|P_{x}\left(v(r,x)\right)|^{2}d\mu(r,x)dr,
\end{equation}
where $v(r,x)=-\int_{\Omega}\nabla W(x-y)d\mu(r,y)-\nabla V(x)$.
\end{thm}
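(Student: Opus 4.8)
The plan is to mimic the proof of Theorem~\ref{maxslope}, replacing the boundedness of $\Omega$ used there by the uniform-in-$n$, locally-in-time control on the supports of the $\mu^{n}(\tacka)$ supplied by Lemma~\ref{lem:suppgrowth}. Fix $0\le s<t<\infty$ and put $T=t$. By Lemma~\ref{lem:suppgrowth} there is $R=r(T)<\infty$, independent of $n$, with $\supp(\mu^{n}(r))\subset\Omega_T:=\Omega\cap B(R)$ for every $r\in[0,T]$ and every $n$; since $W,V\in C^{1}(\mathbb{R}^{d})$ this makes $\|\nabla W\|_{L^{\infty}(\Omega_T-\Omega_T)}$ and $\|\nabla V\|_{L^{\infty}(\Omega_T)}$ finite. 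These quantities replace $\|\nabla W\|_{L^{\infty}(\omo)}$ and $\|\nabla V\|_{L^{\infty}(\Omega)}$ throughout.

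First I would record the discrete energy identity. Differentiating $\mathcal{E}(\mu^{n}(r))=\tfrac12\sum_{i,j}m^{n}_{i}m^{n}_{j}W(x^{n}_{i}-x^{n}_{j})+\sum_{i}m^{n}_{i}V(x^{n}_{i})$ along the ODE system \eqref{ode_sys3}, using $\dot x^{n}_{i}=P_{x^{n}_{i}}(v^{n}_{i})$ and the orthogonal decomposition of Proposition~\ref{condecomp} (so that $\langle v^{n}_{i}-P_{x^{n}_{i}}(v^{n}_{i}),\dot x^{n}_{i}\rangle=0$), gives $\frac{d}{dr}\mathcal{E}(\mu^{n}(r))=-\int_{\Omega}|P_{x}(v^{n}(r,x))|^{2}d\mu^{n}(r,x)$ and $|(\mu^{n})'|^{2}(r)\le\int_{\Omega}|P_{x}(v^{n}(r,x))|^{2}d\mu^{n}(r,x)$ for a.e.\ $r$; local absolute continuity of $r\mapsto\mathcal{E}(\mu^{n}(r))$ follows from \eqref{absoE} with the $L^{\infty}$ norms now taken over $\Omega_T-\Omega_T$ and $\Omega_T$. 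Integrating on $[s,t]$ yields the discrete maximal-slope inequality \eqref{dismaxslope}; moreover $\tfrac12\int_{s}^{t}|(\mu^{n})'|^{2}(r)dr\le\mathcal{E}(\mu^{n}(s))-\mathcal{E}(\mu^{n}(t))$ is bounded uniformly in $n$, since $\mathcal{E}$ is continuous and all supports lie in the fixed compact $\Omega_T$.

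The limit $n\to\infty$ is then taken term by term. Because the supports sit in the fixed compact $\Omega_T$ and $d_{W}(\mu^{n}(r),\mu(r))\to0$, continuity of $W$ and $V$ gives $\mathcal{E}(\mu^{n}(r))\to\mathcal{E}(\mu(r))$ for every $r\in[s,t]$. Lemma~\ref{lscslope} localizes to $\Omega_T$: the map $(x,v)\mapsto|P_{x}(v)|^{2}$ is lower semicontinuous on $\Omega_T\times\mathbb{R}^{d}$ and convex in $v$ by Proposition~\ref{lscproj}, hence by Proposition~6.42 of \cite{FL} it is a countable supremum $\sup_{i}(a_{i}(x)+b_{i}(x)\cdot v)$ with $a_{i},b_{i}$ bounded continuous on $\Omega_T$; combined with $\nabla W*\mu^{n}(r)\rightharpoonup\nabla W*\mu(r)$ (the argument of Lemma~2.7 of \cite{CDFLS}, valid since the supports are uniformly bounded) this yields $\int_{\Omega}|P_{x}(v(r,x))|^{2}d\mu(r,x)\le\liminf_{n}\int_{\Omega}|P_{x}(v^{n}(r,x))|^{2}d\mu^{n}(r,x)$ for each $r$, and Fatou's lemma lifts it to the integral over $[s,t]$. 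For the metric term the argument in Theorem~\ref{maxslope} carries over verbatim: $|(\mu^{n})'|$ is bounded in $L^{2}([s,t])$, hence $|(\mu^{n})'|\rightharpoonup A$ weakly along a subsequence, and $d_{W}(\mu(S),\mu(T))\le\int_{S}^{T}A(r)dr$ forces $|\mu'|\le A$ a.e., so $\int_{s}^{t}|\mu'|^{2}(r)dr\le\liminf_{n}\int_{s}^{t}|(\mu^{n})'|^{2}(r)dr$. Passing to the limit in \eqref{dismaxslope} produces \eqref{maxslope2}.

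The main obstacle, such as it is, is precisely the adaptation of Lemma~\ref{lscslope} (stated for bounded $\Omega$) to the present case: one must verify that everything in its proof only involves $x$ ranging over the fixed compact $\Omega_T$, namely the weak convergence $\nabla W*\mu^{n}(r)\rightharpoonup\nabla W*\mu(r)$ tested against $C^{0}_{b}$ functions and the sup-representation of $|P_{x}(\tacka)|^{2}$; both are fine on $\Omega_T$. With this localization in place the remaining estimates are word-for-word those in the proof of Theorem~\ref{maxslope}, so I would present the argument by saying it is identical to that proof once $\|\nabla W\|_{L^{\infty}(\omo)}$ and $\|\nabla V\|_{L^{\infty}(\Omega)}$ are replaced by $\|\nabla W\|_{L^{\infty}(\Omega_T-\Omega_T)}$ and $\|\nabla V\|_{L^{\infty}(\Omega_T)}$, and omit the repetition.
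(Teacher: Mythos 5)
Your proposal is correct and follows essentially the same route as the paper: localize via Lemma \ref{lem:suppgrowth} to a fixed compact $\Omega\cap B(r(T))$ so the $L^{\infty}$ bounds on $\nabla W$ and $\nabla V$ survive, establish the discrete maximal-slope inequality, and pass to the limit using the energy convergence, Lemma \ref{lscslope} with Fatou, and the weak-$L^{2}$ lower semicontinuity of the metric derivative exactly as in Theorem \ref{maxslope}. The only cosmetic difference is that you re-derive a localized version of Lemma \ref{lscslope}, whereas the paper invokes it directly (its hypotheses — (M1), narrow convergence, and uniformly bounded second moments — already hold here without modification).
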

\begin{proof}
We use similar argument as in Theorem \ref{maxslope} and Theorem \ref{thm:maxslope2}. For any fixed $n\in\mathbb{N}$,
since $\supp(\mu^{n}(t))\subset \Omega\cap B(r(t))$, we can still control the $L^{\infty}$-norm of $\nabla V$ and $\nabla W$.
 Then the same argument as in the proof of
Theorem \ref{thm:maxslope2} shows that $t\mapsto \mathcal{E}\left(\mu(t)\right)$ is locally absolutely continuous.
Thus the fact that $\mu^{n}$ are solutions to the discrete
systems implies,
\begin{equation}\label{dismaxslope1}
\mathcal{E}\left(\mu^{n}(s)\right)\geq
\mathcal{E}\left(\mu^{n}(t)\right)+\frac{1}{2}\int_{s}^{t}|\left(\mu^{n}\right)'|^{2}(r)dr+
\frac{1}{2}\int_{s}^{t}\int_{\Omega}|P_{x}\left(v^{n}(r,x)\right)|^{2}d\mu^{n}(r,x)dr.
\end{equation}
Note that $W, V\in C^{1}(\mathbb{R}^{d})$ and
$\lim_{n\to\infty}d_{W}\left(\mu^{n}(r),\mu(r)\right)=0$ with
$\supp(\mu^{n}(r))\subset \Omega\cap B(r(T)$ for any $0\leq r<t\leq
T$, we get
\begin{equation*}
\lim_{n\to\infty}\mathcal{E}(\mu^{n}(r))=\mathcal{E}(\mu(r)).
\end{equation*}
By Lemma \ref{lscslope} and notice that $\nabla W*\mu^{n}(r)+\nabla
V$ still converges weakly to $\nabla W*\mu(r)+\nabla V$ for any
$0\leq r\leq T$, then
\begin{equation*}
\liminf_{n\to\infty}
\int_{\Omega}|P_{x}\left(v^{n}(r,x)\right)|^{2}d\mu^{n}(r,x)\geq
\int_{\Omega}|P_{x}\left(v(r,x)\right)|^{2}d\mu(r,x).
\end{equation*}
By Fatou's lemma,
\begin{equation*}
\liminf_{n\to\infty}
\int_{s}^{t}\int_{\Omega}|P_{x}\left(v^{n}(r,x)\right)|^{2}d\mu^{n}(r,x)dr\geq
\int_{s}^{t}\int_{\Omega}|P_{x}\left(v(r,x)\right)|^{2}d\mu(r,x)dr.
\end{equation*}
Now by the same argument as in the proof of (\ref{lscmetricslope}),
we again obtain
\begin{equation*}
\liminf_{n\to\infty}\int_{s}^{t}|\left(\mu^{n}\right)'|^{2}(r)dr\geq
\int_{s}^{t}|\mu'|^{2}(r)dr.
\end{equation*}
Take $n\to\infty$ in (\ref{dismaxslope1}) gives
\begin{equation*}
\mathcal{E}\left(\mu(s)\right)\geq
\mathcal{E}\left(\mu(t)\right)+\frac{1}{2}\int_{s}^{t}|\mu'|^{2}(r)dr+\frac{1}{2}\int_{s}^{t}\int_{\Omega}|P_{x}\left(v(r,x)\right)|^{2}d\mu(r,x)dr.
\end{equation*}
\end{proof}

We now start to prove Theorem \ref{locexistence}
\begin{proof}[Proof of Theorem \ref{locexistence}]
Since $\mu(\tacka)$ is locally absolutely continuous, we know that there
exists a unique Borel vector field $\tilde v$ such that
\begin{equation*}
\partial_{t}\mu(t)+\dive\left(\mu(t)\tilde v(t)\right)=0
\end{equation*}
holds in the sense of distributions. For a fixed $T>0$ and any
$\mu,\nu\in\mathcal{P}_{2}(\Omega)$ with
$\supp(\mu),\supp(\nu)\subset B(r(T))$, let
$\gamma\in\Gamma_{o}(\mu,\nu)$. Since $W, V$ are
$\lambda_{W,k}$ and $\lambda_{V,k}$-geodesically convex on $K_{k}\supset B(r(t))\cap\Omega$,
we have that the function $f$ we defined in (\ref{fnonincrease}) by
taking $\lambda=\lambda_{k}$ is non-decreasing in $t$ for any
$(x_1,y_1),(x_2,y_2)\in \supp\gamma$. Thus we still have
\begin{equation*}
\mathcal{E}(\nu)-\mathcal{E}(\mu)\geq
\int_{\Omega\times\Omega}\langle \nabla W*\mu(x_2)+\nabla
V(x_2),y_2-x_2\rangle d\gamma(x_2,y_2).
\end{equation*}
For any $0<t<T$, and $h>0$ such that $t-h\geq 0,t+h\leq T$, we take
$\mu=\mu(t),\nu=\mu(t+h)$ to get
\begin{equation*}
\lim_{h\to 0^{+}}\frac{\mathcal{E}(\mu(t+h))-\mathcal{E}(\mu(t))}{h}
\geq \int_{\Omega}\langle \nabla W*\mu(t)(x)+\nabla V(x),\tilde
v(t,x)\rangle d\mu(t,x)
\end{equation*}
Again take $\mu=\mu(t),\nu=\mu(t-h)$ gives
\begin{equation*}
\lim_{h\to
0^{+}}\frac{\mathcal{E}(\mu(t))-\mathcal{E}(\mu(t-h))}{h}\leq
\int_{\Omega} \langle \nabla W*\mu(t)(x)+\nabla V(x),\tilde
v(t,x)d\mu(t,x).
\end{equation*}
Also $\mathcal{E}(\mu(t))$ is locally absolutely continuous, so for
a.e. $t>0$
\begin{equation*}
\frac{d}{dt}\mathcal{E}(\mu(t))=\int_{\Omega}\langle \nabla
W*\mu(t)(x)+\nabla V(x),\tilde v(t,x)\rangle d\mu(t,x),
\end{equation*}
which again implies
\begin{equation*}
\frac{d}{dt}\mathcal{E}(\mu(t))\geq -\int_{\Omega}\langle
P_{x}\left(-\nabla W*\mu(t)(x)-\nabla V(x)\right),\tilde
v(t,x)\rangle d\mu(t,x).
\end{equation*}
Combine with (\ref{maxslope2}) yields
\begin{equation*}
\tilde v(t,x)= P_{x}\left(-\nabla W*\mu(t)(x)-\nabla V(x)\right),
\end{equation*}
and for any $0\leq s\leq t<\infty$
\begin{equation*}
\mathcal{E}(\mu(s))=
\mathcal{E}(\mu(t))+\int_{s}^{t}\int_{\Omega}\left|P_{x}\left(-\nabla
 W*\mu(r)(x)-\nabla V(x)\right)\right|^{2}d\mu(r,x)dr.
\end{equation*}
For the contraction property (\ref{localcontra}), we notice that for
any $0\leq t\leq T<\infty$ and $k\in\mathbb{N}$ such that
$B(r(T))\subset K_{k}$
\begin{equation*}
\frac{1}{2}\frac{d}{dt}d_{W}^{2}\left(\mu^{1}(t),\mu^{2}(t)\right)
\leq\left( -\lambda_{W,k}^{-}-\lambda_{V,k} +\frac{\|\nabla
W\|_{L^{\infty}(\omok)}+\|\nabla
V\|_{L^{\infty}(\Omega_{k})}}{\eta}\right)d_{W}^{2}\left(\mu^{1}(t),\mu^{2}(t)\right).
\end{equation*}
where $\Omega_{k}=\Omega\cap K_{k}$. Thus by Gronwall's inequality,
we have for all $0\leq t\leq T$
\begin{equation*}
d_{W}\left(\mu^{1}(t),\mu^{2}(t)\right)\leq \exp\left(\left(
-\lambda_{W,k}^{-}-\lambda_{V,k} +\frac{\|\nabla
W\|_{L^{\infty}(\omok)}+\|\nabla
V\|_{L^{\infty}(\Omega_{k})}}{\eta}\right)t\right)d_{W}(\mu^{1}_{0},\mu^{2}_{0}).
\end{equation*}
\end{proof}

\begin{remk}\label{timedep}
When the external and interaction potentials are time-dependent $V=V(t,x), W=W(t,x)$, then
with some modifications of the arguments we have before, we can still show the existence and stability results
of the solutions to (\ref{conequ}) in all the three different cases as in the time-independent settings before.
For example, we assume that there are constants $\lambda\in \mathbb{R}, \eta >0$ and
a positive function $\beta\in L^{1}([0,\infty))$ such that
\begin{itemize}
\item [\textbf{(M1)}] $\Omega$ is bounded and $\eta$-prox-regular.
\item [\textbf{(TA1)}] $W\in C^{1}([0,\infty)\times \mathbb{R}^{d})$ is $\lambda$-geodesically convex on $\Conv\left(\omo\right)$
uniformly in $t$.
\item [\textbf{(TA2)}] $V\in C^{1}([0,\infty)\times \mathbb{R}^{d})$ is $\lambda$-geodesically convex on $\Conv\left(\Omega\right)$
uniformly in $t$.
\item [\textbf{(TA3)}] $|\nabla V(t,x)|\leq \beta(t)(1+|x|)$ and $|\nabla W(t,x)| \leq \beta(t)(1+|x|)$ for all $x\in \mathbb{R}^{d}$.
\item [\textbf{(TA4)}] $|\frac{\partial V}{\partial t}(t,x)|\leq \beta(t)(1+|x|^{2})$ and $|\frac{\partial W}{\partial t}(t,x)|\leq \beta(t)(1+|x|^{2})$ for all $x\in \mathbb{R}^{d}$.
\end{itemize}
Then we can show the existence of a weak
measure solution $\mu(\tacka)$ to (\ref{conequ}) satisfying (\ref{metslo}), (\ref{enedis}) and stability estimate
\begin{equation}\label{dissta}
d_{W}(\mu^{1}(t),\mu^{2}(t))\leq \exp\left(-2\lambda t+\frac{C(\Omega)}{\eta}\int_{0}^{t}\beta(s) ds\right) d_{W}\left(\mu^{1}_{0},\mu^{2}_{0}\right),
\end{equation}
where $C(\Omega)=\sup_{x\in\Omega}\dist(x,0)$.
We sketch the proof and concentrate on the differences.
Approximate the initial data $\mu_0$ by a sequence of particle measures $\mu^{n}_0$ as before.
Note that we can still show the existence of solutions to the projected ODE system by citing Theorem 5.1 from \cite{ET2}.
Thus for total energy defined as $\mathcal{E}(t,\mu)=\frac{1}{2}\int_{\Omega\times\Omega} W(t,x-y)d\mu(x)d\mu(y)+\int_{\Omega} V(t,x)d\mu(x)$, we have the following energy dissipation along the solutions $\mu^{n}(\tacka)$,
\begin{equation}
\begin{split}
\mathcal{E}(s,\mu^{n}(s)) & \geq \mathcal{E}(t,\mu^{n}(t))-
\frac{1}{2}\int_{s}^{t}\int_{\Omega\times\Omega} \frac{\partial W}{\partial r}(r,x-y)d\mu(r,x)d\mu(r,y)d r\\
&  -\int_{s}^{t}\int_{\Omega} \frac{\partial V}{\partial r} (r,x)d\mu(r,x)dr + \frac{1}{2}\left|\left(\mu^{n}\right)'\right|^{2}+\frac{1}{2}\int_{\Omega} \left|P_{x}(v^{n}(t,x))\right|^{2}d\mu^{n}(t,x).
\end{split}
\label{enedis4}
\end{equation}

Similar stability argument as before shows that the sequence $\{\mu^{n}(\tacka)\}_{n}$ satisfies the stability estimate (\ref{dissta}).
Thus we know $\mu^{n}(\tacka)$ converges in $d_{W}$ to a locally absolutely curve $\mu(\tacka)$ and $\mu(\tacka)$ satisfies
the same energy dissipation (\ref{enedis4}) by similar lower semicontinuity arguments.
By the $\lambda$-geodesic convexity and $C^1$ regularity of $W$ and $V$, we can then show the following chain rule for $\mu(\tacka)$,
\begin{equation}
\begin{split}
\frac{d}{d t}\mathcal{E}(t,\mu(t)) & \geq \frac{1}{2}\int_{\Omega\times\Omega} \frac{\partial W}{\partial t}(t,x-y)d\mu(t,x)d\mu(t,y)
+ \int_{\Omega} \frac{\partial V}{\partial t}(t,x) d\mu(t,x)\\
& - \int_{\Omega} \left\langle P_{x}\left(v(t,x)\right),\tilde v(t,x)\right\rangle d\mu(t,x).
\end{split}
\label{chainrule5}
\end{equation}
Combining (\ref{enedis4}) with (\ref{chainrule5}), we show that $\mu(\tacka)$ is a weak measure solution to (\ref{conequ}) satisfying (\ref{metslo}) and
(\ref{enedis}). Then (\ref{dissta}) comes from the stability argument of the time-independent setting.
\end{remk}

\section{Aggregation on nonconvex domains}\label{sec:aggre}
In this section, we consider the following question: what are the conditions on $\Omega$ to ensure the existence of an interaction potential $W$ such that the 
solution $\mu(\tacka)$ to (\ref{conequ}) aggregates to a singleton (delta mass) as time goes to infinity? 

Let $\Omega$ be bounded and $\eta$-prox-regular, $V\equiv 0$ and $W$ satisfy (A1) for some $\lambda_{W}>0$, such that Theorem \ref{exgrad} holds and we have a weak measure solution $\mu(\tacka)$ to (\ref{conequ}).  We recall $\Xi=\{\delta_{x}: x\in\mathbb{R}^{d}\}$ the set of singletons, and start to estimate the evolution of $d_{W}(\mu(\tacka),\Xi)$, the distance of $\mu(\tacka)$ to $\Xi$. That is we prove Proposition \ref{single_contr}.

\begin{proof}
It suffices to show that for all $t \geq 0$
\[ \frac{1}{2}\frac{d^{+}}{dt}d_{W}^{2}\left(\mu(t),\Xi\right)\leq \left(-\lambda_{W}+\frac{\|\nabla
W\|_{L^{\infty}(\omo)}}{2\eta}\right)d_{W}^{2}\left(\mu(t),\Xi) \right. \]
since then by Gronwall's inequality the result follows.

By shifting time we can assume that $t=0$.
 Denote the center of mass for $\mu_0$ by $\bar x$, that is $\bar x=\int_{\Omega} x d\mu(0,x)$. It is direct computation to show that $d_{W}(\mu(0),\Xi)=d_{W}\left(\mu(0),\delta_{\bar x}\right),$ and for any $t>0$, $d_{W}(\mu(t),\Xi)\leq d_{W}\left(\mu(t),\delta_{\bar x}\right)$. Thus
\begin{align*}
\frac{1}{2}\frac{d^{+}}{dt}\bigg|_{t=0}d_{W}^{2}\left(\mu(t),\Xi\right) & \leq \frac{1}{2}\frac{d^{+}}{dt} \bigg|_{0} d_{W}^{2}\left(\mu(t),\delta_{\bar x}\right)\\ & =
\int_{\Omega}\left\langle P_{x}\left(v(0,x)\right),x-\bar x\right\rangle d\mu(0,x)\\ & = 
\int_{\Omega}\left( \left\langle v(0,x),x-\bar x\right\rangle+\left\langle P_{x}\left(v(0,x)\right)-v(0,x),x-\bar x\right\rangle \right)d\mu(0,x) . 
\end{align*}
Now we follow similar argument as in the proof of Proposition \ref{contrprop}. To be precise, by (\ref{WConv}) with $\mu^n(t) = \mu(t), \mu^m(t)\equiv \delta_{\bar x}$, we have 
\begin{align*}
\int_{\Omega}\left\langle v(0,x),x-\bar x\right\rangle d\mu(0,x) & \leq 
-\frac{\lambda_{W}}{2}\int_{\Omega\times \Omega} |x-y|^{2}d\mu(0,x)d\mu(0,y)\\   
& = -\lambda_{W}\int_{\Omega}|x-\bar x|^{2}d\mu(0,x) \\
& = -\lambda_{W} d_{W}^{2}\left(\mu(0),\delta_{\bar x}\right),
\end{align*}
where we used the fact that $\int_{\Omega} (x-\bar x)d\mu(0,x)=0$ for the definition of center of mass.

Also by (\ref{errorest}) with $\mu^n(t)=\mu(t), \mu^m(t)\equiv \delta_{\bar x}$,
\begin{equation*}
 \int_{\Omega\times\Omega} \langle
P_{x}\left(v(0,x)\right)-v(0,x),x-\bar x\rangle
d\mu(0,x)\leq \frac{\|\nabla
W\|_{L^{\infty}(\omo)}}{2\eta}d_{W}^{2}\left(\mu(0),\delta_{\bar x}\right).
\end{equation*}
Combine the estimates yields 
\begin{align*}
\frac{1}{2}\frac{d^{+}}{dt}\bigg|_{t=0}d_{W}^{2}\left(\mu(t),\Xi\right) & \leq \left(-\lambda_{W}+\frac{\|\nabla
W\|_{L^{\infty}(\omo)}}{2\eta}\right)d_{W}^{2}\left(\mu(0),\delta_{\bar x}\right)\\
& = \left(-\lambda_{W}+\frac{\|\nabla
W\|_{L^{\infty}(\omo)}}{2\eta}\right)d_{W}^{2}\left(\mu(0),\Xi \right).
\end{align*}
\end{proof}

We now prove Theorem \ref{aggrecri}.
\begin{proof}
It turns out that the quadratic interaction potential leads to the sharpest bound for general domains. Furthermore, since multiplying a potential by a positive constant only leads to a constant rescaling in time of the dynamics, we consider $W(x)=\frac{1}{2}|x|^{2}$. To verify the inequality (\ref{aggrecon}) note that $\nabla W(x)=x, \Hess W(x)=\Id$ and $\lambda_{W}=1$. Thus
$\sup_{\omo}|\nabla W|\leq \sup_{x,y\in\Omega}|x-y|=\Diam(\Omega)$ and 
$$-\lambda_{W}+\frac{\|\nabla W\|_{L^{\infty}\left(\omo\right)}}{2\eta}\leq -1+\frac{1}{2\eta}\Diam(\Omega) =: C(\Omega) <0$$
which via inequality (\ref{aggrecon}) implies the desired result.
 \end{proof}

\begin{remk}\label{sharpness}
We notice that (\ref{aggrecon}) implies that $\lim_{t\to\infty} d_{W}\left(\mu(t),\delta_{\bar x(t)}\right)=0$ where $\bar x(t)=\int_{\Omega} x d\mu(t,x)$ is the center of mass for $\mu(t)$. Hence as $t\to \infty$, $\mu(t)$ converges in $d_{W}$ to a singleton, i.e., all mass aggregates to one point to form a delta mass of size 1. Thus Theorem \ref{aggrecri} gives a sufficient condition on the shape of the domain $\Omega$ on which there exists a radially symmetric interaction potential $W$ so that solutions aggregate to a point. We note that the simple condition given in the theorem is also sharp in the following sense:  for any $\veps>0$ there exists $\Omega$ bounded and $\eta$-prox-regular with $0<\eta\leq (\frac{1}{2}-\epsilon)\Diam(\Omega)$, and an initial condition $\mu_0$ such that the solution starting from $\mu_0$ does not aggregate to a point.

Let $\Omega=\{(r\cos \theta, r\sin \theta)\in \mathbb{R}^{2}: 1-\epsilon \leq r \leq 1, -\epsilon\leq \theta\leq  \pi+\epsilon\}$ 
for $0<\epsilon<\frac{1}{2}$ be as shown in Figure \ref{fig:aggregation}. 
Let
$x^1=\left(-\left(1-\epsilon\right)\cos \epsilon, -\left(1-\epsilon\right)\sin \epsilon\right), 
x^2=\left(\left(1-\epsilon\right)\cos \epsilon, -\left(1-\epsilon\right)\sin \epsilon\right)$ and 
set $\mu_0=\frac{1}{2}\delta_{x^1}+\frac{1}{2}\delta_{x^2}$. 
Then $\Omega$ is $\eta$-prox-regular with 
$\eta= |x^1-x^2|/2 > 1-2\epsilon$. Since $\Diam(\Omega)=2$, thus $(\frac12 - 2 \veps) \Diam(\Omega) <\eta < \frac12 \Diam(\Omega)$. 
 For any radially symmetric $W$ which satisfies (A1) for some $\lambda_{W}>0$, 
a direct calculation yields that $v(0,x^1)=-\frac{1}{2}\nabla W(x^{1}-x^{2})\in N(\Omega, x^1)$. Thus $P_{x^1}\left(v(0,x^1)\right)=0$ and 
similarly $P_{x^2}\left(v(0,x^2)\right)=0$. We then see that $\mu(t)\equiv \mu_0$ is the solution to (\ref{conequ}), and hence aggregation to a singleton, (\ref{aggrecon2}), does not hold.
\begin{figure}[h]
\centering
\includegraphics[width=0.45\linewidth,angle=270]{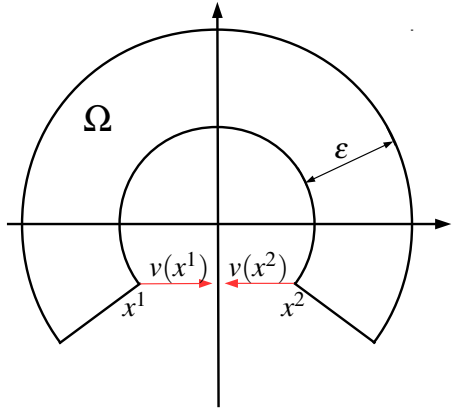}
\put(-170,-60){\Large $\Omega$}
\put(-145,-115){$v(x^1)$}
\put(-115,-115){$v(x^2)$}
\put(-75,-70){$\epsilon$}
\put(-95,-130){$x^2$}
\put(-155,-130){$x^1$}
\caption{The velocity $v$ at $x^1$ and $x^2$ are shown as the red arrows, which lie in the normal cones of the points respectively.}
\label{fig:aggregation}
\end{figure}
\end{remk}

\subsection*{Acknowledgements.}
The authors are grateful to Filippo Santambrogio for stimulating discussions and David Kinderlehrer for valuable suggestions and constant support.
DS is also grateful to NSF (grant DMS-1211760). 
LW acknowledges the support from NSF DMS 0806703.
The research was also supported by NSF PIRE grant  OISE-0967140. 
DS and LW are thankful to the Center for Nonlinear Analysis (NSF grant DMS-0635983) for its support.
JAC acknowledges support from projects MTM2011-27739-C04-02,
2009-SGR-345 from Ag\`encia de Gesti\'o d'Ajuts Universitaris i de Recerca-Generalitat 
de Catalunya, the Royal Society through a Wolfson Research Merit Award, and the Engineering 
and Physical Sciences Research Council (UK) grant number EP/K008404/1. The authors thank 
MSRI at Berkeley where part of this work was carried over during the program on Optimal Transport.
\\

\bibliography{non-convex-inter}{}
\bibliographystyle{siam}

\end{document}